\newtheorem{thm}{Theorem}[section]
\newtheorem*{thm*}{Theorem}
\newenvironment{customthm}[1]
{\innercustomthm}
{\endinnercustomthm}
\newtheorem{cor}[thm]{Corollary}
\newtheorem{prop}[thm]{Proposition}
\newtheorem{lem}[thm]{Lemma}
\theoremstyle{definition}
\newtheorem{defn}[thm]{Definition}
\newtheorem{rem}[thm]{Remark}
\newtheorem{exmp}[thm]{Example}
\newtheorem{notn}[thm]{Notation}
\newcommand{\Z}{\mathbb{Z}}
\newcommand{\Q}{\mathcal{Q}}
\newcommand{\cat}{\mathsf}
\newcommand{\sS}{\mathfrak{S}}
\newcommand{\R}{\mathbf{R}}
\newcommand{\D}{\mathcal{D}}
\newcommand{\Aut}{\operatorname{Aut}}
\newcommand{\op}{\mathrm{op}}
\newcommand{\trop}{\mathrm{trop}}
\DeclareSymbolFont{bbold}{U}{bbold}{m}{n}
\DeclareSymbolFontAlphabet{\mathbbold}{bbold}
\newcommand{\defeq}{\vcentcolon=}
\newcommand{\G}{\mathbf{G}}
\newcommand{\Hom}{\operatorname{Hom}}
\newcommand{\overbar}[1]{\mkern 1.5mu\overline{\mkern-1.5mu#1\mkern-1.5mu}\mkern 1.5mu}
\newcommand{\val}{\operatorname{val}}
\newcommand{\B}{\mathbf{B}}
\newcommand{\tor}{\mathrm{tor}}
\newcommand{\EL}{\mathrm{EL}}
\renewcommand{\O}{\mathcal{O}}
\newcommand{\V}{\mathsf{V}}
\renewcommand{\H}{\mathbf{H}}
\newcommand{\Stab}{\operatorname{Stab}}
\renewcommand{\sS}{\mathfrak{S}}
\let\c@equation\c@thm
\numberwithin{equation}{section}
\title{Symmetries of tropical moduli spaces of curves}
\author{Siddarth Kannan}
\address{Department of Mathematics, Brown University, Providence, RI 02906}
\email{\url{siddarth_kannan@brown.edu}}
\begin{document}

	\begin{abstract}
	 Put $\Delta_{g, n} \subset M_{g, n}^\trop$ for the moduli space of stable $n$-marked tropical curves of genus $g$ and volume one. We compute the automorphism group $\Aut(\Delta_{g, n})$ for all $g, n \geq 0$ such that $3g - 3 + n > 0$. In particular, we show that $\Aut(\Delta_{g})$ is trivial for $g \geq 2$, while $\Aut(\Delta_{g, n}) \cong S_n$ when $n \geq 1$ and $(g, n) \neq (0, 4), (1, 2)$. The space $\Delta_{g, n}$ is a \textsl{symmetric $\Delta$-complex} in the sense of Chan, Galatius, and Payne ~\cite{CGP1, CGP2}, and is identified with the dual intersection complex of the boundary divisor in the Deligne-Mumford-Knudsen moduli stack $\overbar{\mathcal{M}}_{g, n}$ of stable curves. After the work of Massarenti ~\cite{Mass}, who has shown that $\Aut(\overbar{\mathcal{M}}_g)$ is trivial for $g \geq 2$ while $\Aut(\overbar{\mathcal{M}}_{g, n}) \cong S_n$ when $n \geq 1$ and $2g - 2 + n \geq 3$, our result implies that the tropical moduli space $\Delta_{g, n}$ faithfully reflects the symmetries of the algebraic moduli space for general $g$ and $n$.
	\end{abstract}
	
	\maketitle
	\tableofcontents
\section{Introduction}
The moduli spaces $M_{g, n}^\trop$ of $n$-marked tropical curves of genus $g$ and their extended counterparts $\overbar{M}_{g, n}^\trop$ have been extensively studied in recent years, with particular interest in the relationship between these spaces and the Deligne-Mumford-Knudsen compactifications $\mathcal{M}_{g, n} \subset \overbar{\mathcal{M}}_{g, n}$ of the moduli stacks of algebraic curves. Abramovich, Caporaso, and Payne \cite{ACP} identified the space $\overbar{M}_{g, n}^\trop$ with the skeleton of the toroidal embedding $\mathcal{M}_{g, n} \subset \overbar{\mathcal{M}}_{g, n}$. Recently, Chan, Galatius, and Payne \cite{CGP1, CGP2} realized the subspace $\Delta_{g, n} \subset M_{g, n}^\trop$ parameterizing tropical curves of volume one as the dual complex of the boundary divisor $\overbar{\mathcal{M}}_{g, n} \smallsetminus \mathcal{M}_{g, n}$, and used this identification to study the top weight rational cohomology of $\mathcal{M}_{g, n}$ via the topology of $\Delta_{g, n}$. In this paper, we compute the automorphism groups of each of these related tropical moduli spaces for all $g, n \geq 0$ such that $3g - 3 + n > 0$. For $n \geq 1$, we set $I_n \defeq \{1, \ldots, n \}$
and put $S_n \defeq \mathrm{Perm}(I_n)$ for the group of permutations of $I_n$. We also adopt the convention that $S_0$ is the trivial group.
\begin{thm}\label{Main}
Fix integers $g, n \geq 0$ such that $3g - 3 + n > 0$. If $2g - 2 + n \geq 3$, then 
    \[\Aut(\overbar{M}_{g, n}^\trop) \cong \Aut(M_{g, n}^\trop) \cong \Aut(\Delta_{g, n}) \cong S_n. \]
If $2g - 2 + n < 3$, then $(g, n) \in \{(0, 4), (1, 1), (1, 2)\}$. In these cases we have $\Aut(\Delta_{0, 4}) \cong S_3$ while $\Aut(\Delta_{1,1})$ and $\Aut(\Delta_{1, 2})$ are both trivial.
\end{thm}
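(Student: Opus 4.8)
The plan is to reduce the statement to a combinatorial assertion about the symmetric $\Delta$-complex $\Delta_{g,n}$ and then analyze its cells directly. First I would dispose of the two ambient spaces: $M_{g,n}^\trop$ is the cone over $\Delta_{g,n}$, with a unique apex (the point with no edges of positive length, i.e.\ the single vertex of weight $g$ carrying all $n$ legs) that is fixed by every automorphism; an automorphism of $M_{g,n}^\trop$ is thus the same as one of the link $\Delta_{g,n}$ extended radially, and the canonical compactification $\overbar{M}_{g,n}^\trop$ has the same apex and link and inherits automorphisms uniquely, so $\Aut(\overbar{M}_{g,n}^\trop)\cong\Aut(M_{g,n}^\trop)\cong\Aut(\Delta_{g,n})$. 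This reduces the first assertion to showing $\Aut(\Delta_{g,n})\cong S_n$. The tautological relabeling action gives a homomorphism $\iota\colon S_n\to\Aut(\Delta_{g,n})$, which I would check is injective whenever $n\geq1$ and $(g,n)\neq(0,4),(1,2)$ by exhibiting, for each transposition $(ij)$, a stable weighted graph of type $(g,n)$ not isomorphic to its $(ij)$-relabeling (for instance a trivalent graph carrying exactly one leg at each of two inequivalent vertices) --- the failure of this for $(0,4)$ and $(1,2)$ being visible from the short list of their cells.

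For the main case $2g-2+n\geq3$ I would construct a homomorphism $\rho\colon\Aut(\Delta_{g,n})\to S_n$ inverse to $\iota$. Any $\phi\in\Aut(\Delta_{g,n})$ preserves the dimension of cells and all face relations, so it permutes the $0$-cells, which are precisely the one-edge stable weighted graphs: at most one ``loop type'' $0$-cell (a weight-$(g-1)$ vertex with a loop carrying all $n$ legs), and ``bridge type'' $0$-cells indexed by a weight split $0\leq a\leq g$ together with a leg split, modulo the evident involution. The crux is to single out, purely in terms of incidences with $1$- and $2$-cells, a family of cells canonically indexed by $I_n$, so that $\phi$ permutes them and $\rho$ results. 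When $g\geq1$ I would use the $n$ bridge $0$-cells $P_i$ consisting of a weight-$1$ vertex carrying only the leg $i$ joined to a weight-$(g-1)$ vertex carrying the other $n-1$ legs, and show these are intrinsically distinguished; when $g=0$ the $P_i$ do not exist, and I would instead work with the system of splits of $I_n$ realized by $0$-cells of $\Delta_{0,n}$, whose minimal members are the $\binom n2$ splits $\{i,j\}\mid I_n\smallsetminus\{i,j\}$, two of which are compatible (for $n\geq5$) exactly when the pairs $\{i,j\}$ are disjoint, so that $\phi$ acts on the disjointness graph of $2$-element subsets of $I_n$ --- whose automorphism group is $S_n$ for $n\geq5$ by the classical rigidity of line graphs of complete graphs, with $n=4$ the exceptional value. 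The marking-free cases $n=0$ (necessarily $g\geq2$) I would handle by a direct rigidity argument of the same flavor, with no $S_n$ present and upshot $\Aut(\Delta_g)=1$.

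It then remains to show $\rho$ is an isomorphism. Surjectivity is immediate since $\rho\circ\iota=\mathrm{id}_{S_n}$ by construction. For injectivity I must show that $\phi$ with $\rho(\phi)=e$ --- that is, $\phi$ fixing each cell of the leg-indexed family --- is the identity. Such a $\phi$ fixes every $0$-cell, each one-edge graph being pinned down by its incidences together with the leg data that $\phi$ preserves; then I would induct on dimension, using that $\phi$ commutes with all face maps and that a $p$-cell $(G,\omega)$ is recovered from its ordered tuple $\bigl(d_0(G,\omega),\dots,d_p(G,\omega)\bigr)$ of codimension-one faces, so the inductive hypothesis forces $\phi(G,\omega)=(G,\omega)$ and also pins down $\phi$ on the $S_{p+1}$-actions, whence $\phi=\mathrm{id}$. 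I expect the real obstacle to lie in the intrinsic characterization of the previous paragraph: making precise \emph{which} combinatorial feature of a cell an abstract automorphism can detect, doing so uniformly over the full range $2g-2+n\geq3$, reconciling the genus-zero argument (split systems, requiring the line-graph rigidity input) with the positive-genus argument (distinguished low-weight vertices), and handling cells whose underlying graph has automorphisms acting nontrivially on its edge set.

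Finally I would settle the exceptional cases by direct enumeration, the complexes being tiny. The space $\Delta_{0,4}$ has dimension $3g-4+n=0$ and its only cells are the three trivalent trees $12\mid34$, $13\mid24$, $14\mid23$, so $\Aut(\Delta_{0,4})$ is the symmetric group on three points, $\cong S_3$, and the map from $S_4$ is the surjection $S_4\twoheadrightarrow S_3$. The space $\Delta_{1,1}$ consists of a single cell, the genus-one loop with its leg, so $\Aut(\Delta_{1,1})$ is trivial (the order-two flip of the loop is an automorphism of that cell, not of the complex). For $\Delta_{1,2}$ one lists the two $0$-cells and two $1$-cells, checks that their incidence pattern admits no nontrivial symmetry, and observes that on each of these four cells the transposition $(1\,2)$ is realized by a graph automorphism and so acts trivially; hence $\Aut(\Delta_{1,2})$ is trivial as well.
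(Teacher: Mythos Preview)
Your reduction to $\Delta_{g,n}$ and the genus-zero argument via disjointness of $2$-element subsets are essentially what the paper does. The real problem is your inductive step for injectivity of $\rho$. You assert that ``a $p$-cell $(G,\omega)$ is recovered from its ordered tuple $\bigl(d_0(G,\omega),\dots,d_p(G,\omega)\bigr)$ of codimension-one faces,'' and you flag the \emph{characterization} paragraph as the likely obstacle. In fact it is this reconstruction claim that carries almost all the weight, and it is false at the level of generality you need. The paper devotes its longest section to proving a restricted version (Theorem~\ref{Reconstruct}): an edge-labelled stable pair $(\G,\tau)$ with $b^1(\G)=g$ and $|V(\G)|\geq 3$ is determined by its deck of nonloop one-edge contractions. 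The hypotheses are not decorative: the paper exhibits explicit non-isomorphic edge-labelled graphs with identical contraction decks once either the stability condition or the full-genus condition $b^1(\G)=g$ is dropped. Your induction on dimension would have to reconstruct \emph{every} $p$-cell, including those with positive vertex weights (hence $b^1(\G)<g$), from its faces, and this is exactly where the counterexamples live.

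The paper's fix is structural rather than cosmetic: instead of inducting on dimension, it filters $\Delta_{g,n}$ by the subcomplexes $\V^i_{g,n}$ of graphs with at most $i$ vertices, proves this filtration is $\Aut(\Delta_{g,n})$-invariant (which already requires work: one must show that automorphisms preserve loop indices, bridges, and hence first Betti number), and observes that each $\V^i_{g,n}$ is pure with top cells given precisely by the full-genus graphs on $i$ vertices. The reconstruction theorem then applies to exactly these top cells, and the induction runs on $i$ rather than on $p$. Even with the correct hypotheses in place, the reconstruction itself is delicate for graphs with three or four vertices and involves a substantial case analysis (Lemmas~\ref{GenericCapping}--\ref{ExceptionalCalculations}); it is not a one-line consequence of commuting with face maps. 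A smaller point: your count for $\Delta_{1,2}$ is off---there are three $1$-cells, not two---though the conclusion that $\Aut(\Delta_{1,2})$ is trivial survives.
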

The automorphism groups of $\Delta_{g, n}$, $M_{g, n}^\trop$, and $\overbar{{M}}_{g, n}^\trop$ are taken in the categories of \textsl{symmetric $\Delta$-complexes}, \textsl{generalized cone complexes}, and \textsl{extended generalized cone complexes}, respectively. The case $g = 0$ in Theorem \ref{Main} is due to Abreu and Pacini \cite{AbreuPacini}, and our work specializes to give a new proof of their result. With respect to the algebraic moduli spaces, Massarenti \cite{Mass} has shown that the automorphism group of $\overbar{\mathcal{M}}_{g, n}$ is isomorphic to $S_n$ for all $g \geq 0$ and $n \geq 1$ such that $2g - 2 + n \geq 3$, while $\Aut(\overbar{\mathcal{M}}_g)$ is trivial for $g \geq 2$. The automorphism group of $\overline{\mathcal{M}}_{0, n}$ was also computed independently by Bruno and Mella in ~\cite{BM}. Thus Theorem \ref{Main} is yet another example of how the combinatorics of the skeleta $\overbar{M}_{g, n}^\trop$ reflect the geometry of the moduli stacks $\overbar{\mathcal{M}}_{g, n}$ in a meaningful way.
\subsection{Tropical moduli spaces} The space $M_{g, n}^\trop$ parameterizes $n$-marked tropical curves of genus $g$. Such a curve is a pair $(\G, \ell)$, where $\G$ is a stable $n$-marked graph of genus $g$ (see Section \ref{Background} for a formal definition) and $\ell: E(\G) \to \mathbb{R}_{> 0}$ is a function assigning positive real lengths to the edges of $\G$. Thus the building blocks of $M_{g, n}^\trop$ are quotient spaces of cones by automorphism groups of graphs, i.e. cells of the form
\[C(\G) = \mathbb{R}_{\geq 0}^{E(\G)}/\Aut(\G). \]
It is in this way that $M_{g, n}^\trop$ is given the structure of a \textsl{generalized cone complex}. The tropical moduli space admits a compactification
\[M_{g, n}^\trop  \subset \overbar{M}_{g, n}^\trop \] by an \textsl{extended} generalized cone complex, which is obtained by allowing edge lengths of tropical curves to become infinite. See \cite{BMV, Caporaso} for the construction of $M_{g, n}^\trop$, \cite{ACP} for basics on (extended) generalized cone complexes, and \cite{cavalieri2017moduli} for foundations on tropical moduli problems. The third combinatorial moduli space in which we are interested is the subspace $\Delta_{g, n} \subset M_{g, n}^\trop$ parameterizing tropical curves of volume one, where the \textsl{volume} of a tropical curve $(\G, \ell)$ is defined to be
\[\mathrm{vol}(\G, \ell) \defeq \sum_{e \in E(\G)}\ell(e). \]
Besides its role as a moduli space, $\Delta_{g, n}$ is also identified with the dual complex of the normal crossings boundary divisor $\overbar{\mathcal{M}}_{g, n} \smallsetminus \mathcal{M}_{g, n}$. The dual complex of a simple normal crossings divisor $D$, first studied by Danilov \cite{Danilov}, is a $\Delta$-complex which encodes the combinatorial data of the components of $D$ and their intersections. The construction of dual complexes can been extended to the generality of normal crossings divisors on Deligne-Mumford stacks ~\cite{ACP, CGP1, Harper}. The resulting combinatorial objects are generalizations of $\Delta$-complexes called \textsl{symmetric} $\Delta$-complexes. The building blocks of a symmetric $\Delta$-complex are quotient spaces of simplices by subgroups of the symmetric group. In \cite{CGP1}, it is proven that the category of symmetric $\Delta$-complexes is equivalent to the category of \textsl{smooth} generalized cone complexes, as defined in \cite{ACP}. This fact leads us to the following lemma, which allows us to focus our attention on calculating $\Aut(\Delta_{g, n})$.
\begin{lem}\label{EquivalenceOfAutomorphisms}
	For all $g, n \geq 0$ with $3g - 3 + n > 0$, there are canonical isomorphisms
	\[ \Aut(\overbar{M}_{g, n}^\trop)\cong \Aut(M_{g, n}^\trop)\cong\Aut(\Delta_{g, n}). \]
\end{lem}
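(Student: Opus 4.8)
The plan is to obtain the two isomorphisms from two different functorial correspondences and then compose. For $\Aut(M_{g, n}^\trop) \cong \Aut(\Delta_{g, n})$ I would invoke the equivalence of categories between smooth generalized cone complexes and symmetric $\Delta$-complexes established in \cite{CGP1}. One first checks that $M_{g, n}^\trop$, with its cone cells $C(\G) = \mathbb{R}_{\geq 0}^{E(\G)}/\Aut(\G)$ equipped with the integral structures given by the lattices $\Z^{E(\G)}$, is a smooth generalized cone complex, and that under the cited equivalence it corresponds to the symmetric $\Delta$-complex $\Delta_{g, n}$; concretely, $M_{g, n}^\trop$ is the cone over $\Delta_{g, n}$, the latter being realized as the link of the cone point, i.e.\ the volume-one locus. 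Since an equivalence of categories restricts to a group isomorphism between the automorphism groups of corresponding objects, this yields a canonical isomorphism $\Aut(M_{g, n}^\trop) \cong \Aut(\Delta_{g, n})$.

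For $\Aut(\overbar{M}_{g, n}^\trop) \cong \Aut(M_{g, n}^\trop)$ I would use the canonical compactification of \cite{ACP}: the space $\overbar{M}_{g, n}^\trop$ is obtained from $M_{g, n}^\trop$ by replacing each cone $\mathbb{R}_{\geq 0}^{E(\G)}$ with its extension $[0, \infty]^{E(\G)}$ and forming the colimit over the same diagram of stable graphs and edge contractions. This passage is functorial — in particular an isomorphism of $M_{g, n}^\trop$ induces an isomorphism of $\overbar{M}_{g, n}^\trop$ — so it defines a homomorphism $\Aut(M_{g, n}^\trop) \to \Aut(\overbar{M}_{g, n}^\trop)$, which is injective because $M_{g, n}^\trop$ is dense in $\overbar{M}_{g, n}^\trop$ and an automorphism is determined by its restriction to a dense subset. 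For surjectivity, I would argue that every automorphism of $\overbar{M}_{g, n}^\trop$ preserves the open dense subspace $M_{g, n}^\trop$ — intrinsically, $M_{g, n}^\trop$ is the union of the relative interiors of the finite cells $C(\G)$, equivalently the largest generalized cone complex contained in $\overbar{M}_{g, n}^\trop$ — and hence restricts to an automorphism of $M_{g, n}^\trop$ whose compactification is the given automorphism.

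Most of the work is bookkeeping: unwinding the definitions of morphisms in the three ambient categories from \cite{ACP} and \cite{CGP1} so that the two correspondences above are genuinely bijective on automorphism groups, and checking that the relevant structures (integral structure, cell structure, the cone point) are respected. The one step with real content, and the main obstacle I anticipate, is the claim that an automorphism of $\overbar{M}_{g, n}^\trop$ cannot mix \emph{finite} and \emph{infinite} tropical curves; I would establish this by combining the facts that automorphisms preserve the stratification by cells and respect the monoid (equivalently integral) structure on each extended cone, which together single out the finite locus $M_{g, n}^\trop$ inside $\overbar{M}_{g, n}^\trop$. Composing the two canonical isomorphisms then gives the lemma.
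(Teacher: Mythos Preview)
Your proposal is correct and follows essentially the same route as the paper: the first isomorphism is obtained exactly as you describe, by invoking the equivalence of categories in \cite{CGP1,CGP2} and identifying $\Delta_{g,n}$ with the link of the cone point of $M_{g,n}^\trop$; for the second, the paper simply notes that automorphisms of $M_{g,n}^\trop$ extend by linearity and then defers the remaining verification (that this extension is bijective) to the $g=0$ argument of Abreu--Pacini \cite{AbreuPacini}, which is precisely the density/preservation-of-finite-locus argument you have sketched.
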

\begin{proof}
	It is shown in \cite{CGP1, CGP2} that $\Delta_{g, n}$ is identified with the link of the cone point in $M_{g, n}^\trop$, and that taking the link gives rise to an equivalence of categories between smooth generalized cone complexes and symmetric $\Delta$-complexes. As a result, we get a canonical isomorphism $\Aut(\Delta_{g, n}) \cong \Aut(M_{g, n}^\trop)$. Any automorphism of $M_{g, n}^\trop$ extends by linearity to one of $\overbar{M}_{g, n}^\trop$, giving a map $\Aut(M_{g, n}^\trop) \to \Aut(\overbar{M}_{g, n}^\trop)$. That this map is an isomorphism is worked out when $g = 0$ by Abreu and Pacini \cite{AbreuPacini}, and their proof carries over verbatim to the general case.
\end{proof}	
\subsection{Skeletons of toroidal embeddings} Following Thuillier's work \cite{Thuillier} on toroidal schemes, Abramovich, Caporaso, and Payne show in \cite{ACP} how to associate an extended generalized cone complex $\overbar{\Sigma}(\mathcal{X})$ to a toroidal embedding $\mathcal{U} \subset \mathcal{X}$ of Deligne-Mumford stacks. Moreover, they show that this construction is functorial with respect to toroidal morphisms of toroidal stacks, and study $\overbar{\Sigma}(\mathcal{X})$ when $\mathcal{X} = \overbar{\mathcal{M}}_{g, n}$ and $\mathcal{U} = \mathcal{M}_{g, n}$. With respect to this toroidal structure, they show that \[\overbar{\Sigma}(\overbar{\mathcal{M}}_{g, n}) = \overbar{M}_{g, n}^\trop.\] If we put $\Aut^{\tor}(\mathcal{X})$ for the group of toroidal automorphisms of the toroidal stack $\mathcal{X}$, the functoriality of the construction of skeletons gives a group homomorphism $\Aut^{\tor}(\mathcal{X}) \to \Aut(\overbar{\Sigma}(\mathcal{X}))$. When $\mathcal{X} = \overbar{\mathcal{M}}_{g, n}$, Massarenti's work gives \textsl{a posteriori} that
\[\Aut(\overbar{\mathcal{M}}_{g, n}) \cong \Aut^\tor(\overbar{\mathcal{M}}_{g, n}) \cong S_n \]
for $2g - 2 + n \geq 3$, since $S_n$ preserves $\mathcal{M}_{g, n}$ and therefore acts toroidally. This yields a map of groups
\begin{equation}\label{ToroidalAction}
S_n \to \Aut(\overbar{M}_{g, n}^\trop),
\end{equation}
and the content of Theorem \ref{Main} is that this map is an isomorphism.
\subsection{Connection with the complex of curves and Outer space} Suppose $g \geq 2$, $n \geq 0$, and let $\Sigma_{g, n}$ be a smooth surface of genus $g$ with $n$ punctures. Then the \textsl{complex of curves} $\mathcal{C}(\Sigma_{g, n})$, first considered by Harvey ~\cite{harvey1981boundary}, is an abstract simplicial complex with vertices given by the set of free isotopy classes $[\gamma]$ of essential simple closed curves on $\Sigma_{g, n}$. There is a $k$-simplex on set of $k + 1$ vertices in $\mathcal{C}(\Sigma_{g, n})$ whenever the corresponding curve classes may be realized disjointly in $\Sigma_{g, n}$. The complex $\mathcal{C}(\Sigma_{g, n})$ admits an action of the mapping class group $\mathrm{Mod}_{g, n}$ of orientation-preserving self-diffeomorphisms of $\Sigma_{g, n}$ modulo isotopy. The \textsl{pure} mapping class group $\mathrm{PMod}_{g, n}$ is defined as the subgroup of $\mathrm{Mod}_{g, n}$ which fixes each puncture individually. Upon choosing an ordering of the punctures, one has a homeomorphism \cite{CGP2}
\begin{equation*}
\mathcal{C}(\Sigma_{g, n}) / \mathrm{PMod}_{g, n} \cong \Delta_{g, n},
\end{equation*}
connecting the study of tropical moduli spaces with curve complexes. In this way our Theorem \ref{Main} can be compared with the work of Ivanov \cite{Ivanov} and Luo \cite{Luo}, who find that the automorphism group of $\mathcal{C}(\Sigma_{g, n})$ in the simplicial category is given by the \textsl{extended} mapping class group $\mathrm{Mod}^{\pm}_{g, n}$ of \textsl{all} self-diffeomorphisms modulo isotopy: \[\Aut(\mathcal{C}(\Sigma_{g, n})) \cong \mathrm{Mod}^{\pm}_{g, n}.\] An analogous relationship exists between the moduli space of unmarked tropical curves $\Delta_{g}$ and the simplicial completion $\overbar{X}_g$ of the Culler-Vogtmann Outer space $X_g$ \cite{CV86}. Namely, $\Delta_{g}$ is the topological quotient of $\overbar{X}_g$ by the action of the outer automorphism group $\mathrm{Out}(F_g)$ of the free group on $g$ generators. Indeed, our proof of Theorem \ref{Main} is similar in some aspects to the work of Bridson and Vogtmann \cite{bridson2001}, who have shown that
\[\Aut(\mathsf{K}_g) \cong \mathrm{Out}(F_g) \]
for $g \geq 3$, where $\mathsf{K}_g$, called the \textsl{spine} of Outer space, is an $\mathrm{Out}(F_g)$-equivariant, simplicial deformation retract of $X_g$. See ~\cite{CMV} for more on the connection between Outer space and tropical moduli spaces.

\subsection{Outline of the paper.} 
We recall the definition of a symmetric $\Delta$-complex and the construction of $\Delta_{g, n}$ in Section \ref{Background}. In Section \ref{RestrictionExistence}, we study the filtration
\[ \V^{1}_{g, n} \subset \V^{2}_{g, n} \subset \cdots \subset \V^{2g -2 + n}_{g, n} = \Delta_{g, n}, \]
where $\V^{i}_{g, n}$ is the subcomplex of $\Delta_{g, n}$ parameterizing graphs with at most $i$ vertices. We will show that this filtration is intrinsic to the simplicial structure of $\Delta_{g, n}$, in that it is preserved by automorphisms. In this way we get a restriction homomorphism
\begin{equation}\label{Restriction}
\rho_{g, n}^{i}: \Aut(\Delta_{g, n}) \to \Aut(\V_{g, n}^{i})
\end{equation}
for each $i$ such that $1 \leq i \leq 2g - 2 + n$. In Section \ref{RestrictionInjects}, we prove the main technical result of the paper, which is that the restriction map $\rho^i_{g, n}$ is injective for $i = 2$.
\begin{thm}\label{TwoVertex}
	Let $g, n \geq 0$ such that $2g - 2 + n \geq 3$. Then the restriction map
	\[\rho_{g, n}^{2}: \Aut(\Delta_{g, n}) \to \Aut(\V^2_{g, n}) \]
	is an injection.
\end{thm}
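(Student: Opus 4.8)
The plan is to induct on the number of vertices, using the fact, established in Section \ref{RestrictionExistence}, that every automorphism of $\Delta_{g, n}$ preserves each subcomplex $\V^i_{g, n}$, together with the description of $\Delta_{g, n}$ as a symmetric $\Delta$-complex: a $p$-simplex is an isomorphism class of pairs $(\G, m)$ with $\G$ a stable $n$-marked genus $g$ graph having $p + 1$ edges and $m: \{0, \dots, p\} \to E(\G)$ a bijection; the $i$-th face map contracts the edge $m(i)$; and an automorphism is a family of bijections of the sets of $p$-simplices compatible with all the structure maps, so that it is the identity precisely when it fixes every simplex $(\G, m)$. Since $\V^{2g - 2 + n}_{g, n} = \Delta_{g, n}$, and the statement is vacuous when $2g - 2 + n \leq 2$, it suffices to prove: if $\phi \in \Aut(\Delta_{g, n})$ restricts to the identity on $\V^k_{g, n}$ for some $k \geq 2$, then it restricts to the identity on $\V^{k + 1}_{g, n}$.

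Fix such a $\phi$ and a simplex $(\G, m)$ with $|V(\G)| = k + 1$, and write $\phi(\G, m) = (\G', m')$. The key elementary point is that a facet $d_i(\G, m)$ lies in $\V^k_{g, n}$ if and only if $m(i)$ is a non-loop edge of $\G$: contracting a non-loop edge lowers the vertex count by one, while contracting a loop does not change it. Since $\phi$ preserves $\V^k_{g, n}$ and is the identity there, $m(i)$ is a non-loop edge of $\G$ exactly when $m'(i)$ is a non-loop edge of $\G'$, and for each such index $i$ we obtain an equality of simplices
\[ d_i(\G, m) \;=\; d_i(\G', m'), \]
equivalently an isomorphism $\G / m(i) \cong \G' / m'(i)$ respecting the induced edge-labelings.

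It remains to show that these equalities force $(\G', m') = (\G, m)$; equivalently, that a simplex of $\Delta_{g, n}$ whose support graph has at least three vertices is reconstructed from the facets obtained by contracting its non-loop edges. This reconstruction is the crux of the proof. The mechanism is that a connected graph $\G$ on $v \geq 3$ vertices has a spanning tree with $v - 1 \geq 2$ edges, all non-loop edges of $\G$; hence, as soon as some non-loop edge of $\G$ is not incident to a vertex $w$, the vertex $w$ --- with its genus, marked legs, loops, and incident non-loop edges, all carrying their labels --- appears unaltered in the corresponding contraction. The remaining work is to recover the data at a vertex $w$ incident to \emph{every} non-loop edge, as happens at the central vertex of a path on three vertices. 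For this one picks two non-loop edges $w u_1$ and $w u_2$ with $u_1 \neq u_2$, possible since $\G$ is connected with at least three vertices; reads off the vertices $u_1$ and $u_2$ from the contractions of the respective other edge, which does not touch them; and then recovers the invariants of $w$ by subtracting those of $u_1$ (or $u_2$) from the invariants of the merged vertex in the appropriate contraction, using the edge-labels to distinguish true loops at $w$ from parallel edges that have become loops. Granting the reconstruction, we conclude $(\G', m') = (\G, m)$, which completes the induction and the proof.
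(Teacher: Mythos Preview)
Your inductive framework matches the paper's exactly: assume $\Phi$ is the identity on $\V^k_{g,n}$, take a simplex $[\G,\tau]$ with $|V(\G)| = k+1$, observe that the non-loop contractions land in $\V^k_{g,n}$ and are therefore fixed, and conclude $[\G,\tau]$ is fixed by a reconstruction-from-the-deck argument. The paper calls the ordered list of non-loop contractions the \emph{nonloop contraction deck} $\mathcal{D}^{\G}_\tau$, and the heart of the section is exactly the claim that $\mathcal{D}^{\G}_\tau$ determines $(\G,\tau)$ up to isomorphism.

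There is, however, a genuine gap. You assert reconstruction for an \emph{arbitrary} stable graph with at least three vertices, but this is false: the paper exhibits (Figure~\ref{dNotEnough}) a pair of non-isomorphic edge-labelled objects of $\Gamma_{12}^{\EL}$, with three or four vertices but not of full genus, sharing the same nonloop contraction deck. The paper's Theorem~\ref{Reconstruct} requires the hypothesis $b^1(\G) = g$, and the paper reduces to this case by first invoking purity of $\V^{k+1}_{g,n}$ (Proposition~\ref{PureDimensional}): it suffices to fix the top-dimensional simplices, which are precisely those with $b^1(\G) = g$. Your argument omits this reduction, so as written it claims something that is not true.

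Even after restricting to $b^1(\G) = g$, your sketch of the reconstruction is too optimistic. The difficulty is that we only have the \emph{isomorphism class} of each contraction $(\G/e_j, \tau_j)$; there is no canonical way to point at ``the merged vertex'' or to match vertices across different contractions. Your subtraction argument for the central vertex presumes you already know which vertex of $\G/e_{j}$ came from collapsing $e_j$ and which vertex of $\G/e_{j'}$ is $u_1$, but that identification is exactly what must be established. The paper handles this by first extracting the total cycle set $\mathcal{O}^{\G}_\tau$ and the intersection matrix $\mathcal{Q}^{\G}_\tau$ from the deck (Propositions~\ref{DcomputesO} and~\ref{DataTransfer}), with a lengthy case analysis when $|V(\G)| \in \{3,4\}$ that leans on both stability and $b^1(\G) = g$; only then does it locate the merged vertex as one of maximal valence-plus-markings (proof of Proposition~\ref{Reconstruction}), a step which again uses $b^1(\G) = g$ in an essential way. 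Your outline is the right shape, but the reconstruction step needs these ingredients to go through.
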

 Given $\sigma \in S_n$, we put $f_\sigma \in \Aut(\Delta_{g,n})$ for the result of composing (\ref{ToroidalAction}) with the isomorphism $\Aut(\overbar{M}_{g, n}^\trop) \cong \Aut(\Delta_{g,n})$ of Lemma \ref{EquivalenceOfAutomorphisms}.  In Section \ref{Calculations}, we will prove the following result, which together with Theorem \ref{TwoVertex} implies Theorem \ref{Main} when $2g - 2 + n \geq 3$.
\begin{thm}\label{Image}
	Suppose that $g, n \geq 0$ and $2g - 2 + n \geq 3$, and let $\Phi \in \Aut(\Delta_{g, n})$. If $n = 0$, so $g \geq 2$, then we have
	\[\Phi|_{\V^{2}_g} = \mathrm{Id}|_{\V^2_g}. \]
	If $n \geq 1$, then there exists a unique element $\sigma \in S_n$ such that 
	\[\Phi|_{\V^{2}_{g, n}} = f_\sigma|_{\V^2_{g, n}}. \] 
\end{thm}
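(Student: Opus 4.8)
The plan is to pin down $\Phi$ on the $0$-simplices of $\Delta_{g,n}$, to extract the permutation $\sigma$ from that, and then to bootstrap up through $\V^2_{g,n}$ by induction on dimension. Recall that a $0$-simplex of $\Delta_{g,n}$ is a stable $n$-marked weighted graph of genus $g$ with one edge, of one of two kinds: the \emph{loop vertex} $w_\circ$ (one vertex of weight $g-1$ carrying all $n$ legs and one loop, defined precisely when $g\ge 1$), or a \emph{bridge vertex} $w_{\{h,A\}}$ (two vertices of weights $h$ and $g-h$ joined by an edge and carrying legs $A$ and $I_n\setminus A$, where $\{(h,A),(g-h,I_n\setminus A)\}$ runs over stable configurations). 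By Section \ref{RestrictionExistence} the filtration is preserved by $\Phi$, and $\V^1_{g,n}$ — the subcomplex of one-vertex graphs — has $w_\circ$ as its only $0$-simplex when $g\ge 1$; hence $\Phi(w_\circ)=w_\circ$ and $\Phi$ permutes the bridge vertices. I would also use repeatedly the fact \cite{CGP1, CGP2} that the link of a simplex of $\Delta_{g,n}$ is a join of strictly smaller tropical moduli spaces: the link of $w_{\{h,A\}}$ is the join of $\Delta_{h,|A|+1}$ and $\Delta_{g-h,|I_n\setminus A|+1}$.

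To extract $\sigma$, when $g\ge 1$ I would consider the bridge vertices $t_i\defeq w_{\{1,\{i\}\}}$, whose small side is a single weight-one vertex carrying only the leg $i$ (these are always defined when $g\ge1$); when $g=0$, so $n\ge 5$, I would instead use the $|A|=2$ bridge vertices $w_{\{0,\{i,j\}\}}$. Each such family is characterized intrinsically by the isomorphism type of its links, together with the filtration and an induction on $g$ and $n$ for bookkeeping, so $\Phi$ permutes $\{t_i\}_{i\in I_n}$, respectively $\{w_{\{0,\{i,j\}\}}\}_{\{i,j\}\subseteq I_n}$. In the first case this is at once a permutation $\sigma\in S_n$ (and when $n\le 1$ the group $S_n$ is trivial, so this step is vacuous). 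In the second it is an automorphism of the line graph of $K_n$, which for $n\ge 5$ comes from a unique $\sigma\in S_n$ by a Whitney-type argument, once one checks that $w_{\{0,\{i,j\}\}}$ and $w_{\{0,\{k,l\}\}}$ bound a common ``caterpillar'' $1$-simplex exactly when $\{i,j\}\cap\{k,l\}=\emptyset$. Since each $f_\tau$ ($\tau\in S_n$) is an automorphism of $\Delta_{g,n}$, we may replace $\Phi$ by $f_\sigma^{-1}\circ\Phi$; then $\Phi$ fixes $w_\circ$ and every $t_i$ (resp.\ every $w_{\{0,\{i,j\}\}}$), and it suffices to show $\Phi|_{\V^2_{g,n}}=\mathrm{Id}$.

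I would then argue in two stages. First, $\Phi$ fixes every bridge vertex $w_{\{h,A\}}$, by induction on the size of its smaller side: one exhibits a two-edge ``path'' or ``path-plus-loop'' graph whose two edge-contractions are $w_{\{h,A\}}$ and a strictly smaller bridge vertex (or a $t_i$, or $w_\circ$), checks that this intermediate $1$-simplex is the \emph{only} one realizing that unordered pair of contractions, and invokes the inductive hypothesis; a few ``all legs on one side'' bridge vertices are handled directly from links. Thus $\Phi_0=\mathrm{Id}$. Second, induct on dimension: a $p$-simplex of $\V^2_{g,n}$ represented by a pair $(\G,\omega)$ with $\G$ having at most two vertices is carried by $\Phi$ to a simplex with at most two vertices (filtration-invariance), the same number of vertices, the same multiset of facets (inductive hypothesis), and an isomorphic link in $\Delta_{g,n}$; one shows these data determine $\G$ up to isomorphism and $\omega$ up to $\Aut(\G)$, which is precisely the statement that $\Phi$ fixes $(\G,\omega)$ in the symmetric $\Delta$-complex. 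Combined with Theorem \ref{TwoVertex}, this also yields Theorem \ref{Main} in the range $2g-2+n\ge 3$.

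I expect the second stage to be the main obstacle. In a symmetric $\Delta$-complex a simplex need not be recoverable from its facets: every ``double-edge'' two-edge graph has both facets equal to $w_\circ$, and the two-edge graph with a loop and leg-partition $(A,I_n\setminus A)$ shares its facet pair with the one associated to $(I_n\setminus A,A)$, though the two are generally not isomorphic. Separating such graphs requires the finer invariant of the link in $\Delta_{g,n}$ — which, by \cite{CGP1}, records the genera and leg-counts of the pieces cut off by the edges — together with the fact, established in the first stage, that $\Phi$ fixes every bridge vertex $w_{\{h,B\}}$, so that a graph is distinguished by \emph{which} marked points lie on \emph{which} side of each of its separating edges and loops. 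One must also rule out that $\Phi$ acts on a fixed simplex by a nontrivial graph automorphism rather than by the identity. Finally, the $g=0$ case genuinely needs the Whitney-type extraction of $\sigma$ above, since there neither the loop vertex nor a single-marked-point bridge vertex is available.
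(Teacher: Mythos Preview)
Your overall architecture is reasonable and, for $g=0$, lands close to what the paper does (the paper also extracts $\sigma$ from the $|A|=2$ bridge vertices via a line-graph-of-$K_n$ argument). But for $g\ge 1$ your route diverges from the paper's and, as you suspected, the second stage does not close.

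The concrete gap is the banana graphs $\B^{0,0}_A$ (two vertices, $g+1\ge 2$ parallel non-loop edges, markings $A$ and $A^c$). These are the top-dimensional simplices of $\V^2_{g,n}$ with no loops and no separating edges. Every one-edge contraction of $\B^{0,0}_A$ is the rose $\R^{g}_{g,n}$, so the ordered list of facets is the constant list and carries no information about $A$. Every $0$-dimensional face is the loop vertex $w_\circ$, so your ``which markings lie on which side of each separating edge'' criterion is vacuous here. And the link of $[\B^{0,0}_A,\tau]$ is $\Delta_{0,\,g+1+|A|}*\Delta_{0,\,g+1+n-|A|}$, which depends only on $|A|$. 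Hence facets $+$ links $+$ $0$-faces do not separate $[\B^{0,0}_A,\tau]$ from $[\B^{0,0}_{A'},\tau]$ when $|A|=|A'|$; your induction on dimension inside $\V^2_{g,n}$ stalls exactly at these simplices. The paper resolves precisely this by stepping outside $\V^2_{g,n}$: it counts certain uncontractions into $\V^3_{g,n}$ (the invariant $\mu_{g,n}$) to get $|\Phi(A)|=|A|$, and then uses the relation $\leftrightarrow$ with the anchor graphs $\B^{g,0}_x$ (Lemmas \ref{Relationg0}--\ref{Snails}) to pin down $A$ itself. Nothing intrinsic to $\V^2_{g,n}$ suffices.

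A second, smaller issue: your identification of the family $\{t_i\}$ by ``isomorphism type of the link, plus induction on $(g,n)$'' is circular as stated. Knowing that $\Delta_{h,|A|+1}*\Delta_{g-h,n-|A|+1}\cong \Delta_{1,2}*\Delta_{g-1,n}$ forces $(h,|A|)\in\{(1,1),(g-1,n-1)\}$ presupposes that one can read off $(a,b)$ from $\Delta_{a,b}$ and cancel joins, which is close to what you are trying to prove. The paper avoids links entirely: it works directly with the top simplices $\B^{k,\ell}_A$ of $\V^2_{g,n}$, first proving via $\sS$-stabilizers and cycle-preservation that $\Phi$ respects the weak (unmarked) isomorphism type (Proposition \ref{WeakPreserved}), then using $\mu_{g,n}$ and the relation $\leftrightarrow$ to recover $A$. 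If you want to salvage your outline, the missing idea is exactly this appeal to $\V^3_{g,n}$ to distinguish the loopless two-vertex graphs.
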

The remaining values of $(g, n)$ appearing in the statement of Theorem \ref{Main} land in the set
\[\{(0, 4), (1, 1), (1, 2), (2, 0) \}; \] 
these cases are dealt with in Example \ref{SmallExample}.

\subsection{Acknowledgements} I am grateful to Melody Chan for suggesting this problem and advising me along the way. I also thank Dan Abramovich for looking over an early version of this draft, and Sam Freedman for several valuable discussions. This work was partially supported by NSF DMS-1701659 and an NSF Graduate Research Fellowship.
\section{Graph categories and the definition of $\Delta_{g,n}$}\label{Background}
In this section we describe the structure of $\Delta_{g, n}$ as a symmetric $\Delta$-complex, via an interlude on graph categories. Our definitions of the categories $\Gamma_g$ and $\Gamma_{g, n}$ are equivalent to those in ~\cite{CGP1, CGP2}; the few differences in presentation are purely superficial. Discussions of stable graphs of genus $g$ and the morphisms between them have become standard in the study of moduli spaces of curves ~\cite{Arbarello, Getzler, Kontsevich}. Similar categories have recently been studied in a representation-theoretic context by Proudfoot and Ramos ~\cite{PR1, PR2}.
\subsection{Graph categories.} By a \textbf{\textsl{graph}} $G$ we mean a finite set $X(G) = V(G) \sqcup H(G)$ of \textsl{\textbf{vertices}} and \textsl{\textbf{half-edges}}, together with a pair of maps $s_G, r_G: X(G) \to X(G)$ with $s_G^2 = \mathrm{Id}$, $r_G^2 = r_G$, and such that  
\[ \{x \in X(G) \mid r_G(x) = x \} = \{x \in X(G) \mid s_G(x) = x \} = V(G).\]
The function $r_G$ sends half-edges to their incident vertex, while $s_G$ couples half-edges. A graph $G$ has a \textsl{\textbf{geometric realization}} $|G|$ which is a finite one-dimensional CW complex; we only work with \textsl{\textbf{connected}} graphs, which are those whose geometric realization is connected as a topological space. The \textbf{\textsl{genus}} $b^1(G)$ of a graph is the first Betti number of the space $|G|$. An \textsl{\textbf{edge}} of $G$ is an element of the quotient set
\[E(G) \defeq H(G) / \{s_G(x) \sim x \}. \]
Then we have
\[ b^1(G) = |E(G)| - |V(G)| + 1, \]
and
\[2|E(G)| = |H(G)|. \]
Note that our definition allows for \textsl{\textbf{loops}}, which are those elements $e \in E(G)$ such that $s_G$ is constant on the half-edges $h_1, h_2 \in H(G)$, where $h_1, h_2$ are the two half edges of $e$.
\begin{defn}
A \textsl{\textbf{weighted graph of genus $g$}} is a pair $(G, w)$ where $G$ is a connected graph, and $w: V(G) \to \Z_{\geq 0}$ is a function, such that
\[b^1(G) + \sum_{v \in V(G)} w(v) = g. \]
\end{defn} 
We will make a category $\mathsf{Graph}_{g}$ whose objects are weighted graphs of genus $g$. The morphisms will consist of isomorphisms and edge-contractions. 
\begin{defn}
	An \textbf{\textsl{isomorphism}} $\varphi: (G, w) \to (G', w')$ of weighted graphs of genus $g$ is given by a bijection $\varphi: X(G) \to X(G)$ such that 
	\begin{itemize}
		\item$r_G' \circ \varphi = \varphi \circ r_G$,
		\item $s_G' \circ \varphi = \varphi \circ s_G$, and
		\item $w'(\varphi(v)) = w(v)$ for all $v \in V(G)$.
	\end{itemize}
\end{defn}
 
\begin{defn} 
Given $(G, w)$ a weighted graph of genus genus $g$ and an edge $e \in E(G)$, the \textsl{\textbf{edge-contraction}} $(G/e, w_e)$ is a weighted graph $(G/e, w_e)$, together with a surjective map $c_e: X(G) \to X(G/e)$, which is determined as follows.
\begin{itemize}
\item There is an identification
\[H(G/e) = H(G) \smallsetminus \{h_1, h_2 \}, \]
where $h_1, h_2$ are the two half-edges uniquely determining $e$.  Using this identification, set 
\[{s_{G/e}}_{|_{H(G/e)}} = {s_{G}}_{|_{H(G/e)}}. \]
\item If $e$ is a loop, we set $V(G/e) = V(G)$,
\[{r_{G/e}}_{|_{H(G/e)}} = {r_{G}}_{|_{H(G/e)}}. \]
 We also put
\[ w_e(v) = w(v) + 1 \]
if $v = r_G(h_1) = r_G(h_2)$,
and $w_e(v) = w(v)$ otherwise.
\item If $e$ is a non-loop edge, then we set
\[V(G/e) =(V(G) \smallsetminus \{r_G(h_1), r_G(h_2) \}) \cup \{\hat{v}\}. \] Using this identification, we set
\[{r_{G/e}}(h) = \begin{cases}
r_{G}(h) &\text{if } r_G(h) \notin \{r_G(h_1), r_G(h_2) \}\\
\hat{v} &\text{otherwise},
\end{cases} \]
when $h \in H(G/e) = H(G) \smallsetminus \{h_1, h_2 \}$. We also put $w_e(v) = w(v)$ if $v \in V(G)$, and we set \[w_e(\hat{v}) = w(r_G(h_1)) + w(r_G(h_2)).\]
The graph $G/e$ is equipped with a surjective map
\[c_e: X(G) \to X(G/e), \]
called the \textsl{\textbf{contraction map}}, which satisfies \[c_e(h_1) = c_e(h_2) =  c_e(r_G(h_1)) = c_e(r_G(h_2)) = \hat{v},\]
and $c_e(x) = x$ otherwise.
\end{itemize}
Informally, the pair $(G/e, w_e)$ is obtained from $(G, w)$ by contracting the edge $e$ to a point, and adjusting weight of the new vertex so that $(G/e, w_e)$ is still a weighted graph of genus $g$, i.e. adding weight one if $e$ is a loop, and adding the two weights coming from the endpoints of $e$ otherwise. 
\end{defn}
We are now ready to define the category $\mathsf{Graph}_{g}$.
\begin{defn}
	The category $\mathsf{Graph}_{g}$ has as its objects weighted graphs $(G, w)$ of genus $g$, and morphisms $(G, w) \to (G', w')$ given by maps of sets $X(G) \to X(G')$ which factor as compositions of isomorphisms and contraction maps. 
\end{defn}
We note that the processes of passing to sets of half-edges or edges are both contravariant with respect to morphisms in $\mathsf{Graph}_g$.
\begin{defn}
	We define functors \[H, E : \mathsf{Graph}_g^\mathrm{op} \to \mathsf{Set}\] on objects by taking sets of half-edges and edges, respectively. Given a morphism $f: G \to G'$ and $h' \in H(G')$, there is a unique element $h \in H(G)$ such that $f(h) = h'$, and we set $H(f)(h') = h$. Similarly, an edge $e' \in E(G')$ has a unique preimage $e \in E(G)$, and we set $E(f)(e') = e$.
\end{defn}
Given a morphism $f: (G, w) \to (G', w')$ of weighted graphs of genus $g$, we use the notation $f^* = E(f): E(G') \to E(G)$. The category $\mathsf{Graph}_g$ has infinitely many isomorphism classes of objects. We pass to a category $\Gamma_g$ which has only finitely many isomorphism classes via the algebro-geometric stability condition.
\begin{defn}
We say a weighted graph $(G, w)$ of genus $g$ is \textsl{\textbf{stable}} if for all $v \in V(G)$, we have
\[2w(v) - 2  + \val(v) > 0, \]
where
\[\val(v) = |\{h \in H(G) \mid r_G(h) = v \}|.  \]
Equivalently, $\val(v) \geq 3$ whenever $w(v) = 0$ and $\val(v) \geq 1$ whenever $w(v) = 1$. We put $\Gamma_g$ for the full subcategory of $\mathsf{Graph}_g$ determined by the stable graphs; the objects of $\Gamma_g$ are called \textsl{\textbf{stable weighted graphs of genus $g$}}.
\end{defn}
Note that $\Gamma_g$ contains objects only when $g \geq 2$. We use the boldface notation $\G = (G, w)$ for the objects of $\Gamma_g$, reserving the notation $(G, w)$ for those instances where it is not required that the object we are working with is stable. We now define a marked analogue $\Gamma_{g, n}$ of the category $\Gamma_g$. Recall that we have set $I_n = \{1, \ldots, n \}$.
\begin{defn}
	Suppose that $n \geq 1$ and $2g - 2 + n > 0$. A \textsl{\textbf{stable $n$-marked weighted graph of genus $g$}} is a triple $\G = (G, w, m)$ where $(G, w)$ is a weighted graph of genus $g$, and $m: I_n \to V(G)$ is a function, such that for all $v \in V(G)$, we have
	\[2w(v) - 2 + \val(v) + |m^{-1}(v)| > 0; \]
	equivalently, we have $\val(v) + |m^{-1}(v)| \geq 3$ when $w(v) = 0$, and $\val(v) + |m^{-1}(v)| \geq 1$ when $w(v) = 1$. An \textsl{\textbf{isomorphism}} $\varphi: \G \to \G'$ is an isomorphism $\varphi: (G, w) \to (G', w')$ of weighted graphs which satisfies further that $m'(s) = (\varphi \circ m)(s)$ for all $s \in I_n$. Given an edge $e \in E(G)$, the \textsl{\textbf{edge-contraction}} of $e$ is an object $\G/e = (G/e, w_e, m_e)$ in $\Gamma_{g, n}$, such that $c_e: (G, w) \to (G/e, w_e)$ is the edge-contraction of $e$ in the category of weighted graphs of genus $g$, and $m_e$ is determined as follows.
	\begin{itemize}
		\item If $e$ is a loop, so we have an identification $V(G/e) = V(G)$, we put $m_e^{-1}(v) = m^{-1}(v)$ for all $v \in V(G/e)$.
		\item If $e$ is a non-loop edge corresponding to the two half-edges $h_1, h_2$, so we have \[V(G/e) = (V(G) \smallsetminus \{r_G(h_1), r_G(h_2) \}) \cup \{\hat{v} \},\] then $m_e$ is uniquely determined by the property that $m_e^{-1}(v) = m^{-1}(v)$ whenever $v \in V(G)$, and
		\[m_e^{-1}(\hat{v}) = m^{-1}(r_G(h_1)) \cup m^{-1} (r_G(h_2)). \] 
	\end{itemize}	
	We put $\Gamma_{g, n}$ for the category whose objects are stable $n$-marked graphs of genus $g$, and whose morphisms $\G = (G, w, m) \to \G' = (G', w', m')$ are given by those maps of sets $X(G) \to X(G')$ which factor as compositions of edge-contractions and isomorphisms.
\end{defn}
For convenience we adopt the convention that $\Gamma_{g, 0} = \Gamma_g$. For each $n \geq 0$, there is a forgetful functor \[\Gamma_{g, n} \to \mathsf{Graph}_g,\] simply given by forgetting the marking function. Given an object $\G = (G, w, m)$ of $\Gamma_{g, n}$, we put $H(\G), E(\G), V(\G)$ for the sets $H(G)$, $E(G)$, $V(G)$, respectively. We can view this as composing the functors $H, E: \mathsf{Graph}_g^\mathrm{op}\to\mathsf{Set}$ with the opposite of the forgetful functor $\Gamma_{g, n} \to \mathsf{Graph}_g$. We also use the notation $r_\G = r_G$ and $s_\G = s_G$; when $n \geq 1$ we put $m_\G : I_n \to V(\G)$ for the marking function of $\G$.

Ultimately, the simplices of $\Delta_{g, n}$ will correspond to equivalence classes of pairs $(\G, \tau)$ where $\G$ is a $\Gamma_{g, n}$-object and $\tau$ is a labelling of the edges of $\G$. It shall be useful to establish some language to deal with such pairs, and we do so by making an auxiliary groupoid $\Gamma_{g, n}^{\mathrm{EL}}$. For each $p \geq 0$, we define $[p] \defeq \{0, \ldots, p\}$, and formally set $[-1] \defeq \varnothing$.

\begin{defn}\label{ELDefn}
	Suppose $g, n \geq 0$ with $3g - 3 + n > 0$. A \textbf{\textsl{stable, $n$-marked, edge-labelled pair of genus $g$}}, or simply \textsl{\textbf{edge-labelled pair}}, is a tuple $(\G, \tau)$ where $\G$ is a stable $n$-marked graph of genus $g$ and $\tau: E(\G) \to [p]$ is an edge-labelling for some $p \geq -1$. An \textbf{\textsl{isomorphism}} 
	\[\varphi: (\G, \tau) \to (\G', \tau')\] of edge-labelled pairs is an isomorphism $\varphi:\G \to \G'$ in $\Gamma_{g, n}$ which makes the triangle
	\begin{equation}\label{PairIsomorphism}
	\begin{tikzcd}
	&E(\G') \arrow[dr, "\tau'"] \arrow[rr, "\varphi^*"] & &E(\G) \arrow[dl, "\tau"']\\
	& &\lbrack p \rbrack &
	\end{tikzcd}
	\end{equation}
	commute. We say $(\G, \tau)$ and $(\G', \tau')$ are \textsl{\textbf{isomorphic}} if there exists an isomorphism of pairs between them. We use the notation $\Gamma_{g, n}^{\mathrm{EL}}$ for the groupoid whose objects are all stable $n$-marked edge-labelled pairs of genus $g$, and isomorphisms given as above.
\end{defn}
\begin{notn}\label{Skeleta}
	The categories $\Gamma_{g, n}$ and $\Gamma_{g, n}^{\mathrm{EL}}$ admit finite skeletons. To avoid any set-theoretic issues when we define $\Delta_{g,n}$, we now choose compatible skeletons $\mathsf{\Gamma}_{g, n}$ of $\Gamma_{g, n}$ and $\mathsf{\Gamma}^{\EL}_{g, n}$ of $\Gamma_{g,n}^{\EL}$ for all $g, n$. We shall use the notation $[\G, \tau]$ for the unique object of $\mathsf{\Gamma}_{g, n}^{\EL}$ to which $(\G, \tau)$ is isomorphic. So in general we have $[\G, \tau] = [\G', \tau']$ if and only if $(\G, \tau) \cong (\G', \tau')$ in $\Gamma_{g, n}^{\mathrm{EL}}$. For each $p \geq -1$, we also set
	\[\mathsf{\Gamma}_{g, n}(p) = \{\G \in \mathrm{Ob}(\mathsf{\Gamma}_{g, n}) \mid |E(\G)| = p + 1 \}, \]
	and define $\mathsf{\Gamma}_{g, n}^{\EL}(p)$ similarly. Note that there is a unique element of $\mathsf{\Gamma}_{g, n}(-1)$, corresponding to the final object of $\Gamma_{g,n}$.
\end{notn}
\subsection{$\Delta_{g, n}$ as a functor}
Let FI be the category whose objects are finite sets, including the empty set, and whose morphisms are injections. Let I be the full subcategory on the objects $[p]$ for $p \geq -1$.
\begin{defn}
A \textsl{\textbf{symmetric $\Delta$-complex}} is a functor $X: \mathrm{I}^\mathrm{op} \to \mathsf{Set}$. A morphism of symmetric $\Delta$-complexes is a natural transformation of functors.
\end{defn}
Symmetric $\Delta$-complexes have also been called \textsl{symmetric semi-simplicial sets} in the literature. Recall that a \textsl{\textbf{semi-simplicial set}} is a functor $\mathrm{I}_\mathrm{ord}^\mathrm{op} \to \mathsf{Set}$, where $\mathrm{I}_\mathrm{ord}$ is the category whose objects are the same as $\mathrm{I}$, but whose morphisms are only those injections which are order-preserving. A semi-simplical set is thought of as a blueprint for assembling a topological simplicial complex by gluing together $p$-simplices. Symmetric $\Delta$-complexes generalize these by allowing for cells of the form
\[ \sigma^p/H, \] where $\sigma^p$ is a $p$-simplex, and $H$ is a subgroup of permutations of its vertices. There is a \textbf{\textsl{geometric realization functor}}
\[|\cdot|: \mathsf{Symmetric}\,\Delta\text{-}\mathsf{complexes} \to \mathsf{Top} \] 
which we now describe. For each $p \geq 0$, choose a $p$-simplex $\sigma^p$ together with a bijection between its vertices and the set $[p]$. Given this information, any injection $\iota: [p] \to [q]$ uniquely induces a simplicial map $\iota_*: \sigma^p \to \sigma^q$, which includes $\sigma^p$ as a face of $\sigma^q$. With these conventions fixed, the geometric realization of a symmetric $\Delta$-complex $X$ is given by
\begin{equation}\label{Realization}
|X| = \frac{\left(\coprod_{ p \geq 0} X[p] \times \sigma^p\right)}{\sim};
\end{equation}
where the equivalence relation is generated by
\[ (X(\iota)(x), a) \sim (x, \iota_*(a)), \]
whenever $\iota\in \Hom_{\mathrm{I}}([p], [q])$, $x \in X[q]$, and $a \in \sigma^p$. The additional data of $X[-1]$ also equips $|X|$ with a continuous augmentation map $|X| \to X[-1]$. This augmentation is necessary for the equivalence of categories between smooth generalized cone complexes and symmetric $\Delta$-complexes discussed in the introduction; see ~\cite[\S{4.3}]{CGP1}.

We proceed with the description of the functor
\[\Delta_{g, n}: \mathrm{I}^\mathrm{op} \to \mathsf{Set}. \] 
For all $g, n \geq 0$ such that $3g - 3 + n > 0$, we set \[\Delta_{g,n}[p] = \mathsf{\Gamma}_{g, n}^{\EL}(p), \]
as in Notation \ref{Skeleta}. Given an injection $\iota: [p] \to [q]$ and a simplex $[\G, \tau] \in \Delta_{g, n}[p]$, we define $\Delta_{g,n}(\iota)([\G, \tau])$ by
\[\Delta_{g,n}(\iota)([\G, \tau]) = [\G/\tau^{-1}(S), \tau_{\iota}],\] where $S = \tau^{-1}([q]\smallsetminus \iota([p]))$, $\G/\tau^{-1}(S)$ is the graph in $\Gamma_{g, n}$ obtained from $\G$ by sequentially contracting the elements of $\tau^{-1}(S)$ (the order in which this is done is irrelevant), and $\tau_\iota$ is the unique edge labelling $E(\G/\tau^{-1}(S)) \to [p]$ making the diagram
\begin{equation}\label{FaceMaps}
\begin{tikzcd}
&E(\G) \arrow[r, "\tau"] & \lbrack q \rbrack\\
&E(\G/\tau^{-1}(S)) \arrow[u, "(c_S)^*"] \arrow[r, "\tau_\iota"] & \lbrack p \rbrack \arrow[u, "\iota"]
\end{tikzcd} 
\end{equation}
commute,
where 
\[c_S: \G \to \G/\tau^{-1}S \]
is the composition of edge-contractions. The choice of $(\G, \tau)$ representing $[\G, \tau]$ is immaterial: if we have $(\G, \tau) \cong (\G', \tau')$ in $\Gamma_{g, n}^{\EL}$, then $(\G/\tau^{-1}(S), \tau_{\iota}) \cong (\G'/\tau'^{-1}(S), \tau_{\iota}')$ as well.

The topology of the geometric realization $|\Delta_{g,n}|$ has proven to be a rich area of study. See Figure \ref{GeomExample} for some simple examples, and ~\cite{allcock, CGP1, CGP2, chan2015topology, Vogtmann} for general results. We now make some basic observations and fix notation that will be used throughout the paper. Taking $[p] = [q]$ and setting \[\sS_{p + 1} \defeq \Hom_{\mathrm{I}}([p], [p]), \] we see that there is a $\sS_{p + 1}$-action on $\Delta_{g, n}[p]$ for each $p$, given by \[\mathfrak{a} \cdot [\G, \tau] = \mathfrak{a}^*([\G, \tau])\defeq \Delta_{g,n}(\mathfrak{a}^{-1})([\G, \tau]) = [\G, \mathfrak{a} \circ \tau].\]
 Given $i \in [p]$, we put \[\delta^i: [p - 1] \to [p]\] for the unique order-preserving injection whose image does not contain $i$, and
\[\delta_i : [p] \smallsetminus \{i\} \to [p - 1] \] for the unique map satisfying $\delta_i \circ \delta^i = \mathrm{Id}$. We define \[d_i[\G, \tau] \defeq \Delta_{g,n}(\delta^i)([\G, \tau]),\]
and
\[d_{i,j}[\G, \tau] \defeq \Delta_{g, n}  (\delta^{j} \circ \delta^{\delta_{j}(i)}) [\G, \tau] = d_{\delta_{j}(i)} \left(d_j[\G, \tau]\right). \]
Inductively we can define $d_{i_0, \ldots, i_r}[\G, \tau]$ for any $r \leq p$. The value of $d_{i_0, \ldots, i_r}[\G, \tau]$ only depends on the set $\{i_0, \ldots, i_r\} \subseteq [p]$, so we also use the notation
\[d_S[\G, \tau] \defeq d_{i_0, \ldots, i_r}[\G, \tau]  \]
when $S = \{i_0, \ldots, i_r \}$.
\begin{figure}[h]
	\centering
	\includegraphics[scale=1]{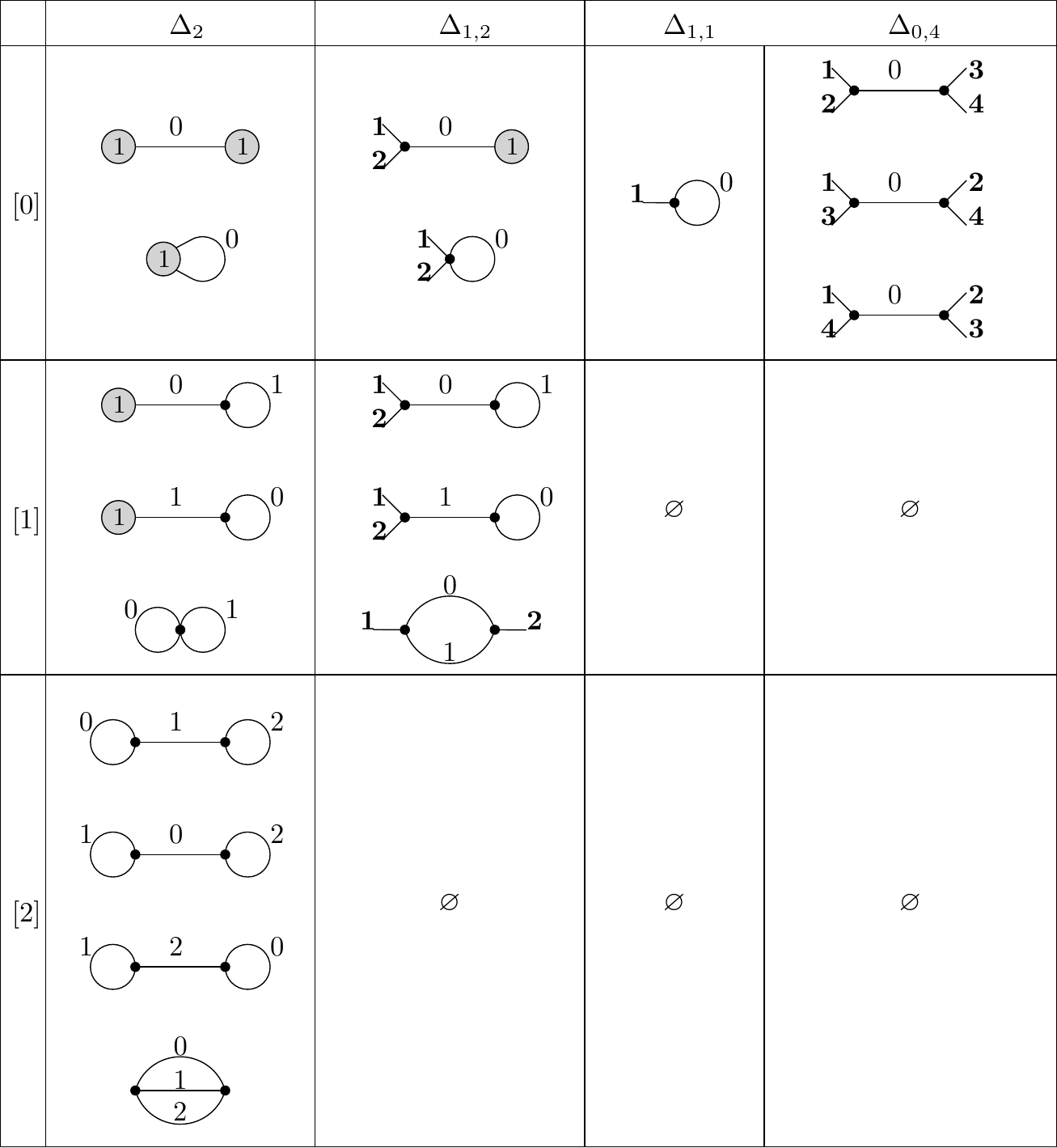}
	\caption{The $\sS_{p + 1}$-sets $\Delta_2[p]$, $\Delta_{1,2}[p]$, $\Delta_{1,1}[p]$, and $\Delta_{0, 4}[p]$ for $0 \leq p \leq 2$. Boldface numbers on half-edges indicate the markings supported at that vertex. Vertices which are not filled are of weight zero.}
	\label{FunctorExample}
\end{figure}
	\begin{figure}[h]
	\centering
	\includegraphics[scale=1]{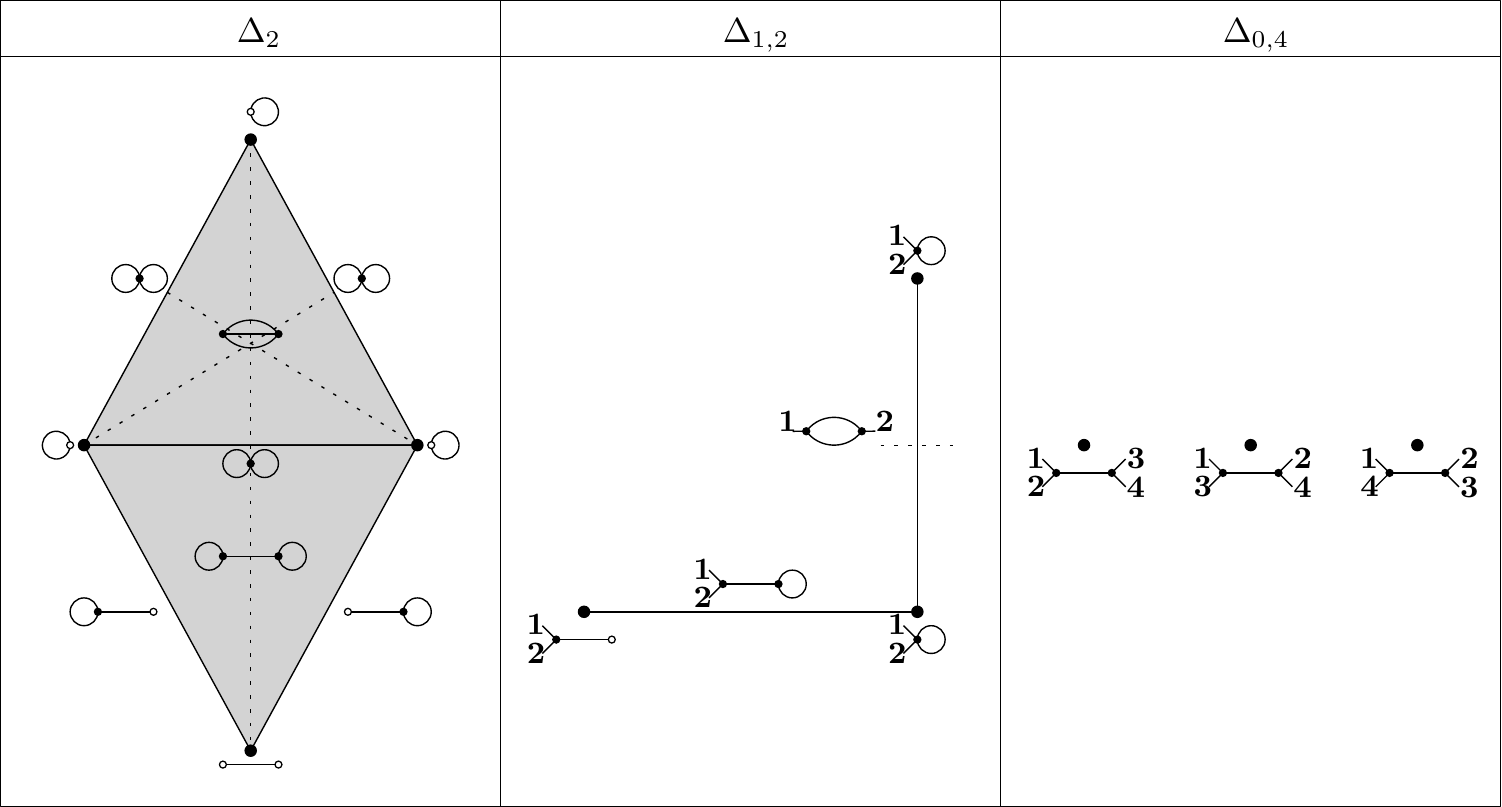}
	\caption{The geometric realizations of $\Delta_{2}$, $\Delta_{1, 2}$, and $\Delta_{0, 4}$, with simplices labelled by the graphs they represent. Dashes indicate lines of symmetry for the self-gluing of simplices. Vertices of weight one are hollow.}
	\label{GeomExample}
\end{figure}
\begin{rem}
	Note that the functor $\Delta_{0,3}$ takes a nonempty value only on $[-1]$, as $\Gamma_{0, 3}$ has a unique object up to isomorphism, consisting of a vertex of weight $0$ supporting all three markings. This is consistent with the fact that $\Delta_{0,3}$ is the link of the generalized cone complex $M_{0,3}^\trop$ consisting of a single point. Throughout the paper we assume $3g - 3 + n > 0$ so as to exclude this trivial case.
\end{rem}	
\subsection{Automorphisms of symmetric $\Delta$-complexes}
An automorphism of $\Delta_{g, n}$ is given by a natural isomorphism of functors $\Phi: \Delta_{g, n} \to \Delta_{g, n}$. This is the data of a bijection $\Phi_p: \Delta_{g, n}[p] \to \Delta_{g, n}[p]$ for each $p \geq -1$, respecting both the simplicial and symmetric structure. That is, for any $i \in [p]$, we must have a commuting diagram
\begin{equation}\label{SimplicialStructure}
\begin{tikzcd}
&\Delta_{g, n}\lbrack p \rbrack \arrow[r, "\Phi_p"] \arrow[d, "d_i"] & \Delta_{g, n}\lbrack p \rbrack \arrow[d, "d_i"]\\
&\Delta_{g, n}\lbrack p-1 \rbrack \arrow[r, "\Phi_{p - 1}"] & \Delta_{g, n}\lbrack p-1 \rbrack
\end{tikzcd},
\end{equation}
and for any $\mathfrak{a} \in \sS_{p + 1}$, a commuting diagram
\begin{equation}\label{SymmetricStructure}
\begin{tikzcd}
&\Delta_{g, n}\lbrack p \rbrack \arrow[r, "\Phi_p"] \arrow[d, "\mathfrak{a}^*"] & \Delta_{g, n}\lbrack p \rbrack \arrow[d, "\mathfrak{a}^*"]\\
&\Delta_{g, n}\lbrack p \rbrack \arrow[r, "\Phi_{p}"] & \Delta_{g, n}\lbrack p \rbrack
\end{tikzcd};
\end{equation}
that is, we must have \[d_i\Phi_p[\G, \tau] = \Phi_{p - 1} d_i[\G, \tau] \] and \[\mathfrak{a} \cdot \Phi_p[\G, \tau] = \Phi_p \left(\mathfrak{a} \cdot [\G, \tau]\right) = \Phi_p[\G, \mathfrak{a} \circ \tau].\] We sometimes refer to the above property as ``$\sS$-equivariance", thinking of $\sS$ as the groupoid
\[\sS = \coprod_{p \geq 1} \sS_p. \] Conversely, any $\sS$-equivariant collection of bijections $\Phi = \{\Phi_p \}_{p = -1}^{\infty}$ satisfying (\ref{SimplicialStructure}) for all $p$ determines a natural isomorphism of functors, since any injection $\iota: [p] \to [q]$ can be factored as an appropriate sequence of maps of the form $\delta^i$ followed by an element of $\sS_{q + 1}$. For ease of notation, we will usually suppress the subscript on the natural transformation $\Phi$ and write $\Phi[\G, \tau]$ instead of $\Phi_p[\G, \tau]$. Note that for any $p$-simplex $[\G, \tau]$, we have
\[\Stab_{\sS_{p + 1}}[\G, \tau] \cong \Aut_{E}(\G), \]
where we define $\Aut_{E}(\G)$ to be the quotient of $\Aut(\G)$ by the normal subgroup of automorphisms which act trivially on $E(\G)$. By $\sS$-equivariance, we see that
\begin{equation}\label{Stabilizers}
\Stab_{\sS_{p + 1}}[\G, \tau] = \Stab_{\sS_{p + 1}}\Phi[\G, \tau]
\end{equation}
If we put $\Phi[\G, \tau] = [\G', \tau']$ then the edge-labellings $\tau, \tau'$ induce faithful actions of $\Aut_E(\G)$ and $\Aut_E(\G')$ on the set $[p]$ and thus two embeddings
\[\Pi: \Aut_E(\G) \to \sS_{p + 1} \]
and
\[\Pi': \Aut_E(\G') \to \sS_{p + 1}. \]
 Equation \ref{Stabilizers} is equivalent to the statement that the images of these maps agree, as subgroups of $\sS_{p + 1}$. 
 \begin{notn}\label{PhiG}
 	Observe that because of $\sS$-equivariance, whenever we are given a graph $\G \in \mathrm{Ob}(\mathsf{\Gamma}_{g, n})$ and $\Phi \in \Aut(\Delta_{g, n})$, we get a unique graph $\Phi \G \in \mathrm{Ob}(\mathsf{\Gamma}_{g, n})$ by picking any edge-labelling $\tau$ of $\G$, and then setting $\Phi \G = \G'$ where $\Phi[\G, \tau] = [\G', \tau']$. Any other choice of edge-labelling $\pi$ of $\G$ is related to $\tau$ by an element $\mathfrak{a} \in \sS_{p + 1}$, so we have
 	\[\Phi[\G, \pi] = \mathfrak{a} \cdot [\G', \tau'] = [\G', \mathfrak{a} \circ \tau']. \]
 	In particular, when we know the action of $\Phi$ on $[\G, \tau]$ for some choice of $\tau$, we know the action for all possible choices of $\tau$. As such, another way to think about $\Phi \in \Aut(\Delta_{g, n})$ is as the data of permutations of $\mathsf{\Gamma}_{g, n}(p)$ for all $p \geq -1$, together with bijections $\Phi_{\G} : E(\G) \to E(\Phi \G)$ for every $\G \in \mathrm{Ob}(\mathsf{\Gamma}_{g, n})$, which induce isomorphisms $\Aut_{E}(\G) \cong \Aut_{E}(\Phi \G)$ making $\Phi_{\G}$ equivariant. Moreover, for any $e \in E(\G)$, we must have $\Phi \G/ \Phi_\G (e) = \Phi(\G/e)$, and the maps $\Phi_{\G}, \Phi_{\G/e}$ must fit into a commutative diagram
 	\[\begin{tikzcd}
 	&E(\G) \arrow[r, "\Phi_\G"] &E(\Phi\G) \\
 	&E(\G/e) \arrow[r, "\Phi_{\G/e}"] \arrow[u, "c_e^*"] &E(\Phi\G/ \Phi_\G (e) ) \arrow[u, "c_{\Phi_\G(e)}^*"]
 	\end{tikzcd}. \]
 	Hereafter we will use the notation $\Phi[\G, \tau] = [\Phi\G, \Phi\tau]$, with the understanding that $\Phi\tau$ is the edge-labelling of $\Phi \G$ determined by $\tau$ and the bijection $\Phi_{\G}: E(\G) \to E(\Phi\G)$.
 \end{notn}	
 
 \begin{exmp}\label{SmallExample}
  We now show how the property of $\sS$-equivariance allows us to compute $\Aut(\Delta_{g, n})$ in some small examples, namely for those $(g, n)$ such that $3g - 3 + n > 0$ but such that $2g - 2 + n < 3$. These $(g, n)$ are given by $(0, 4)$, $(1, 1)$, $(1, 2)$, and $(2, 0)$. See Figure \ref{FunctorExample} for a complete list of the positive dimensional simplices of $\Delta_{g, n}$ in these cases. We see from Figure \ref{FunctorExample} that $\Delta_{1,1}$ consists of a single vertex, hence $\Aut(\Delta_{1,1})$ is trivial, while $\Delta_{0, 4}$ consists of three disjoint vertices, hence $\Aut(\Delta_{0, 4}) \cong S_3$. When $(g, n) = (2, 0)$, the four $2$-simplices are completely distinguished by their $\sS_3$-stabilizers. This forces any automorphism of $\Delta_2$ to fix each top-dimensional simplex. Since any other simplex of $\Delta_2$ is the face of some $2$-simplex (in general, it is well-known that $\Delta_{g, n}$ is always \textsl{pure}), we may conclude that $\Aut(\Delta_2)$ is trivial.
 	
 	We now turn to $\Aut(\Delta_{1,2})$, which again has the property that every simplex is contained in a top-dimensional face, where $\dim \Delta_{1, 2} = 1$. There is a unique $1$-simplex which has a non-trivial $\sS_2$-stabilizer, and hence that simplex is fixed by any automorphism of $\Delta_{1,2}$. The vertex in $\Delta_{1,2}$ corresponding to a graph with a loop is a face of the edge which has a nontrivial stabilizer, and thus must also be fixed by any automorphism of $\Delta_{1,2}$. There is only one other vertex, which therefore is also fixed by any automorphism. This forces any automorphism to fix the remaining two edges of $\Delta_{1,2}$, since switching them would force the switching of the vertices, and we conclude that $\Aut(\Delta_{1, 2})$ is trivial as well.
 \end{exmp}
After Example \ref{SmallExample}, the remaining cases of Theorem \ref{Main} occur when $g, n \geq 0$ satisfy $2g - 2 + n \geq 3$.

\subsection{The $S_n$-action on $\Delta_{g, n}$.} Suppose $n \geq 1$. Given a $\Gamma_{g, n}$-object $\G = (G, w, m)$ and $\sigma \in S_n$, define
\[\sigma\G \defeq (G, w, m \circ \sigma^{-1}). \]
We have identifications $E(\sigma\G) = E(\G)$ and $V(\sigma\G) = V(\G)$, so the formula
\[\sigma \cdot [\G, \tau] = [\sigma\G, \tau] \]
makes sense, and defines an $S_n$-action on $\Delta_{g, n}[p]$ for all $p \geq -1$. This action is compatible with the $\sS$-action (\ref{SymmetricStructure}) and all boundary maps (\ref{SimplicialStructure}), and thus defines an $S_n$-action on $\Delta_{g,n}$ by automorphisms. In this way we have a map
\[S_n \to \Aut(\Delta_{g, n}), \]
and we put $f_\sigma \in \Aut(\Delta_{g,n})$ for the image of $\sigma \in S_n$ under this map. As discussed in the introduction, this action of $S_n$ on $\Delta_{g, n}$ is functorially induced by the $S_n$-action on $\overbar{\mathcal{M}}_{g, n}$ by toroidal automorphisms.

\section{First properties of $\Aut(\Delta_{g, n})$}\label{RestrictionExistence}
Continue to assume that $g, n \geq 0$ and $3g - 3 + n > 0$. In this section, we show that $\Aut(\Delta_{g, n})$ preserves the number of vertices of a graph: for any $\G \in \mathrm{Ob}(\mathsf{\Gamma}_{g, n})$ and $\Phi \in \Aut(\Delta_{g, n})$, we have $|V(\G)| = |V(\Phi\G)|$, where $\Phi\G$ is as determined in Notation \ref{PhiG}.
\subsection{The subcomplexes $\V^{i}_{g, n} \subseteq \Delta_{g, n}$ for $i \geq 1$.}
We can filter $\Delta_{g, n}$ by subcomplexes
\[\V^{1}_{g, n} \subset \V^{2}_{g, n} \subset \cdots \subset \V^{2g - 2 +n}_{g, n} = \Delta_{g, n}, \]
where $\V^{i}_{g, n}$ parameterizes those graphs with at most $i$ vertices. To be precise, a \textsl{\textbf{subcomplex}} of a symmetric $\Delta$-complex is a subfunctor. The subcomplex $\V^{i}_{g, n}$ corresponds to the subfunctor
\[\V_{g,n}^i[p] = \{[\G, \tau] \in \Delta_{g,n}[p] \mid |V(\G)| \leq i \}. \]
As defined, $\V_{g, n}^i$ is a subfunctor of $\Delta_{g, n}$ because edge-contractions either preserve or lower the number of vertices of a graph. We shall now compute the dimensions of the subcomplexes $\V^{i}_{g, n}$ and show that they are \textsl{pure}, in the sense of the following definition.
\begin{defn}
	Given a symmetric $\Delta$-complex $X: \mathrm{I}^\op \to \cat{Set}$, we say $\xi \in X[p]$ is a \textsl{\textbf{facet}} if $\xi$ is not in the image of $X(\delta^i)$ for any $i \in [p + 1]$. We say $X$ is \textsl{\textbf{pure of dimension $d$}} if all of the facets of $X$ are of dimension $d$, i.e. contained in $X[d]$.
\end{defn}	
\begin{prop}\label{PureDimensional}
	For each $i$ such that $1 \leq i \leq 2g - 2 + n$, the subcomplex $\V^{i}_{g, n}$ is pure of dimension $g + i - 2$. Moreover, we have the equality
	\[\V_{g, n}^{2g - 2 + n} = \Delta_{g, n}. \]
\end{prop}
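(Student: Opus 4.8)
The plan is to prove the statement in two parts: first establish the dimension count and purity for each $\V^i_{g,n}$, and then observe that the $i = 2g-2+n$ case recovers all of $\Delta_{g,n}$.

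\textbf{Dimension and maximal graphs.} First I would identify which graphs $\G$ with $|V(\G)| \leq i$ maximize $|E(\G)|$, since the dimension of $\V^i_{g,n}$ is one less than the maximum of $|E(\G)|$ over such $\G$. Using the genus identity $b^1(G) = |E(G)| - |V(G)| + 1$ together with $b^1(G) + \sum_v w(v) = g$, we get $|E(\G)| = g - \sum_v w(v) + |V(\G)| - 1$. This is maximized by taking all vertex weights zero (when possible) and $|V(\G)| = i$, giving $|E(\G)| \leq g + i - 1$, hence $\dim \V^i_{g,n} \leq g + i - 2$. I would then exhibit a stable graph realizing this bound: take a graph with $i$ vertices, all of weight zero, genus $g$ concentrated in $g$ loops (or a suitable cycle structure) and enough edges/markings distributed to make every vertex stable, i.e. valence plus number of markings at least $3$ at each weight-zero vertex. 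A clean construction is a ``caterpillar'' or cycle on the $i$ vertices with $g$ additional independent cycles and the $n$ markings spread out; one must check $2g-2+n \geq i$ is exactly what allows all weights to be zero while keeping stability, which matches the range $1 \leq i \leq 2g-2+n$. The edge-labelling of such a graph gives a facet of dimension $g+i-2$.

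\textbf{Purity.} For purity I must show every facet of $\V^i_{g,n}$ has dimension exactly $g+i-2$, equivalently that any $[\G,\tau] \in \V^i_{g,n}[p]$ with $p < g+i-2$ is a proper face of some simplex in $\V^i_{g,n}$, i.e. lies in the image of some $\V^i_{g,n}(\delta^j)$. Since faces in $\Delta_{g,n}$ correspond to edge-contractions, I need: if $\G$ has $|V(\G)| \leq i$ and $|E(\G)| < g+i-1$, then $\G$ admits an \emph{uncontraction}, a stable graph $\G'$ with $|V(\G')| \leq i$ and an edge $e$ with $\G'/e \cong \G$. The count $|E(\G)| = g - \sum_v w(v) + |V(\G)| - 1 < g+i-1$ forces either some vertex to have positive weight or $|V(\G)| < i$. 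In the first case, if some vertex $v$ has $w(v) \geq 1$, I can split off a loop at $v$ (decreasing $w(v)$ by one, adding one edge), which is the inverse of a loop-contraction and preserves vertex count and stability. In the second case, if $|V(\G)| < i$, I can split a vertex into two, distributing half-edges so that both resulting vertices are stable — this is possible precisely because the stability inequality gives enough valence/marking slack; this adds one vertex and one edge and is the inverse of a non-loop contraction. Either way $\G$ is a proper face of a larger simplex still in $\V^i_{g,n}$, so any facet must already have the maximal number of edges.

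\textbf{The top piece.} Finally, $\V^{2g-2+n}_{g,n} = \Delta_{g,n}$: every stable $\G \in \mathrm{Ob}(\mathsf{\Gamma}_{g,n})$ satisfies $|V(\G)| \leq 2g-2+n$. This follows by summing the stability inequality $2w(v) - 2 + \val(v) + |m^{-1}(v)| > 0$, i.e. $\geq 1$, over all vertices: the left side sums to $2\sum_v w(v) - 2|V(\G)| + 2|E(\G)| + n$, and using $|E(\G)| = g - \sum_v w(v) + |V(\G)| - 1$ this equals $2g + n - 2$, so $|V(\G)| \leq 2g - 2 + n$, with the functor equation $\V^{2g-2+n}_{g,n}[p] = \Delta_{g,n}[p]$ following immediately.

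\textbf{Main obstacle.} The delicate step is the purity argument, specifically producing the uncontraction while \emph{preserving stability and the vertex bound}: when splitting a vertex one must verify the half-edges and markings can be partitioned so both new vertices meet $2w-2+\val+|m^{-1}| > 0$, and handle low-valence or low-weight corner cases (e.g. a vertex of weight zero and valence exactly $3$, or weight $1$ and valence $1$) where the split is not possible and one must instead use the loop-splitting move or argue the graph is already maximal. Organizing this case analysis cleanly — ideally by always preferring a weight-reducing loop split when any weight is positive, and only splitting vertices when all weights vanish — is where the real work lies.
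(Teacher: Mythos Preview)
Your proposal is correct and follows essentially the same approach as the paper: both arguments hinge on the edge-count formula $|E(\G)| = g - \sum_v w(v) + |V(\G)| - 1$, and both establish purity by observing that a graph with a positive-weight vertex admits a loop-uncontraction (preserving vertex count) while a graph with all weights zero but fewer than $i$ vertices must have some vertex with $\val(v) + |m^{-1}(v)| > 3$ (else the valence sum forces $|V(\G)| = 2g-2+n$), which can then be split. The paper is terser---it simply asserts the uncontraction exists and that facets of $\V^i_{g,n}$ have $i$ vertices and zero weights---while you are more explicit about the case split and the potential obstacle in partitioning half-edges stably; that obstacle dissolves once you note that a weight-zero vertex with $\val + |m^{-1}| \geq 4$ can always be split into two pieces each receiving at least two incidences (plus the new bridge), and that you only ever split such a vertex. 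Your direct summation of the stability inequalities to get $|V(\G)| \leq 2g-2+n$ is a slightly cleaner route to the equality $\V^{2g-2+n}_{g,n} = \Delta_{g,n}$ than the paper's, which instead reads it off from the facet characterization.
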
	
\begin{proof}
	The purity and dimension of $\Delta_{g,n}$ are well-known, but we include the argument here for the sake of completeness. If a stable graph $\G$ contains a vertex $v \in \G$ such that either $w(v) > 0$ or $\val(v) + |m^{-1}(v)| > 3$, then there exists a stable graph $\hat{\G}$ such that $|E(\hat{\G})| > |E(\G)|$ and a collection of edges $S \subseteq E(\hat{\G})$ such that $\hat{\G}/S \cong \G$. Thus, if $\G$ is maximal with respect to the poset determined by edge-contractions, every vertex $v \in V(\G)$ satisfies $w(v) = 0$ and $\val(v) + |m^{-1}(v)| = 3$. We must have $b^1(\G) = g$, so
	\begin{equation}\label{GenusBettiBaby}
	|E(\G)| - |V(\G)| + 1 = g
	\end{equation}
	Since
	\[3 |V(\G)| = \sum_{v \in V(\G)} (\val(v) + |m^{-1}(v)|) = |H(\G)| + n, \]
	and $|H(\G)| = 2|E(\G)|$, we conclude that
	\[|E(\G)| = 3g - 3 + n \quad \mbox{and} \quad |V(\G)| = 2g - 2 + n.\]
	Thus we have proven that if $[\G, \tau]$ is a facet of $\Delta_{g, n}$, then $[\G, \tau] \in \V^{2g - 2 + n}_{g, n}[3g - 4 + n]$. This simultaneously proves that $\V^{2g - 2 + n}_{g, n} = \Delta_{g, n}$ and that both complexes are pure of dimension $3g - 4 + n$. Now, if $[\G, \tau] \in \V^{i}_{g, n}$ is a facet, then $\G$ has $i$ vertices and all vertex weights equal to zero, i.e. $b^1(\G) = g$. Thus we conclude that $|E(\G)| = g + i - 1$, which shows that $\V^{i}_{g, n}$ is pure of dimension $g + i - 2$.
\end{proof}	
\begin{prop}\label{VerticesPreserved}
	Given $\Phi \in \Aut(\Delta_{g, n})$ and $[\G, \tau] \in \V^{i}_{g, n}[p] \smallsetminus \V^{i - 1}_{g, n}[p]$, we have $\Phi[\G, \tau] \in  \V^{i}_{g, n}[p] \smallsetminus \V^{i - 1}_{g, n}[p]$. Therefore we have a map of groups
	\[\rho^i_{g, n}: \Aut(\Delta_{g,n}) \to \Aut(\V^i_{g, n}) \]
	given by restriction.
\end{prop}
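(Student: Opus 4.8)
\subsection*{Proof proposal}

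The plan is to show that $\Phi$ preserves the number of vertices: $|V(\Phi\G)| = |V(\G)|$ for every $\G \in \mathrm{Ob}(\mathsf{\Gamma}_{g, n})$, where $\Phi\G$ is as in Notation~\ref{PhiG}. Granting this, if $[\G, \tau] \in \V^{i}_{g, n}[p] \smallsetminus \V^{i - 1}_{g, n}[p]$ then $\Phi\G$ has exactly $i$ vertices too, so $\Phi[\G, \tau] \in \V^{i}_{g, n}[p] \smallsetminus \V^{i - 1}_{g, n}[p]$; hence $\Phi$ restricts to a levelwise bijection of $\V^{i}_{g, n}$ commuting with all face maps, i.e.\ an automorphism, and $\rho^{i}_{g, n}$ is a homomorphism since restriction respects composition and identities. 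For the vertex count, I would first record two elementary facts: since each $\Phi_{p}$ is a bijection, $\Phi$ preserves the dimension of a simplex, so $|E(\Phi\G)| = |E(\G)|$; and since $\Phi$ commutes with every face map it carries facets to facets, so by Proposition~\ref{PureDimensional} the equality $|V(\Phi\G)| = |V(\G)|$ is automatic for facet graphs. The work is to promote this to all graphs.

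The idea is to extract $|V(\G)|$ from a \emph{refinement poset}. Let $\mathcal{P}$ be the poset whose elements are the objects of $\mathsf{\Gamma}_{g, n}$, with $\G' \leq \G$ when $\G$ admits a morphism onto $\G'$ (a composition of edge-contractions); antisymmetry holds because a contraction that preserves the edge set is an isomorphism. By Notation~\ref{PhiG}, $\Phi$ carries edge-contractions to edge-contractions, so $\Phi$ is an automorphism of $\mathcal{P}$, and in particular it maps the up-set $\mathcal{P}_{\geq \G}$ isomorphically onto $\mathcal{P}_{\geq \Phi\G}$ for each $\G$. Now $\mathcal{P}_{\geq \G}$ is the poset of refinements of $\G$, and a direct inspection of the definition of $\Delta_{g,n}$ shows that a refinement is exactly the choice of an independent refinement at each vertex of $\G$; this gives a poset isomorphism
\[ \mathcal{P}_{\geq \G} \;\cong\; \prod_{v \in V(\G)} R_v, \]
where $R_v$ is the refinement poset of the vertex $v$, isomorphic to the face poset (including the empty face) of $\Delta_{w(v),\, n_v}$ with $n_v \defeq \val_{\G}(v) + |m_{\G}^{-1}(v)|$. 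The factors with $(w(v), n_v) = (0, 3)$ are one-element posets and drop out.

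Since each $\mathcal{P}_{\geq \G}$ is a finite connected poset, Hashimoto's unique-factorization theorem for finite connected posets gives an essentially unique decomposition into directly-indecomposable factors. Each nontrivial $R_v$ is directly indecomposable — equivalently, $\Delta_{w(v), n_v}$ is not a nontrivial join, which one checks directly for each pair $(w, m)$ with $3w - 3 + m > 0$ using crude invariants (for instance, a nontrivial join is connected and, if one-dimensional, a complete bipartite graph, which rules out $\Delta_{0,4}$, $\Delta_{0,5}$, $\Delta_{1,2}$, etc.). Hence $\Phi$ preserves the multiset $\{R_v : (w(v), n_v) \neq (0,3)\}$; and because the isomorphism type of $R_v$ determines the pair $(w(v), n_v)$ — again a finite check, e.g.\ by comparing dimensions together with the cardinality of the set of $0$-simplices — $\Phi$ preserves the multiset of nontrivial vertex types. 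Writing $k$ for the number of such vertices and $N \defeq \sum_{(w(v), n_v) \neq (0,3)} n_v$, both preserved by $\Phi$, the remaining $|V(\G)| - k$ vertices each contribute $n_v = 3$ to the identity $\sum_{v \in V(\G)} n_v = 2|E(\G)| + n$, so $|V(\G)| = k + \tfrac{1}{3}\bigl(2|E(\G)| + n - N\bigr)$. Every term on the right is preserved by $\Phi$, giving $|V(\Phi\G)| = |V(\G)|$, which completes the proof.

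The main obstacle is the structural input of the second paragraph: identifying $\mathcal{P}_{\geq \G}$ with a product of vertex-refinement posets, and checking that this is a genuine poset isomorphism. I would pass to the bare refinement poset precisely because the symmetric $\Delta$-complex structure on the link of $[\G,\tau]$ can collapse would-be join factors under automorphisms of $\G$ (already for a graph with two weight-one vertices joined by one edge), so a naive "count the join factors of the link" argument fails; the poset $\mathcal{P}$ remembers enough to avoid this. The two finite verifications above — direct indecomposability of the $R_v$ and injectivity of $(w, m) \mapsto \Delta_{w, m}$ on isomorphism types — are routine but should be carried out uniformly in $3w-3+m$.
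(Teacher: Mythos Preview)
Your argument has a genuine gap at the product decomposition $\mathcal{P}_{\geq \G} \cong \prod_{v \in V(\G)} R_v$. This isomorphism fails whenever $\Aut(\G)$ acts nontrivially on $V(\G)$: the up-set $\mathcal{P}_{\geq \G}$ in the poset of \emph{isomorphism classes} is the quotient of $\prod_v R_v$ by the induced $\Aut(\G)$-action, not the product itself. Your own cautionary example --- $g = 2$, $n = 0$, $\G$ the graph with two weight-$1$ vertices joined by a single edge --- already exhibits this. Here each $R_{v_i}$ is the $2$-element chain (the face poset of $\Delta_{1,1}$), so $\prod_v R_v$ is the $4$-element square; but $\mathcal{P}_{\geq \G}$ consists of exactly three isomorphism classes ($\G$ itself, the graph with one loop added, and the dumbbell), forming a $3$-element chain. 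Hashimoto's theorem then reports a single indecomposable factor, not two, and that factor is not isomorphic to any $R_v$; so the recovery of the multiset of vertex types, and with it your formula for $|V(\G)|$, breaks down. The collapse you correctly diagnosed for the symmetric $\Delta$-complex link happens just as badly in the bare poset of isomorphism classes.

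For comparison, the paper takes a completely different and more elementary route: it reformulates the claim as $b^1(\Phi\G) = b^1(\G)$, then shows that the rose simplices $[\R^k_{g,n}]$ are fixed by any $\Phi$. The key is that $[\R^1_{g,n}]$ is intrinsically characterized as the unique $0$-simplex lying in every facet, hence fixed; this forces $\Phi$ to preserve bridges, and by exhibiting a facet whose edges are all loops or bridges one deduces that $[\R^g_{g,n}]$ (and therefore every $[\R^k_{g,n}]$) is fixed. Since $b^1(\G) \geq k$ is equivalent to $[\R^k_{g,n}]$ being a face of $[\G,\tau]$, the result follows. This avoids any global structural theorem about posets and works uniformly without case checks on $(w,m)$.
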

Write $[\Phi\G, \Phi\tau] = \Phi[\G, \tau]$. As an automorphism of a symmetric $\Delta$-complex takes $p$-simplices to $p$-simplices, we certainly have $[\Phi\G, \Phi\tau] \in \Delta_{g,n}[p]$. Thus, we have \[[\Phi\G, \Phi\tau] \in \V^{i}_{g, n}[p] \smallsetminus \V^{i - 1}_{g, n}[p] \] if and only if
\[b^1(\Phi\G) = b^1(\G). \]
As such, Proposition \ref{VerticesPreserved} may be reformulated as follows.
\begin{prop}\label{GenusPreserved}
Given $\Phi \in \Aut(\Delta_{g, n})$ and $[\G, \tau] \in \Delta_{g, n}[p]$, then \[b^1(\Phi\G) = b^1(\G). \]
\end{prop}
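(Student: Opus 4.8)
The plan is to characterize the Betti number $b^1(\G)$ — equivalently, by \eqref{GenusBettiBaby}-style bookkeeping, the quantity $|V(\G)|$ — in terms of data that is manifestly intrinsic to the symmetric $\Delta$-complex structure of $\Delta_{g,n}$, and hence preserved by any $\Phi \in \Aut(\Delta_{g,n})$. The cleanest route is to use the dimension computation from Proposition \ref{PureDimensional}: a simplex $[\G, \tau]$ of dimension $p$ has $|E(\G)| = p + 1$, and among all graphs sitting at the "bottom" of the poset of edge-contractions below $[\G,\tau]$ — i.e. graphs obtained by contracting edges until no further contraction is possible — one reaches a weighted graph all of whose vertices have weight zero and valence (plus markings) exactly three only if one contracts a maximal set, but more usefully: the \emph{minimal} faces below $[\G,\tau]$ that still have the same first Betti number detect $b^1$. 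Concretely, I would show that $b^1(\G)$ equals the dimension of the smallest-dimensional face $[\G', \tau']$ of $[\G, \tau]$ obtained by a sequence of edge-contractions $d_i$ that only ever contract non-loop edges, plus one; equivalently, $b^1(\G)$ is one less than the dimension of the face obtained by contracting a spanning tree of $\G$, which is the unique (up to the $\sS$-action) face of minimal dimension among those reachable from $[\G, \tau]$ by contractions that strictly decrease the vertex count at each step and never lower the dimension below $b^1(\G) - 1$.

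First I would make precise the following intrinsic characterization. For a simplex $\xi = [\G, \tau] \in \Delta_{g,n}[p]$, call a face $d_S \xi$ (for $S \subseteq [p]$) a \textbf{maximal contraction along $\xi$} if contracting the corresponding edges $\tau^{-1}(S)$ collapses $\G$ to a graph with a single vertex — that is, $\tau^{-1}(S)$ contains a spanning tree of $\G$ but the choice is made to contract exactly a spanning tree's worth of non-loop edges. The key combinatorial fact is: the set $\tau^{-1}([p] \smallsetminus S)$ then consists of $b^1(\G)$ loops at the single remaining vertex, so $d_S \xi$ has dimension $b^1(\G) - 1$ and corresponds to the graph $R_g^{(n)}$ — a single vertex of weight $g - b^1(\G)$ carrying all $n$ markings with $b^1(\G)$ loops attached. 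Thus $b^1(\G)$ is recovered as: one plus the \emph{minimum}, over all $S \subseteq [p]$ such that $d_S\xi \in \V^1_{g,n}$ (i.e. $d_S \xi$ is represented by a one-vertex graph), of $\dim d_S \xi = p - |S|$. Since $\V^1_{g,n}$ is itself intrinsic — by Proposition \ref{VerticesPreserved}, or more elementarily because one-vertex graphs are exactly those simplices all of whose vertices in the associated graph are forced, but cleanest: $\V^1_{g,n}$ is characterized as the set of simplices $\zeta$ such that the full subcomplex of faces of $\zeta$ is itself of a fixed combinatorial type — I would instead argue the whole filtration intrinsically in one stroke.

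Here is the stroke I would actually execute, avoiding circularity. Define, for $\xi \in \Delta_{g,n}[p]$, the integer $\beta(\xi) \defeq 1 + \min\{\, p - |S| \ :\ S \subseteq [p],\ d_S\xi \text{ lies in no }\V^{i}_{g,n}\text{ with }\dim(\V^i_{g,n}) < \dim d_S\xi \,\}$ — no, the truly clean formulation: I claim $b^1(\G) = p + 1 - t(\xi)$, where $t(\xi)$ is the maximum cardinality of a subset $S \subseteq [p]$ such that the face map $d_S$ restricted to the star of $\xi$ behaves like contracting a forest, detectable as $\dim d_S \xi = p - |S|$ together with $|V|$ dropping by $|S|$; and $|V|$ dropping is detected by: $d_S\xi$ is a facet of $\V^{|V(\G)| - |S|}_{g,n}$. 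To break the apparent circularity with Proposition \ref{VerticesPreserved}, I would run an \textbf{induction on $i = 2g-2+n - (\text{codimension})$}, proving simultaneously that $\V^i_{g,n}$ is intrinsic and that $b^1$ is constant on $\Phi$-orbits: the base case is $\V^{2g-2+n}_{g,n} = \Delta_{g,n}$ itself (Proposition \ref{PureDimensional}), trivially intrinsic; and $\V^{i-1}_{g,n}$ is recovered from $\V^i_{g,n}$ as the subcomplex generated by all non-facet simplices of $\V^i_{g,n}$ that are faces of facets of $\V^i_{g,n}$ via contracting a non-loop edge — non-loop versus loop being distinguished because contracting a loop drops the dimension but \emph{not} the property of being a facet of $\V^i_{g,n}$, whereas contracting a non-loop edge lands in $\V^{i-1}_{g,n}$.

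The main obstacle, and the step I expect to require genuine care, is exactly this last point: giving a purely simplicial (star/link-theoretic) criterion that distinguishes, among the codimension-one faces $d_j\xi$ of a facet $\xi$ of $\Delta_{g,n}$, those coming from contracting a loop of $\G$ from those coming from contracting a non-loop edge — equivalently, detecting when $b^1$ drops versus when $|V|$ drops — without already knowing $b^1$ or $|V|$. The resolution I anticipate: a non-loop contraction $d_j\xi$ is still a facet of the appropriate subcomplex and its \emph{link} in $\Delta_{g,n}$ has dimension one less in a way that loop contraction does not, or more robustly, $d_j\xi$ arising from a loop is characterized by the existence of a "re-expansion" $\eta$ with $d_k \eta = d_j\xi$ for which $\eta$ has the same $\sS$-stabilizer pattern as $\xi$ — because un-contracting a loop at a weight-$w$ vertex, then the two new half-edges, forces a specific automorphism structure. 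I would phrase the final argument to sidestep the hardest version of this by using the induction above, where at stage $i$ one already knows $\V^i_{g,n}$ is intrinsic and pure of known dimension $g + i - 2$ (Proposition \ref{PureDimensional}), so "facet of $\V^i_{g,n}$" is intrinsic, and then a facet $[\G,\tau]$ of $\Delta_{g,n}$ has $b^1(\G) = g$ iff $[\G,\tau]$ is a facet of $\V^{2g-2+n}_{g,n}$ that is \emph{not} in the image of any $\V^i_{g,n}$ for $i < 2g-2+n$ — and since $\Phi$ permutes facets of each $\V^i_{g,n}$ among themselves by the inductive hypothesis, $b^1(\Phi\G) = b^1(\G)$ follows for facets, then for arbitrary $[\G,\tau]$ because every simplex is a face of a facet (purity, Proposition \ref{PureDimensional}) and $b^1$ is determined by which minimal $\V^i_{g,n}$ contains the simplex.
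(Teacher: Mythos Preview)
Your proposal has a genuine gap at exactly the point you flag as ``the main obstacle.'' The inductive step --- recovering $\V^{i-1}_{g,n}$ from $\V^i_{g,n}$ --- requires distinguishing, among codimension-one faces $d_j\xi$ of a facet $\xi$ of $\V^i_{g,n}$, those arising from contracting a loop (which stay in $\V^i_{g,n}\smallsetminus\V^{i-1}_{g,n}$) from those arising from contracting a non-loop edge (which land in $\V^{i-1}_{g,n}$). But this distinction is precisely the content of knowing $\V^{i-1}_{g,n}$, so you need an independent criterion. Your suggestions (link dimension, ``re-expansion'' stabilizer patterns) are not developed into arguments, and it is not clear any of them works: for instance, both types of face $d_j\xi$ sit as codimension-one faces of facets of $\V^i_{g,n}$, both have the same dimension $g+i-3$, and counting their co-faces in $\V^i_{g,n}$ gives graph-dependent answers with no obvious uniform separation. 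Your brief appeal to Proposition~\ref{VerticesPreserved} is, as you note, circular, since that proposition is equivalent to what you are proving. The final paragraph's claim that ``a facet $[\G,\tau]$ of $\Delta_{g,n}$ has $b^1(\G)=g$'' is vacuously true (every facet of $\Delta_{g,n}$ has $b^1=g$ by Proposition~\ref{PureDimensional}), so it does not launch the induction.

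The paper sidesteps this entirely by going bottom-up through $\V^1_{g,n}$ rather than top-down. It first pins down the single $0$-simplex $[\R^1_{g,n}]$ by a direct intrinsic characterization: it is the unique vertex of $\Delta_{g,n}$ that is a face of \emph{every} top-dimensional simplex (Lemma~\ref{NonsepPreserved}, proved by exhibiting a bridgeless maximal graph). From this it follows that $\Phi$ preserves bridges (Corollary~\ref{BridgesPreserved}). Then one exhibits a specific facet of $\Delta_{g,n}$ all of whose edges are either bridges or loops; contracting the bridges yields $[\R^g_{g,n}]$, and since $\Phi$ knows which indices are bridges, $\Phi$ fixes $[\R^g_{g,n}]$ and hence all $[\R^k_{g,n}]$. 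The result then follows because $b^1(\G)\geq k$ iff $[\R^k_{g,n}]$ is a face of $[\G,\tau]$.
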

To prove Proposition \ref{GenusPreserved}, we study those graphs in $\Gamma_{g, n}$ with one vertex. For $k \geq 0$, fix a graph $R^k$ with one vertex and $k$ loops. For any $n \geq 0$, there is a unique marking function $m^{k}_n: I_n \to V(R^k)$. For fixed $g ,n \geq 0$ and $0 \leq k \leq g$, we put
\[\R^k_{g, n} = (R^k, m^k_n, w^k_g), \]
where $w^k_g(v) = g - k$ for the unique vertex of $R^k$. Then $\R^k_{g, n}$ defines a $\Gamma_{g, n}$-object for all $(g, n)$ and $k$ with $3g - 3 + n > 0$ and $0 \leq k \leq g$. Since all edge-labellings of $\R^k_{g, n}$ lead to isomorphic objects of $\Gamma_{g, n}^\EL$, we have named a unique $(k - 1)$-simplex $[\R^k_{g, n}] \in \Delta_{g,n}[k - 1]$ for all $g, n, k$ in this range. Note that $\R^0_{g, n}$ is the final object of $\Gamma_{g, n}$ and $[\R^{0}_{g, n}]$ is the unique element of $\Delta_{g, n}[-1]$, so any $\Phi \in \Aut(\Delta_{g,n})$ preserves $[\R^{0}_{g, n}]$. 

We now show that when $g \geq 1$, the $0$-simplex $[\R^{1}_{g, n}]$ must also be fixed by $\Aut(\Delta_{g, n})$. For the proof, we require the notion of a \textsl{\textbf{bridge}}, which is an edge of a graph such that deleting it disconnects the graph.
\begin{lem}\label{NonsepPreserved}
	Suppose	$3g - 3 + n > 0$ and $g \geq 1$. Let $\Phi \in \Aut(\Delta_{g,n})$. Then $\Phi[\R_{g, n}^1] = [\R_{g, n}^1]$.
\end{lem}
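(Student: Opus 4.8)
The plan is to characterize $[\R^1_{g,n}]$ among the $0$-simplices of $\Delta_{g,n}$ by a property phrased purely in terms of the face maps $d_0, d_1$, which is therefore automatically preserved by every $\Phi \in \Aut(\Delta_{g,n})$. First I would dispose of the cases $2g - 2 + n < 3$: together with $g \geq 1$ and $3g - 3 + n > 0$ these are $(g,n) \in \{(1,1),(1,2),(2,0)\}$, and for each the conclusion is immediate from Example \ref{SmallExample}, since there $\Delta_{g,n}$ is either a single point or has trivial automorphism group. So I may assume $2g - 2 + n \geq 3$ from now on.

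The $0$-simplices of $\Delta_{g,n}$ are precisely the isomorphism classes of stable $n$-marked genus-$g$ graphs with exactly one edge. Since $g \geq 1$, such a graph is either $\R^1_{g,n}$, whose edge is a loop, or it has two vertices joined by a single edge, which is then a bridge; call the latter the \emph{bridge $0$-simplices}. For a $0$-simplex $v$, let $N(v)$ be the number of $1$-simplices $\sigma \in \Delta_{g,n}[1]$ with $d_0\sigma = d_1\sigma = v$ (informally, the number of ``loops at $v$'' in the $1$-skeleton). Because $\Phi$ is a bijection on $\Delta_{g,n}[1]$ commuting with $d_0$ and $d_1$, it maps the set counted by $N(v)$ injectively to the set counted by $N(\Phi v)$; applying the same to $\Phi^{-1}$ shows $N(\Phi v) = N(v)$ for all $v$. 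Hence it is enough to prove that $[\R^1_{g,n}]$ is the \emph{unique} $0$-simplex $v$ with $N(v) \geq 2$.

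For the inequality $N([\R^1_{g,n}]) \geq 2$: a $1$-simplex with both faces equal to $[\R^1_{g,n}]$ is represented by a two-edge stable graph both of whose edge-contractions yield $\R^1_{g,n}$. Examples are the double loop $\R^2_{g,n}$ (available as soon as $g \geq 2$) and the ``bananas'' consisting of two parallel edges between two weighted vertices of total weight $g-1$ carrying any stable distribution of the $n$ markings. Checking stability shows that at least two such simplices exist whenever $2g - 2 + n \geq 3$: when $g \geq 2$, the double loop together with one banana; when $g = 1$, and hence $n \geq 3$, the $2^{n-1} - 1 \geq 3$ bananas indexed by the unordered partitions of $I_n$ into two nonempty parts. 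For the inequality $N(v) \leq 1$ when $v$ is a bridge $0$-simplex: its underlying graph has $b^1 = 0$, so if $\sigma = [\G, \tau]$ satisfies $d_0\sigma = d_1\sigma = v$ then $b^1(\G/e) = 0$ for each of the two edges $e$ of $\G$. But if $\G$ contained a loop, contracting the other edge would leave that loop in place; and if $\G$ consisted of two parallel edges, either contraction would create a loop; in both cases some $\G/e$ has $b^1 \geq 1$, a contradiction. Thus $\G$ is a tree with two edges, i.e. a path on three vertices, and imposing that both of its edge-contractions be isomorphic to the fixed bridge graph $v$ forces the two end vertices to have equal weight and no markings and all markings to sit at the middle vertex; the weights are then determined by $v$, so at most one such $\sigma$ exists.

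I expect the bookkeeping in the bound $N(v) \leq 1$ to be the only real obstacle: ruling out loops in $\G$ via the behaviour of $b^1$ under edge-contraction (this is exactly where the notion of a bridge enters), and then checking that the isomorphism type of $v$ pins down the ``symmetric path'' $\G$ uniquely when it exists. The lower bound is a routine enumeration once the relevant self-gluings are listed. Granting both bounds, $[\R^1_{g,n}]$ is the unique $0$-simplex $v$ with $N(v) \geq 2$, hence $\Phi[\R^1_{g,n}] = [\R^1_{g,n}]$ for every $\Phi \in \Aut(\Delta_{g,n})$.
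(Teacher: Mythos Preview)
Your argument is correct, and it takes a genuinely different route from the paper's. The paper characterizes $[\R^1_{g,n}]$ as the unique vertex which is a face of \emph{every} top-dimensional simplex of $\Delta_{g,n}$: one direction is immediate (any graph with $b^1 \geq 1$ contracts to $\R^1_{g,n}$), and for the other the paper simply exhibits a bridgeless maximal graph, so that $[\R^1_{g,n}]$ is its only $0$-face. By contrast, you work entirely in the $1$-skeleton and characterize $[\R^1_{g,n}]$ as the unique $0$-simplex $v$ with $N(v) \geq 2$. Your lower bound is a short enumeration of bananas and double loops; your upper bound $N(v)\leq 1$ for bridge vertices comes down to the observation that any two-edge tree both of whose contractions are isomorphic to a fixed bridge graph must be the symmetric path with bare ends, which is then pinned down uniquely by $v$ (your case analysis here is correct: the alternative matching forces the middle vertex to have weight $0$, valence $2$, and no markings, violating stability). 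One small trade-off: the paper's argument is uniform over all $(g,n)$ with $3g - 3 + n > 0$ and $g \geq 1$, whereas your threshold $N(v)\geq 2$ genuinely fails for $(g,n)=(1,2)$ (there $N([\R^1_{1,2}])=1$), so your appeal to Example~\ref{SmallExample} for the small cases is essential rather than merely convenient.
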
 
\begin{proof}
We shall prove that $[\R^1_{g, n}]$ is the unique vertex of $\Delta_{g,n}$ which is a face of every single top-dimensional simplex. In fact $[\R^1_{g, n}]$ is a face of any simplex $[\G, \tau] \in \Delta_{g, n}[p]$ with $b^1(\G) \geq 1$: we can make a contraction $\G \to \R^1_{g, n}$ by contracting all edges of $\G$ besides some fixed edge which is contained in a cycle. Thus $[\R^1_{g, n}]$ is a face of all top dimensional simplices of $\Delta_{g,n}$. To show that it is the only vertex with this property, it suffices to exhibit a top-dimensional simplex of $\Delta_{g,n}$ whose only $0$-dimensional face is $[\R^1_{g, n}]$. For this it suffices to exhibit a $\Gamma_{g,n}$-object $\G$  which has no bridges, and such that all the vertices $v\in V(\G)$ have $\val(v) + |m^{-1}(v)| = 3$. When $g = 1$, we can take $\G$ to be an $n$-cycle, such that each vertex of $\G$ supports one marking. When $g \geq 2$, we take the $\Gamma_g$-object $\G$ of Figure \ref{Bridgeless}, and choose some distribution of $n$ additional vertices, each supporting a unique marking.
\begin{figure}[h]
	\centering
	\includegraphics[scale=1]{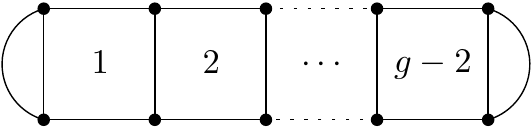}
	\caption{A maximal $\Gamma_g$-object with $3g - 3$ edges and no bridges.}
	\label{Bridgeless}
\end{figure}	
\end{proof}
As a corollary of Lemma \ref{NonsepPreserved}, we have that any $\Phi \in \Aut(\Delta_{g,n})$ preserves bridges.
\begin{cor}\label{BridgesPreserved}
Let $\Phi \in \Aut(\Delta_{g,n})$, and suppose $[\G, \tau] \in \Delta_{g,n}[p]$. Then $\tau^{-1}(i) \in E(\G)$ is a bridge of $\G$ if and only if $(\Phi\tau)^{-1}(i) \in E(\Phi\G)$ is a bridge of $\Phi \G$.
\end{cor}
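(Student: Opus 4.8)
The plan is to deduce this directly from Lemma~\ref{NonsepPreserved}, by showing that the bridge property of an edge can be read off from a single $0$-dimensional face of $[\G,\tau]$. First dispose of the case $g = 0$: here both $\G$ and $\Phi\G$ have genus $0$ by Proposition~\ref{GenusPreserved}, hence are trees, so every edge of each is a bridge and there is nothing to prove. So assume $g \geq 1$, and write $[\Phi\G, \Phi\tau] = \Phi[\G, \tau]$.

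The heart of the argument is a combinatorial characterization of bridges. Fix $i \in [p]$ and set $e = \tau^{-1}(i) \in E(\G)$. The $0$-simplex $d_{[p]\smallsetminus\{i\}}[\G,\tau]$ is represented by the one-edge graph $\G'$ obtained from $\G$ by contracting every edge except $e$; since contractions preserve genus, $\G'$ has genus $g$, and since a one-edge graph has a unique edge-labelling up to isomorphism, $d_{[p]\smallsetminus\{i\}}[\G,\tau]$ is determined by the isomorphism class of $\G'$. If $e$ is not a bridge of $\G$, then $e$ lies on a cycle, and contracting the remaining edges keeps $e$ on a cycle, so the unique edge of $\G'$ is a loop; a genus-$g$ graph with one vertex and one loop is necessarily $\R^1_{g,n}$, so $d_{[p]\smallsetminus\{i\}}[\G,\tau] = [\R^1_{g,n}]$. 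Conversely, if $e$ is a bridge of $\G$, it remains a bridge after contracting any other edge $f \neq e$ (contracting $f$ cannot reconnect $\G \smallsetminus e$), so $\G'$ has two vertices joined by $e$, whence $d_{[p]\smallsetminus\{i\}}[\G,\tau] \neq [\R^1_{g,n}]$. Thus
\[ \tau^{-1}(i) \text{ is a bridge of } \G \iff d_{[p]\smallsetminus\{i\}}[\G,\tau] \neq [\R^1_{g,n}]. \]

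With this in hand the corollary is formal. Since $\Phi$ is a natural isomorphism it commutes with every face map (diagram~\eqref{SimplicialStructure}, and $d_S$ is a composite of the $d_i$), so $d_{[p]\smallsetminus\{i\}}\Phi[\G,\tau] = \Phi\bigl(d_{[p]\smallsetminus\{i\}}[\G,\tau]\bigr)$. By Lemma~\ref{NonsepPreserved} $\Phi$ fixes $[\R^1_{g,n}]$, and being a bijection on $0$-simplices it sends $d_{[p]\smallsetminus\{i\}}[\G,\tau]$ to $[\R^1_{g,n}]$ exactly when that simplex already equals $[\R^1_{g,n}]$. Applying the displayed equivalence to $[\G,\tau]$ and to $\Phi[\G,\tau] = [\Phi\G,\Phi\tau]$ now chains together to give: $\tau^{-1}(i)$ is a bridge of $\G$ iff $(\Phi\tau)^{-1}(i)$ is a bridge of $\Phi\G$.

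The only step requiring genuine (but routine) verification is the combinatorial claim: checking from the definition of edge-contraction that the surviving edge of $\G'$ is a loop precisely when $e$ lay on a cycle of $\G$ — equivalently that contracting an edge $f\neq e$ neither destroys the bridge property of $e$ nor places $e$ onto a new cycle — and that a one-vertex one-loop graph of genus $g$ with $n$ markings can only be $\R^1_{g,n}$. Everything else follows from naturality of $\Phi$ and Lemma~\ref{NonsepPreserved}.
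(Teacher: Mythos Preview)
Your proof is correct and follows essentially the same route as the paper: characterize the bridge indices of $[\G,\tau]$ as exactly those $i$ with $d_{[p]\smallsetminus\{i\}}[\G,\tau]\neq[\R^1_{g,n}]$, then invoke Lemma~\ref{NonsepPreserved} together with naturality of $\Phi$. The paper's argument is the same, just more terse.

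One small point: your citation of Proposition~\ref{GenusPreserved} in the $g=0$ case is unnecessary and, strictly speaking, circular, since that proposition is proved \emph{after} (and using) this corollary. You don't need it anyway: when $g=0$, every object of $\Gamma_{0,n}$ has all vertex weights zero and hence $b^1=0$, so both $\G$ and $\Phi\G$ are trees by definition and every edge is a bridge. Just delete the reference.
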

\begin{proof}
	Let $\mathcal{B} \subseteq [p]$ be the set of indices corresponding to bridges of $\G$. Then
	\[\mathcal{B} = \{k \in [p] \mid  d_{[p] \smallsetminus \{k\}}[\G, \tau] \neq [\R^1_{g, n}]\}. \]
	The claim is now immediate from Lemma \ref{NonsepPreserved}.
\end{proof}
We require one final observation before the proof of Proposition \ref{GenusPreserved}.
\begin{lem}
	Suppose $3g - 3 + n > 0$ and $g \geq 1$. Then there exists a $\Gamma_{g, n}$-object $\G$ with $3g - 3 + n$ edges, such that
	\begin{itemize}
	 \item $b^1(\G) = g$, and
	 \item every edge of $\G$ is either a loop or a bridge.
	\end{itemize}
\end{lem}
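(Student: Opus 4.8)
The plan is to establish the claim by an explicit construction, after first recording the shape any such $\G$ must have. If $\G \in \mathrm{Ob}(\Gamma_{g, n})$ has $3g - 3 + n$ edges, then $\G$ is a facet of $\Delta_{g, n}$, so by the argument in the proof of Proposition \ref{PureDimensional} every vertex $v$ of $\G$ has $w(v) = 0$ and $\val(v) + |m^{-1}(v)| = 3$; in particular $b^1(\G) = g$ is automatic, and the real content is the second bullet. I would build a $\G$ in which, in addition, every non-loop edge is a bridge: deleting the loops then leaves a connected acyclic graph, so the non-loop edges form a spanning tree with $2g - 3 + n$ edges on the $2g - 2 + n$ vertices, leaving exactly $g$ loops. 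Hence the target configuration is: a spanning tree, a loop attached at $g$ of its vertices --- each of these then necessarily a leaf, since $1 + 2 = 3$ --- and $n$ markings placed so that every loopless vertex $v$ carries exactly $3 - \val(v)$ of them.

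Here is the construction, by cases. \emph{Case $g \geq 2$, $n = 0$.} Take a trivalent tree $T$ with exactly $g$ leaves, meaning every non-leaf vertex of $T$ has degree $3$; such a tree exists for all $g \geq 2$ (for $g = 2$ it is a single edge, and one passes from $g$ to $g + 1$ leaves by subdividing an edge and attaching a new leaf at the new vertex), and it has $g - 2$ internal vertices and $2g - 3$ edges. Let $\G$ consist of $T$ with one loop attached at each leaf. Then every leaf has valence $1 + 2 = 3$, every internal vertex has valence $3$, all weights vanish, $|E(\G)| = (2g - 3) + g = 3g - 3$, and $b^1(\G) = g$. \emph{Case $g \geq 2$, $n \geq 1$.} Start from the graph just built, which has $2g - 3 \geq 1$ bridges, and perform $n$ successive edge subdivisions: at each step pick any bridge $\{a, b\}$, replace it by a path $a - x - b$, and let the new vertex $x$ support exactly one marking. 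Each step adds one vertex, one edge, and one marking and leaves $b^1$ unchanged, and $x$ has valence $2$ plus one marking. \emph{Case $g = 1$ (so $n \geq 1$).} Let $\G$ be the path $v_0 - v_1 - \cdots - v_{n - 1}$ with a loop at $v_0$, where $v_0$ carries no marking, each of $v_1, \ldots, v_{n - 2}$ carries one marking, and $v_{n - 1}$ carries two markings; for $n = 1$ this reads as the single vertex $v_0$ with a loop and one marking, and for $n = 2$ as $v_0 - v_1$ with a loop at $v_0$ and two markings at $v_1$.

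What remains is routine verification: in every case each vertex has weight $0$ and $\val(v) + |m^{-1}(v)| = 3$, so $\G$ is stable; $|E(\G)| = 3g - 3 + n$ and $b^1(\G) = g$; and every edge is a loop or a bridge --- the $g$ loops stay loops, every tree edge and every piece of a subdivided tree edge stays a bridge, and no subdivision introduces a cycle, so no non-loop edge lies on a cycle. I expect the only thing needing care to be the bookkeeping in the degenerate cases (the trivalent tree is a single edge when $g = 2$, so the $n = 0$ graph is the dumbbell; and the $g = 1$ path must be interpreted correctly for $n \leq 2$), together with the elementary fact that subdividing a bridge yields two bridges --- there is no real conceptual obstacle beyond getting these checks right.
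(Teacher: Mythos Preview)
Your proposal is correct and follows the same strategy as the paper: exhibit an explicit maximal stable graph whose edges are all loops or bridges. The paper's proof simply points to figures for the small cases $(1,1),(1,2),(2,0)$ and to a further figure for the general construction, whereas you spell out the construction verbally (a trivalent tree with loops at the leaves when $n=0$, then successive bridge subdivisions carrying one marking each for $n\geq 1$, and a separate path-with-loop construction for $g=1$); the two are the same idea, with your version supplying the bookkeeping the paper leaves to the pictures.
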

\begin{proof}
	When $(g, n) = (1, 1), (1, 2), (2, 0)$, this is seen in Figure \ref{FunctorExample}. It is easy to construct such graphs in general: Figure \ref{TopDims} finishes the proof.
	\begin{figure}[h]
		\centering
		\includegraphics[scale=1]{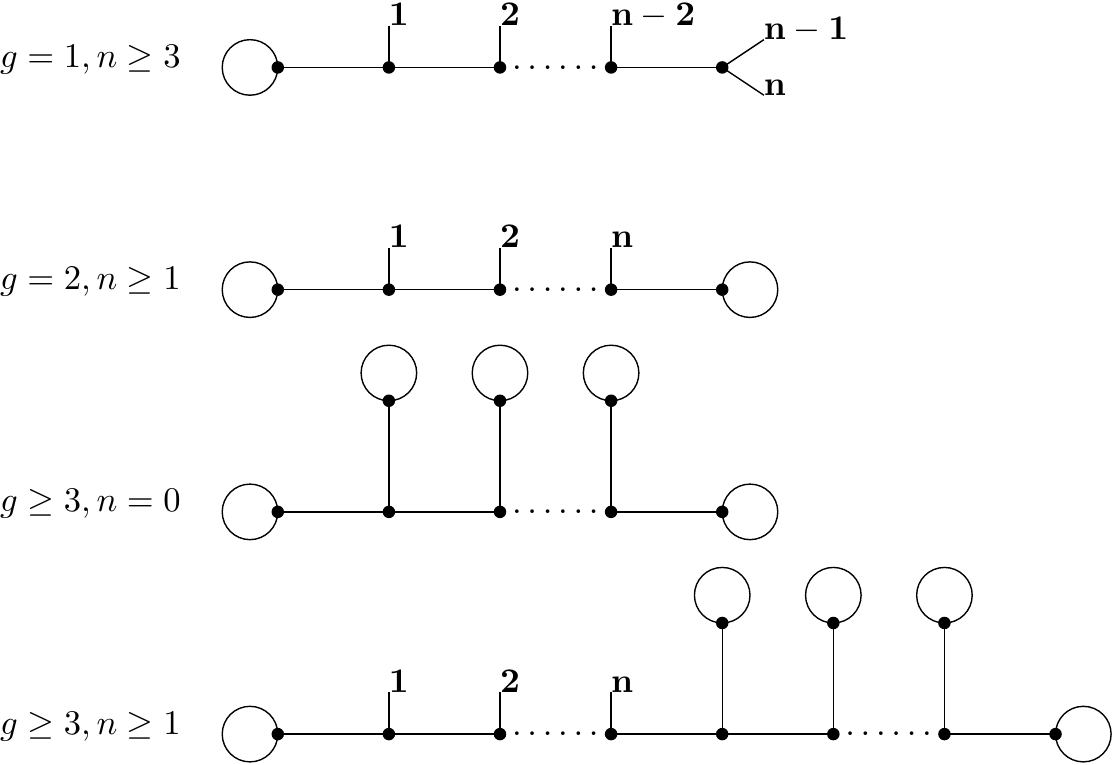}
		\caption{Maximal graphs in $\Gamma_{g, n}$ with only bridges or loops as edges.}
		\label{TopDims}
	\end{figure}	
\end{proof}	
We are now ready to prove Proposition \ref{GenusPreserved}.
\begin{proof}[Proof of Proposition \ref{GenusPreserved}]
When $g = 0$, we have $b^1(\G) = 0$ for all graphs of $\Gamma_{0, n}$, so the proposition holds trivially in this case. When $g \geq 1$, observe that a $\Gamma_{g,n}$-object $\G$ satisfies $b^1(\G) \geq k$ if and only if there exists a morphism $\G \to \R^{k}_{g, n}$. Therefore it suffices to prove that 
\[\Phi[\R_{g, n}^k] = [\R_{g, n}^k] \]
for all $1 \leq k \leq g$: if $\Phi$ fixes the simplex $[\R_{g, n}^k]$, then it must fix setwise its \textsl{simplicial star}, i.e. the set of all simplices having it as a face. Moreover, if
\[\Phi[\R_{g, n}^g] = [\R_{g, n}^g], \]
then $\Phi[\R_{g, n}^k] = [\R_{g, n}^k]$ for all $1 \leq k \leq g$, since $[\R_{g, n}^k]$ is a face of $[\R_{g, n}^g]$. Pick a stable graph $\G$ such that $\G$ has $3g - 3 + n$ edges, all of which are either loops or bridges. Then $b^1(\G) = g$ since $\G$ is maximal, so $\G$ has precisely $g$ loops and $2g - 3 +n$ bridges. Let $\tau : E(\G) \to [3g - 4 + n]$ be any edge-labelling, and let $\mathcal{B} \subseteq [p]$ be the set of indices of bridges in $\G$. Then $\mathcal{B}$ is also the set of indices of bridges in $\Phi \G$ by Corollary \ref{BridgesPreserved}, so we have
 \[d_{\mathcal{B}}[\G, \tau] = d_{\mathcal{B}}[\Phi\G, \Phi\tau] = [\R_{g, n}^g].\]
Since
\[d_{\mathcal{B}}[\Phi\G, \Phi\tau] = \Phi(d_{\mathcal{B}}[\G, \tau]) = \Phi[\R_{g, n}^g], \]
the proof is complete.
\end{proof}	
Proposition \ref{GenusPreserved} implies Proposition \ref{VerticesPreserved}, so we get a restriction homomorphism \[\rho^{i}_{g, n}: \Aut(\Delta_{g, n}) \to \Aut(\V^{i}_{g, n})\] for all $i \geq 1$. In the next section we will show that this map is an embedding when $i = 2$.
Before proceeding, we record an important corollary of Proposition \ref{VerticesPreserved} which we will use several times throughout the paper. We adopt the convention that a $1$-cycle is a loop, while a $2$-cycle is given by a pair of parallel nonloop edges.
\begin{cor}\label{CyclesPreserved}
Let $[\G, \tau] \in \Delta_{g, n}[p]$ and $\Phi \in \Aut(\Delta_{g, n})$. Then, for all $k \geq 1$, a subset $S \subseteq [p]$ with $|S| = k$ corresponds via $\tau$ to a $k$-cycle in $\G$ if and only if it corresponds via $\Phi\tau$ to a $k$-cycle in $\Phi \G$; that is, the bijection 
\[\Phi_{\G}: E(\G) \to E(\Phi \G) \]
of Notation \ref{PhiG} preserves all cycles.
\end{cor}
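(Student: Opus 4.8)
The plan is to give a purely numerical characterization — in terms of the first Betti numbers of the graphs underlying the faces of $[\G,\tau]$ — of exactly which subsets $S\subseteq[p]$ index a $k$-cycle of $\G$, and then to observe that this characterization is manifestly preserved by $\Phi$ using Proposition~\ref{GenusPreserved} together with $\sS$-equivariance.

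\textbf{Step 1: a Betti-number criterion for cycles.} First I record the elementary fact that for any subset $T\subseteq E(\G)$ one has $b^1(\G)-b^1(\G/T)=|T|-r(T)$, the nullity of $T$ in the graphic matroid of $\G$; this is immediate from $b^1=|E|-|V|+1$ and the effect of edge-contraction on $|E|$ and $|V|$. In particular the edges in $T$ span a cycle if and only if $b^1(\G/T)<b^1(\G)$. From this I deduce the key claim: writing $d_T[\G,\tau]=[\G/\tau^{-1}(T),\ldots]$ for $T\subseteq[p]$, a subset $S\subseteq[p]$ with $|S|=k$ corresponds via $\tau$ to a $k$-cycle of $\G$ if and only if
\[ b^1\bigl(\G/\tau^{-1}(S)\bigr)<b^1(\G)\quad\text{and}\quad b^1\bigl(\G/\tau^{-1}(S\smallsetminus\{j\})\bigr)=b^1(\G)\ \text{ for every }j\in S. \]
Indeed, the displayed conditions say precisely that $\tau^{-1}(S)$ is inclusion-minimal among edge sets that span a cycle, and a minimal such edge set is the edge set of a single cycle (it contains a cycle $C$, and minimality forces $\tau^{-1}(S)=C$); conversely a $k$-cycle is such a minimal set. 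The case $k=1$ is degenerate and recovers "a $1$-cycle is a loop."

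\textbf{Step 2: transporting the criterion across $\Phi$.} Since $\Phi$ commutes with each face map $d_i$ by diagram~(\ref{SimplicialStructure}), it commutes with every iterated face map $d_T$, so $\Phi\bigl(d_T[\G,\tau]\bigr)=d_T[\Phi\G,\Phi\tau]$. The graph underlying the left side is $\Phi(\G/\tau^{-1}(T))$ in the sense of Notation~\ref{PhiG}, and that underlying the right side is $\Phi\G/(\Phi\tau)^{-1}(T)$; hence these graphs coincide, and applying Proposition~\ref{GenusPreserved} to the simplex $d_T[\G,\tau]$ gives $b^1\bigl(\Phi\G/(\Phi\tau)^{-1}(T)\bigr)=b^1\bigl(\G/\tau^{-1}(T)\bigr)$ for every $T\subseteq[p]$; taking $T=\varnothing$ also gives $b^1(\Phi\G)=b^1(\G)$. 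Therefore the Betti-number conditions of Step~1 hold for $(\G,\tau)$ if and only if they hold for $(\Phi\G,\Phi\tau)$, which is exactly the assertion of the corollary. The final sentence then follows because, by the definition of $\Phi\tau$, the bijection $\Phi_\G\colon E(\G)\to E(\Phi\G)$ carries $\tau^{-1}(S)$ to $(\Phi\tau)^{-1}(S)$, so it carries $k$-cycles to $k$-cycles.

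\textbf{Expected difficulty.} There is no serious obstacle: the only point requiring care is in Step~1, namely checking that it is contraction of the edges \emph{in} $S$ (not of the complementary edges) whose effect on $b^1$ detects cycles among $S$, and that inclusion-minimality need only be tested against the subsets $S\smallsetminus\{j\}$. Once Step~1 is in place, Step~2 is a formal consequence of $\Phi$ preserving $b^1$ and commuting with face maps.
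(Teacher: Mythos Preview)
Your proof is correct. Both your argument and the paper's rest on the same input, Proposition~\ref{GenusPreserved}, but they package the combinatorics differently. The paper proceeds by induction on $k$: the base case $k=1$ is the observation that $\ell$ labels a loop if and only if $d_\ell[\G,\tau]$ stays in $\V^{N}_{g,n}$, and the inductive step uses the characterization that $C$ is a $k$-cycle if and only if $c_e(C\smallsetminus\{e\})$ is a $(k-1)$-cycle in $\G/e$ for every $e\in C$. You instead give a one-shot matroidal criterion, recognizing a $k$-cycle as a minimal dependent set via the nullity formula $b^1(\G)-b^1(\G/T)=|T|-r(T)$, and then invoke Proposition~\ref{GenusPreserved} for every face $d_T[\G,\tau]$ at once. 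Your route avoids the induction and is arguably cleaner and more conceptual; the paper's route is more elementary in that it never names the graphic matroid. Either way the content is the same: cycles are detected by drops in $b^1$ under contraction, and $\Phi$ preserves $b^1$.
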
	
\begin{proof}
	Suppose $|V(\G)| = N$. The proof is by induction on $k$.  When $k = 1$, the claim is that an index $\ell \in [p]$ labels a loop in $\G$ if and only if $\ell$ labels a loop in $\Phi \G$. Observe that $\ell$ is a loop index if and only if $d_\ell[\G, \tau]$ is in $\V^{N}_{g, n}$, so the claim is a direct consequence of Proposition \ref{VerticesPreserved}. The inductive step follows from the fact that $\Phi d_j = d_j \Phi$ for all $j \in [p]$, together with the observation that, for $k \geq 2$, a subset $C \subseteq E(\G)$ forms a $k$-cycle in $\G$ if and only if $c_e(C \smallsetminus \{e\} ) \subseteq E(\G/e)$ forms a $(k - 1)$-cycle of $\G/e$ for all $e \in C$.
\end{proof}	

\section{A reconstruction algorithm for edge-labelled stable graphs}\label{RestrictionInjects}
In this section we prove Theorem \ref{TwoVertex}, which states that the restriction map
\[\rho^{2}_{g, n}: \Aut(\Delta_{g, n}) \to \Aut(\V^{2}_{g, n}) \]
is an injection. The proof relies on an explicit procedure for reconstructing top-dimensional simplices $[\G, \tau]$ of $\V^{i}_{g, n}$ from their ordered list of faces which lie in $\V^{i - 1}_{g, n}$, when $i \geq 3$. Graph-theoretically, our proof amounts to an algorithm for reconstructing an edge-labelled stable pair $(\G, \tau)$ satisfying $b^1(\G) = g$ and $|V(\G)| \geq 3$ from its \textsl{nonloop contraction deck}, as in Definition \ref{GlobalInvariants} below. First, we set some notation.

Given a $\Gamma_{g, n}^{\EL}$-object $(\G, \tau : E(\G) \to [p])$ and $j \in [p]$, we set $e_j\defeq \tau^{-1}j$ and put $c_j : \G \to \G/e_j$ for the contraction of edge $e_j$. We let $\tau_j: E(\G/e_j) \to [p - 1]$ be the unique edge-labelling fitting into the commutative square
\[\begin{tikzcd}
&E(\G) \arrow[r, "\tau"] & \lbrack p \rbrack\\
&E(\G/e_j) \arrow[r, "\tau_j"] \arrow[u, "c_j^*"] &\lbrack p - 1 \rbrack \arrow[u, "\delta^j"]
\end{tikzcd}. \] That is, $c_j = c_{\{j\}}$ and $\tau_j = \tau_{\delta^j}$ in the notation of Diagram \ref{FaceMaps}, so in simplicial terms we have $d_j[\G, \tau] = [\G/e_j, \tau_j]$. We now define what is, for the purpose of this section, the fundamental invariant of a pair $(\G, \tau)$ in $\Gamma_{g, n}^{\EL}$.
\begin{defn}[Nonloop contraction deck]\label{GlobalInvariants}
	Let $(\G, \tau: E(\G) \to [p])$ be an object of $\Gamma_{g, n}^{\mathrm{EL}}$. The \textsl{\textbf{nonloop contraction deck}} of $(\G, \tau)$, denoted $\D^\G_\tau$, is an ordered list of $\Gamma_{g, n}^{\EL}$-objects:
	\[\D^\G_\tau = \{ ((\G/e_j, \tau_j), j) \mid e_j \text{ is not a loop of }\G \}. \]
	Thus $\D^\G_{\tau}$ is the list of nonloop contractions of $(\G, \tau)$, where elements are indexed according to the edge which was contracted.
\end{defn}

 Given two lists $\D_1 = \{(\G_i, i)\}_{i \in J_1}$,  $\D_2 = \{(\H_i, i)\}_{i \in J_2}$ of $\Gamma_{g, n}^{\EL}$-objects indexed by subsets $J_1, J_2 \subseteq [p]$, we write
\[\D_1 \cong \D_2 \]
if and only if:
\begin{itemize}
	\item $J_1 = J_2$, and 
	\item $\G_i \cong \H_i$ for all $i \in J_1$.
\end{itemize}
The proof of Theorem \ref{TwoVertex} relies on the main technical result of this section: up to isomorphism, an edge-labelled pair $(\G, \tau)$ is determined uniquely by its nonloop contraction deck $\D^{\G}_\tau$, assuming that $b^1(\G) = g$ and $|V(\G)| \geq 3$.
\begin{thm}\label{Reconstruct}
	Suppose $(\G, \tau)$ is an edge-labelled pair in $\Gamma_{g, n}^{\EL}$ satisfying $b^1(\G) = g$ and $|V(\G)| \geq 3$. If another edge-labelled pair $(\G', \tau')$ satisfies
	\[\mathcal{D}^{\G'}_{\tau'} \cong \mathcal{D}^\G_{\tau},\]
	then there exists an isomorphism of pairs $(\G, \tau) \cong (\G', \tau')$.
\end{thm}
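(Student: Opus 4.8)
The plan is to recover the edge-labelled pair $(\G, \tau)$ from its nonloop contraction deck $\D^\G_\tau$ in two stages: first reconstruct the combinatorial structure of $\G$ together with the labelling $\tau$ on nonloop edges, and then pin down the loops. Since $b^1(\G) = g$, every vertex of $\G$ has weight zero, so $\G$ is completely determined by its underlying graph (with markings); there is no hidden weight data to recover. The hypothesis $|V(\G)| \geq 3$ guarantees that $\G$ has at least two nonloop edges that, when contracted, still leave a graph with $\geq 2$ vertices, which is what makes the inductive bookkeeping go through.

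First I would handle the nonloop edges. Fix two distinct indices $j, k \in [p]$ labelling nonloop edges $e_j, e_k$ of $\G$; the deck gives us $(\G/e_j, \tau_j)$ and $(\G/e_k, \tau_k)$. The key local observation is that $\G$ can be reconstructed from the pair of contractions $\G/e_j$ and $\G/e_k$ together with the knowledge of how they are glued: contracting $e_j$ in $\G$ identifies the endpoints of $e_j$ to a vertex $\hat v_j$, and the "un-contraction" is determined by specifying how to split the half-edges at $\hat v_j$ into the two sides of $e_j$. That splitting data is exactly visible after also contracting $e_k$: the double contraction $\G/\{e_j, e_k\}$ is a common face of both $(\G/e_j, \tau_j)$ and $(\G/e_k, \tau_k)$, obtainable from the deck via the face maps $d_i$, and comparing the two ways of recovering $\G/\{e_j,e_k\}$ locates $e_k$ inside $\G/e_j$ and $e_j$ inside $\G/e_k$. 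Running this over all pairs of nonloop indices — and using $|V(\G)| \geq 3$ to ensure enough contractions remain nondegenerate — reconstructs $\G$ minus its loops, together with the restriction of $\tau$ to nonloop edges. I would phrase this as: the isomorphism type of $(\G \setminus \text{loops}, \tau|_{\text{nonloops}})$ is determined by $\D^\G_\tau$.

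Next I would recover the loops. Loops behave simply under nonloop contractions: if $e_i$ is a loop, it remains a loop of $\G/e_j$ for every nonloop $e_j$, attached at the vertex that is the image of its base vertex. So from any single entry $(\G/e_j, \tau_j)$ of the deck we read off, for each index $i$ labelling a loop of $\G$, the vertex of $\G/e_j$ carrying that loop; pulling back along $c_j$ and cross-referencing over varying $j$ locates the base vertex of each loop in the already-reconstructed graph $\G \setminus \text{loops}$. This determines $\G$ and $\tau$ on the nose (up to isomorphism of pairs). The only subtlety is the edge case where $\G$ has very few nonloop edges — e.g. if $\G$ consists of a few vertices connected by a path of bridges plus some loops — but here $|V(\G)| \geq 3$ forces at least two nonloop edges, and one checks directly that two nonloop contractions already suffice to locate every loop's base vertex and to reconstruct the tree/graph of nonloop edges.

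I expect the main obstacle to be the first stage: making precise, and checking in all configurations, that two nonloop contractions $\G/e_j$, $\G/e_k$ together with their common double-contraction genuinely determine the local gluing at $\hat v_j$. The difficulty is that contracting $e_j$ loses the partition of the half-edges at $\hat v_j$, and one must argue that this partition is recoverable — which can fail for a single contraction but is rescued by comparing with a second one, provided the two edges $e_j, e_k$ are positioned so that $e_k$ is not "absorbed" into the ambiguous vertex. Isolating exactly which pairs $(j,k)$ are usable, and verifying that enough usable pairs exist whenever $|V(\G)| \geq 3$, is the technical heart; everything after that is a gluing argument of the kind already implicit in the definition of the face maps in Diagram \ref{FaceMaps}.
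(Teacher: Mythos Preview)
Your strategy is different from the paper's and, as you yourself anticipate, the hard part is exactly where your sketch goes soft. The paper does not attempt to reconstruct directly from a pair of contractions; instead it first extracts from $\D^\G_\tau$ two auxiliary invariants---the total cycle set $\O^\G_\tau$ and then the full intersection matrix $\Q^\G_\tau$ recording how many vertices each pair of edges (or markings) share---and only afterwards performs the uncontraction. The bulk of the work is showing that $\Q^\G_\tau$ is computable from the deck: for $|V(\G)| \geq 5$ this follows from a minimum formula over all nonloop contractions, but for $|V(\G)| \in \{3,4\}$ the paper has to enumerate seven exceptional ``full subgraph'' types $\mathbb{E}_0,\ldots,\mathbb{E}_6$ and give a separate argument for each, making essential use of both the stability condition and the hypothesis $b^1(\G)=g$.

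Your proposal's gap is that ``comparing the two ways of recovering $\G/\{e_j,e_k\}$'' does not recover the splitting data at $\hat v_j$. You already know where $e_k$ sits inside $\G/e_j$---the edge-labelling $\tau_j$ tells you---so the double contraction adds nothing there. What you need is the partition of the half-edges at $\hat v_j$ into the two sides of $e_j$, and for this you would want a nonloop contraction $\G/e_k$ in which $e_j$ still has the same neighbourhood it had in $\G$; that requires $e_k$ to be \emph{non-adjacent} to $e_j$, which need not exist when $|V(\G)|\leq 4$. The paper even exhibits (Figure~\ref{dNotEnough}) non-isomorphic edge-labelled graphs with identical decks once one drops stability or $b^1(\G)=g$, so there is no purely combinatorial shortcut here: any correct proof must somewhere invoke both hypotheses in the small-vertex cases. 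Your sketch never does, and the place where it would have to---your ``technical heart''---is precisely the casework the paper carries out via $\Q^\G_\tau$ and the $\mathbb{E}_i$.
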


Theorem \ref{Reconstruct} is the answer to an easier version of a question with some history in graph theory: given an (unlabelled) graph $G$, when does the deck of all one-edge contractions determine $G$ up to isomorphism? The still open \textsl{contraction reconstruction conjecture} posits that this is always the case for simple graphs $G$ with at least four edges. For a survey of this conjecture and partial results, see the PhD thesis of Antoine Poirier ~\cite{Poirier}.
	
We now indicate how Theorem \ref{TwoVertex} is proven, using the above Theorem \ref{Reconstruct}. 
\begin{proof}[Proof of Theorem \ref{TwoVertex}]
	Suppose given an automorphism $\Phi \in \Aut(\Delta_{g,n})$ such that
	\[\Phi|_{\V^2_{g, n}} = \mathrm{Id}|_{\V^2_{g, n}}, \]
	in order to show that $\Phi = \mathrm{Id}$. For this it suffices to show that
	\[\Phi|_{\V^{i}_{g, n}} = \mathrm{Id}|_{\V^i_{g, n}} \] for all $i \geq 2$. We prove this claim by induction, the case $i = 2$ being the base case. So, suppose that 
	\[\Phi|_{\V^{i}_{g, n}} = \mathrm{Id}|_{\V^i_{g, n}}\]
	for some $i \geq 2$, in order to show that
	\[\Phi|_{\V^{i + 1}_{g, n}} = \mathrm{Id}|_{\V^{i + 1}_{g, n}}. \]
	Since the subcomplex $\V^{i + 1}_{g, n}$ is pure of dimension $g + i - 1$, it is enough to show that $\Phi[\G, \tau] = [\G, \tau]$ for any $[\G, \tau] \in \V^{i + 1}_{g, n}[g + i - 1]$, i.e. those pairs $(\G, \tau)$ in $\Gamma^{\EL}_{g, n}$ satisfying $b^1(\G) = g$ and $|V(\G)| = i + 1$. Put $\Phi[\G, \tau] = [\Phi \G, \Phi \tau]$. We define
	\[\Phi(\D^\G_{\tau}) = \{((\Phi (\G/e_{j}), \Phi(\tau_j)), j) \mid e_j \text{ is a nonloop edge of }\G \}. \]
	As $\Phi$ preserves loops (Corollary \ref{CyclesPreserved}) and we have $d_j\Phi = \Phi d_j$ for all $j$, we get an equivalence $\Phi(\D^\G_{\tau}) \cong \D^{\Phi\G}_{\Phi\tau}$. Since each $[\G/e_j, \tau_j]$ is a simplex in $\V^{i}_{g, n}$, our inductive hypothesis gives that $\D^{\G}_{\tau} \cong \Phi(\D^{\G}_{\tau})$. Then $(\G, \tau) \cong (\Phi\G, \Phi\tau)$ by Theorem \ref{Reconstruct}, meaning $[\G, \tau] = [\Phi \G, \Phi \tau]$, and the proof is complete.
\end{proof}
The remainder of this section is devoted to the proof of Theorem \ref{Reconstruct}, which will be split into intermediate results. We require two auxiliary invariants $\O^\G_{\tau}$ and $\Q^\G_\tau$ of a pair $(\G, \tau)$; as we will see, in most cases these are ultimately computable from $\D^{\G}_\tau$. 
\begin{defn}[Auxiliary invariants $\O^\G_\tau$, $\Q^{\G}_\tau$]\label{AuxiliaryInvariants}
	Let $(\G, \tau)$ be an object of the category $\Gamma^{\EL}_{g, n}$. 
\begin{enumerate}[\hspace{1cm}]
		\item[$\O^\G_{\tau}$]--- For each $k \geq 1$, define
	\[\binom{[p]}{k} \defeq \{ S \subseteq [p] \mid |S| = k\}. \]
	Then we set
	\[\O^\G_\tau(k) \defeq \left\{S \in \binom{[p]}{k} \mid \{e_i\}_{i \in S} \text{ forms a }k\text{-cycle of }\G \right\},  \] and define
	\[\O^\G_\tau \defeq \bigsqcup_{k \geq 1} \O^\G_\tau(k). \]
	We call $\O^\G_\tau$ the \textsl{\textbf{total cycle set}} of $(\G, \tau)$.
	\item[$\Q^\G_\tau$]--- Make a map
	\[\langle -, -\rangle^\G_{\tau}: ([p] \sqcup I_n) \times ([p] \sqcup I_n) \to \{0, 1, 2\}  \]
	as follows: when $i, j \in [p]$, we set $\langle i, j\rangle^\G_\tau$ to be the number of vertices shared by the edges $e_i$ and $e_j$, adopting the convention that a loop meets itself at one vertex, and a non-loop edge meets itself at two vertices. We extend this rule to $[p] \sqcup I_n$ by treating markings $\alpha \in I_n$ as though they were loops. The \textsl{\textbf{intersection matrix}} $\Q^\G_{\tau}$ of $(\G, \tau)$ is the $(p + 1 + n) \times (p + 1 + n)$ symmetric matrix \[ \Q^\G_{\tau} \defeq 
	\kbordermatrix{
		&\lbrack p \rbrack &{} & I_n \\ 
		\lbrack p \rbrack & \left(\langle i, j \rangle^\G_{\tau}\right)_{i, j \in [p]} &\vrule  & \left(\langle i , x \rangle^\G_{\tau}\right)_{i \in [p], x \in I_n}  \\ \cline{2-4}
		I_n & \left(\langle x , i \rangle^\G_{\tau}\right)_{x \in I_n, i \in [p]} &\vrule & \left(\langle x, y \rangle^\G_{\tau}\right)_{x, y \in I_n}
	};
	\]
	here both $[p]$ and $I_n$ are ordered in the natural way.

	\end{enumerate}
\end{defn}

\begin{notn}
	In general, we use $i$, $j$, $k$, and $\ell$ to stand for elements of $[p]$, and use $x, y, z$ for elements of $I_n$. When we want to refer to arbitrary elements of $[p] \sqcup I_n$ and remain agnostic about whether they correspond to edges or markings, we will use the Greek letters $\alpha$, $\beta$, $\gamma$, and $\epsilon$. 
\end{notn}	
We now state three intermediate results which combine to prove Theorem \ref{Reconstruct}. The first (Proposition \ref{DcomputesO}) we can prove immediately, while the next two (Propositions \ref{DataTransfer} and \ref{Reconstruction}) require more work; we will use their statements to prove Theorem \ref{Reconstruct}, and then conclude this section with their proofs.

\begin{prop}\label{DcomputesO}
	Suppose $(\G, \tau)$ is any object of $\Gamma_{g, n}^{\EL}$. Then $\O^{\G}_\tau$ is uniquely determined by $\D^{\G}_\tau$.
\end{prop}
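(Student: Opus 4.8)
The plan is to recover the total cycle set $\O^\G_\tau=\bigsqcup_{k\ge1}\O^\G_\tau(k)$ from the nonloop contraction deck $\D^\G_\tau$ in two stages: first the $1$-cycles (loops), then every longer cycle by a single descent to the members of the deck.

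First I would recover the ambient label set. Writing $\tau:E(\G)\to[p]$, each member $((\G/e_j,\tau_j),j)$ of $\D^\G_\tau$ has $\tau_j:E(\G/e_j)\to[p-1]$, so $p$ --- hence $[p]$ --- can be read off (if $\G$ has no nonloop edge it is a single vertex carrying $p+1$ loops, and $\O^\G_\tau=\binom{[p]}{1}$, which is all that must be checked in that degenerate case). Letting $J=\{\,j:((\G/e_j,\tau_j),j)\in\D^\G_\tau\,\}$ be the set of indices appearing in the deck, an index $j\in[p]$ is a loop index of $\G$ precisely when $j\notin J$; hence
\[\O^\G_\tau(1)=\bigl\{\,\{j\}\ :\ j\in[p]\smallsetminus J\,\bigr\}\]
is determined by $\D^\G_\tau$.

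For the longer cycles I would invoke the contraction characterization of cycles established inside the proof of Corollary \ref{CyclesPreserved}: for $k\ge2$, a set $C\subseteq E(\G)$ is a $k$-cycle of $\G$ if and only if $c_e(C\smallsetminus\{e\})$ is a $(k-1)$-cycle of $\G/e$ for \emph{every} $e\in C$. Transported through the labellings --- recall $\tau_j$ assigns the edge $e_i$ of $\G/e_j$ the label $\delta_j(i)$ --- this reads, for $k\ge2$ and $S\in\binom{[p]}{k}$,
\[S\in\O^\G_\tau(k)\iff \text{for every }j\in S:\ \{j\}\notin\O^\G_\tau(1)\ \text{ and }\ \delta_j\bigl(S\smallsetminus\{j\}\bigr)\in\O^{\G/e_j}_{\tau_j}(k-1).\]
Everything on the right depends only on $\D^\G_\tau$: the first clause was settled above, and when it holds $e_j$ is a nonloop edge, so the pair $(\G/e_j,\tau_j)$ --- and with it the set $\O^{\G/e_j}_{\tau_j}$, computed directly from that pair --- is recorded in the deck. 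Thus each $\O^\G_\tau(k)$ with $k\ge2$ is expressed in terms of $\O^\G_\tau(1)$ and the cycle sets of the deck members, and so $\O^\G_\tau$ is reconstructed from $\D^\G_\tau$.

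I do not expect a serious obstacle here --- the excerpt itself flags this proposition as the one provable ``immediately.'' The only point requiring care is the relabelling bookkeeping: one must verify that the order-preserving bijection $\delta_j:[p]\smallsetminus\{j\}\to[p-1]$ matches the edge $e_i$ of $\G$ with the edge of $\G/e_j$ labelled $\delta_j(i)$ by $\tau_j$, so that a $(k-1)$-cycle of $(\G/e_j,\tau_j)$ pulls back to the intended $k$-subset of $[p]$. Once that is pinned down, the displayed equivalence is exactly the labelled form of the statement already proved for Corollary \ref{CyclesPreserved}, and no further work is needed.
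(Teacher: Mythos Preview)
Your proposal is correct and follows essentially the same approach as the paper's proof: identify loop indices as those not appearing in the indexing set of $\D^\G_\tau$, then for $k\ge2$ characterize $S\in\O^\G_\tau(k)$ by requiring that every $j\in S$ be a nonloop index with $\delta_j(S\smallsetminus\{j\})\in\O^{\G/e_j}_{\tau_j}(k-1)$. Your additional care about recovering $[p]$ and handling the degenerate all-loops case is a mild elaboration, but the argument is the same.
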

\begin{proof}
We first observe that $\ell \in [p]$ is a loop index if and only if it does not list any element of $\D^{\G}_\tau$. For $k \geq 2$, we observe that a subset $S \subseteq [p]$ with $|S| = k$ indexes a $k$-cycle of $(\G,\tau)$ if and only if all of the elements of $S$ are nonloop indices, and for each $j \in S$, the set
\[\delta_j(S \smallsetminus \{j\}) \subseteq [p - 1] \]
indexes a $(k - 1)$-cycle of $(\G/e_j, \tau_j)$. 
\end{proof}

The next result gives an algorithm for calculating $\Q^\G_{\tau}$ from $\D^\G_{\tau}$ and $\O^\G_{\tau}$, once we assume that $\G$ has at least three vertices and satisfies $b^1(\G) = g$. 
\begin{prop}\label{DataTransfer}
	Let $(\G, \tau)$ be an object of the category $\Gamma_{g,n}^\EL$ with $|V(\G)| \geq 3$ and $b^1(\G) = g$. Then there is an explicit algorithm for computing $\Q^\G_\tau$ from $\D^\G_{\tau}$ and $\O^\G_{\tau}$. This means that if we have another pair $(\G', \tau')$ with $|V(\G')| \geq 3$, $b^1(\G') = g$, and $\D^{\G}_\tau \cong \D^{\G'}_{\tau'}$, then we have an equality
	\[\Q^{\G}_{\tau} = \Q^{\G'}_{\tau'}. \]
\end{prop}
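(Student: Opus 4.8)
The plan is to leverage the fact that $\D^\G_\tau$ records the complete isomorphism type, as an object of $\Gamma_{g,n}^{\EL}$, of every nonloop contraction $(\G/e_j,\tau_j)$; hence any isomorphism invariant of $(\G/e_j,\tau_j)$ is itself a function of $\D^\G_\tau$, in particular the smaller intersection matrices $\Q^{\G/e_j}_{\tau_j}$, the cycle sets $\O^{\G/e_j}_{\tau_j}$, and the valences of $\G/e_j$. So the task is to express each entry $\langle\alpha,\beta\rangle^\G_\tau$ of $\Q^\G_\tau$ as an explicit function of these data and of $\O^\G_\tau$. A number of entries are immediate: $\langle x,x\rangle^\G_\tau = 1$ for $x\in I_n$; $\langle i,i\rangle^\G_\tau$ is $1$ when $i$ is a loop index and $2$ otherwise, and $\O^\G_\tau$ records the loop indices as its $1$-cycles; and $\langle\alpha,\beta\rangle^\G_\tau = 2$ precisely when $\{\alpha,\beta\}$ is a $2$-cycle of $\G$, which $\O^\G_\tau$ again records directly. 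What remains is to decide, for every other pair $(\alpha,\beta)$, whether $\langle\alpha,\beta\rangle^\G_\tau$ equals $0$ or $1$.

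For this I would first prove a \emph{distortion lemma} pinning down exactly how the quantity $\langle\alpha,\beta\rangle$ changes upon contracting a nonloop edge $e_j$ with $j\notin\{\alpha,\beta\}$. Writing $\hat v_j$ for the amalgamated vertex: because the induced map on vertex sets is a quotient, one checks that $\langle\alpha,\beta\rangle^{\G/e_j}_{\tau_j}$ can be strictly smaller than $\langle\alpha,\beta\rangle^\G_\tau$ only when $e_\alpha$ and $e_\beta$ both become loops at $\hat v_j$, which forces $\langle\alpha,\beta\rangle^\G_\tau = 2$ and is flagged by $\O^\G_\tau$; otherwise $\langle\alpha,\beta\rangle^{\G/e_j}_{\tau_j}\ge\langle\alpha,\beta\rangle^\G_\tau$, with equality unless $e_j$ joins a vertex of $e_\alpha$ to a distinct vertex of $e_\beta$ (the \textbf{bridging} case, which raises $0$ to $1$) or $e_j$ joins the non-shared endpoint of $e_\alpha$ to that of $e_\beta$ when these two edges already meet (the \textbf{connector} case, which raises $1$ to $2$). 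Consequently, for a pair in the $\{0,1\}$ range, if there is \emph{some} nonloop index $j\notin\{\alpha,\beta\}$ that is neither bridging nor connector, then $\langle\alpha,\beta\rangle^\G_\tau = \min_j\langle\alpha,\beta\rangle^{\G/e_j}_{\tau_j}$ taken over all nonloop $j\notin\{\alpha,\beta\}$; moreover any contraction achieving value $0$ certifies $\langle\alpha,\beta\rangle^\G_\tau = 0$ outright. So in this ``generic'' situation the entry is computable.

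The genuine obstacle is the \emph{exceptional case}: pairs $(\alpha,\beta)$ for which no valid nonloop contraction preserves the value of $\langle\alpha,\beta\rangle$, so that the recipe above does not apply. Here I would use connectivity of $\G$ together with the hypothesis $|V(\G)|\ge 3$: if some vertex lay outside the vertex sets of $e_\alpha$ and $e_\beta$, a nonloop edge incident to it would be a non-distorting contraction, so in the exceptional case the vertex sets of $e_\alpha$ and $e_\beta$ must exhaust $V(\G)$. Thus $\G$ has at most four vertices and its structure near $e_\alpha$ and $e_\beta$ is extremely rigid, leaving only finitely many local configurations. Each is then settled by inspecting the full isomorphism type of a single contraction $(\G/e_j,\tau_j)$ — which now has at most three vertices and is essentially explicit — together with the constraints $b^1(\G)=g$ and the stability inequality $\val(v)+|m^{-1}(v)|\ge 3$ at every vertex, which suffice to recover the ambiguous entry. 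Since $\O^\G_\tau$ is itself a function of $\D^\G_\tau$ by Proposition \ref{DcomputesO}, the resulting procedure exhibits $\Q^\G_\tau$ as a function of $\D^\G_\tau$ alone, whence the asserted equality $\Q^\G_\tau = \Q^{\G'}_{\tau'}$ whenever $\D^\G_\tau\cong\D^{\G'}_{\tau'}$. The main difficulty in carrying this out is making the exceptional analysis exhaustive, and verifying that $b^1(\G)=g$ and stability really do break every remaining ambiguity.
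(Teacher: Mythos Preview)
Your proposal is essentially the same approach as the paper's, and your outline is accurate. The paper computes the diagonal and the $2$-entries from $\O^\G_\tau$, then proves (Lemma~\ref{GenericCapping}) that for all remaining pairs the formula $\langle\alpha,\beta\rangle^\G_\tau=\min_j\langle\delta_j(\alpha),\delta_j(\beta)\rangle^{\G/e_j}_{\tau_j}$ holds unless $\G$ has a full subgraph isomorphic to one of seven explicit exceptional graphs $\mathbb{E}_0,\ldots,\mathbb{E}_6$ on three or four vertices---exactly your observation that the endpoints of $e_\alpha,e_\beta$ must exhaust $V(\G)$. Your ``distortion lemma'' is a clean repackaging of the case analysis inside that lemma. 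Where you gesture at ``finitely many local configurations\ldots settled by inspecting a single contraction,'' the paper carries out a genuinely lengthy case-by-case treatment (Lemmas~\ref{exceptionalcases} and~\ref{ExceptionalCalculations}), and this is where almost all the content lives; as you anticipated, the arguments for $\mathbb{E}_2$, $\mathbb{E}_4$, $\mathbb{E}_5$, $\mathbb{E}_6$ lean hard on stability and on $b^1(\G)=g$, and several require looking at more than one contraction or at auxiliary loop/marking incidences rather than a single $(\G/e_j,\tau_j)$. So your plan is right, but be aware that the exceptional analysis is not a formality---the paper's own remark and Figure~\ref{dNotEnough} show that without both hypotheses the statement is false.
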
	

The final ingredient of the proof of Theorem \ref{Reconstruct} is the claim that $\D^\G_{\tau}$ and $\Q^\G_{\tau}$ determine the pair $(\G, \tau)$ up to isomorphism. The proof of the following proposition will be constructive, justifying the title of section.
\begin{prop}\label{Reconstruction}
	Suppose $(\G, \tau)$ is an object of $\Gamma_{g, n}^{\EL}$ with $b^1(\G) = g$. Then the invariants $\D^\G_{\tau}$ and $\Q^\G_{\tau}$ determine the pair $(\G, \tau)$ up to isomorphism; that is, if $\D^\G_{\tau} \cong \D^{\G'}_{\tau'}$ and $\Q^\G_{\tau} = \Q^{\G'}_{\tau'}$, then $(\G, \tau) \cong (\G', \tau')$.
\end{prop}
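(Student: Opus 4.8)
The plan is to reconstruct the edge-labelled pair $(\G, \tau)$ directly from the combinatorial data in $\D^\G_\tau$ and $\Q^\G_\tau$, producing an explicit model whose isomorphism type depends only on these invariants. First I would observe that the invariants record, for each index $i \in [p]$, whether $e_i$ is a loop (this is visible from $\D^\G_\tau$ via Proposition \ref{DcomputesO}, or equivalently from the diagonal entry $\langle i, i\rangle^\G_\tau \in \{1,2\}$ of $\Q^\G_\tau$), and for each pair of indices or markings $\alpha, \beta$ the number $\langle \alpha, \beta\rangle^\G_\tau$ of vertices they share. The key point is that the vertex set $V(\G)$ can be recovered as an equivalence-class construction on the set of half-edge-like objects: each nonloop edge $e_i$ contributes two ``ends,'' each loop contributes one vertex, each marking $x$ is attached at a vertex, and two ends lie at the same vertex precisely when the corresponding intersection numbers force it. So the reconstruction algorithm is: build a provisional set of ends, then glue according to $\Q^\G_\tau$, check consistency against $b^1(\G) = g$ and against the known contractions in $\D^\G_\tau$, and read off the resulting graph.

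The main steps, in order, are as follows. (1) Use $\D^\G_\tau$ (Proposition \ref{DcomputesO}) or the diagonal of $\Q^\G_\tau$ to partition $[p]$ into loop indices $L$ and nonloop indices $N$. (2) For the isolated-vertex bookkeeping, note that any vertex of $\G$ is ``seen'' by at least one of: a nonloop edge end, a loop, or a marking (here stability, specifically $3g-3+n>0$ together with $b^1(\G)=g$ so all weights vanish at facets — but in general one must handle positive-weight vertices too, which is where I would use that $\G$ appears as a graph with $b^1(\G)=g$ and argue about which vertices can fail to be detected). (3) Reconstruct the incidence structure: declare two objects among $\{$ends of nonloop edges, loops, markings$\}$ to be co-incident according to the rule dictated by the entries $\langle\alpha,\beta\rangle^\G_\tau$, taking care that for a nonloop edge $e_i$ the two ends must be distributed so that $\langle i, j\rangle^\G_\tau$ is realized as the count of shared vertices — this is a small local matching problem at each vertex. (4) Verify that the glued object is a connected graph with $b^1 = g$ and the correct markings, and that its nonloop one-edge contractions agree with $\D^\G_\tau$; uniqueness then follows because any two realizations of the same incidence data are isomorphic as edge-labelled pairs.

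The hard part will be step (3): the intersection numbers $\langle i,j\rangle \in \{0,1,2\}$ do not by themselves pin down \emph{which} end of $e_i$ meets \emph{which} end of $e_j$, so at a vertex incident to several nonloop edges one must argue that the gluing is forced — or that the residual ambiguity is resolved by cross-referencing with $\D^\G_\tau$ (a particular contraction $(\G/e_j, \tau_j)$ tells us exactly how the neighbors of the contracted vertex are redistributed). A secondary subtlety is isolated or weight-carrying vertices that are invisible to the intersection matrix: I expect to dispatch these by noting that the hypothesis $b^1(\G) = g$ together with a counting argument (comparing $|E(\G)|$, $|V(\G)|$, and the contributions recorded in $\D^\G_\tau$) leaves no room for undetected vertices, or pins down their number and markings uniquely. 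Once the incidence data is shown to be recoverable and rigid, the final isomorphism $(\G,\tau) \cong (\G',\tau')$ is immediate from the construction, since both pairs are built from the identical recipe applied to the identical input $\D^\G_\tau \cong \D^{\G'}_{\tau'}$, $\Q^\G_\tau = \Q^{\G'}_{\tau'}$.
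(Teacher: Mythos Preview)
Your plan is a genuinely different route from the paper's, and the difference is instructive. You aim to rebuild $V(\G)$ globally as equivalence classes of ``ends,'' gluing according to $\Q^\G_\tau$. The paper instead works locally: it picks a single element $(\G/e_j,\tau_j)$ of $\D^\G_\tau$ and recovers $(\G,\tau)$ as a $j$-uncontraction of it at a specific vertex. The choice of $j$ and of the vertex is forced by a maximality trick: since $b^1(\G)=g$ (so every vertex has weight zero) and stability gives $\val(v)+|m^{-1}(v)|\geq 3$ at every vertex, contracting a nonloop edge strictly increases $\val+|m^{-1}|$ at the new vertex; hence a vertex of maximal $\val+|m^{-1}|$ among all graphs in $\D^\G_\tau$ must be the image of the contracted edge. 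A separate lemma (Lemma~\ref{HowtoUncontract}) then shows that the ordered partitions specifying the uncontraction are determined by $\Q^\G_\tau$ up to swapping the two new vertices, which is harmless. This sidesteps your step~(3) entirely.

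Your step~(3) is where your proposal has a real gap. As you note, $\langle i,j\rangle^\G_\tau=1$ for three pairwise-incident nonloop edges does not distinguish a triangle from a star; more generally this is a line-graph-reconstruction problem with known small exceptions, and you have not shown how $\D^\G_\tau$ resolves them. Your parenthetical remark---that a particular contraction ``tells us exactly how the neighbors of the contracted vertex are redistributed''---is in fact the seed of the paper's argument, but you have not isolated \emph{which} contraction to use or \emph{which} vertex to uncontract at; the maximal-valence observation is the missing idea. Two smaller points: first, $b^1(\G)=g$ forces all vertex weights to vanish, so your worry about positive-weight vertices is moot; second, your phrase ``small local matching problem at each vertex'' is circular, since the vertices are what you are trying to construct.
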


Before proceeding with the proofs of Propositions \ref{DataTransfer} and \ref{Reconstruction}, we pause to prove Theorem \ref{Reconstruct}.
 
\begin{proof}[Proof of Theorem \ref{Reconstruct}]
Suppose given $(\G, \tau), (\G', \tau')$ in $\Gamma_{g,n}^{\EL}$ such that $b^1(\G) = g$, $|V(\G)| \geq 3$, and $\D^{\G}_{\tau} \cong \D^{\G'}_{\tau'}$. Then of course $b^1(\G') = g$ and $|V(\G')| = |V(\G)|$. Proposition \ref{DcomputesO} implies that $\O^{\G}_{\tau} = \O^{\G'}_{\tau'}$, and then Proposition \ref{DataTransfer} gives that $\Q^{\G}_{\tau} = \Q^{\G'}_{\tau'}$. Using Proposition \ref{Reconstruction}, we may conclude that $(\G, \tau) \cong (\G', \tau')$, as desired.
\end{proof}
\begin{rem}
	The reader might expect that in almost all cases, it should be rather easy to determine $(\G, \tau)$ from $\mathcal{D}^{\G}_\tau$. Our algorithmic proof will illustrate that this is essentially the case as long as $|V(\G)|\geq 5$ and $b^1(\G) = g$ (and we expect that the assumption $b^1(\G) = g$ can be dispensed with here). However, when $|V(\G)| \leq 4$, the assumption that $b^1(\G) = g$ and the stability condition defining $\Gamma_{g, n}$ are both necessary; see Figure \ref{dNotEnough}. Most of the work in the proof of Proposition \ref{DataTransfer} is in dealing with graphs of at most four vertices; it is in these cases when our arguments rely strongly on the stability condition.
\end{rem}	

\begin{figure}
	\centering
	\includegraphics[scale=1]{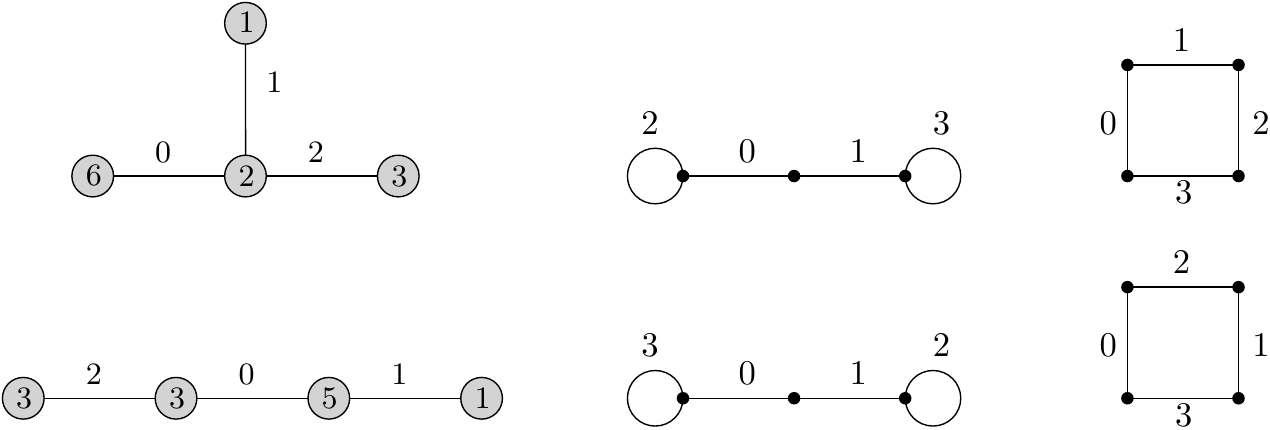}
	\caption{Examples of non-isomorphic pairs of edge-labelled graphs with the same values of $\D^{\G}_\tau$ and $\O^{\G}_\tau$. In the first case, the two edge-labelled graphs are objects of $\Gamma_{12}^{\EL}$, but do not have full genus. The graphs in the second two examples fail stability, so are not objects of any $\Gamma_{g, n}^{\EL}$.}
	\label{dNotEnough}
\end{figure}
We are now tasked with proving Propositions \ref{DataTransfer} and \ref{Reconstruction}. 
\subsection{Proof of Proposition \ref{DataTransfer}}
To prove Proposition \ref{DataTransfer}, we give an algorithm for calculating $\Q^\G_{\tau}$ from $\D^\G_\tau$ and $\O^\G_{\tau}$, given that $b^1(\G) = g$ and $|V(\G)| \geq 3$.

We require three preliminaries. First, we need to name the set of indices corresponding to nonloop edges in an edge-labelled pair $(\G, \tau)$; we put $\mathcal{N}^\G_\tau$ for this set. Clearly $\mathcal{N}^\G_{\tau}$ can be recovered from $\D^\G_\tau$ as the image $\D^\G_\tau \to [p]$ of the map which returns the index of each graph.

Second, recall that for each $j \in [p]$, the map $\delta^j: [p-1] \to [p]$ is the unique order-preserving injection which misses the element $j$, and $\delta_j : [p] \smallsetminus\{j\} \to [p - 1]$ is its inverse. When $n \geq 1$, it will be convenient to extend $\delta_j$ to get \[\delta_j: ([p] \smallsetminus\{j\}) \sqcup I_n \to [p - 1] \sqcup I_n,\] by having it act as the identity on $I_n$. The point is that when $\alpha, \beta \in [p]$, the elements $\delta_j(\alpha), \delta_j(\beta) \in [p - 1]$ correspond via $\tau_j$ to the edges $c_j(e_\alpha)$ and $c_j(e_\beta)$ in $\G/e_j$, but now given a marking $x \in I_n$ it also makes sense to think of $\delta_j(x)$ as the image of $x$ in $\G/e_j$.

Third, we require the notion of a full subgraph.
\begin{defn}\label{FullSubgraph}
	Let $\G = (G, w, m)$ be an object of $\Gamma_{g, n}$. An unweighted, unmarked subgraph $H$ of $G$ is called a \textsl{\textbf{full subgraph}} of $\G$ if for any edge $e$ of $G$ which is not in $H$, we have either that $e$ is a loop or that $e$ is parallel to a nonloop edge which is contained in $H$.
\end{defn}

We now describe the algorithmic proof of Proposition \ref{DataTransfer}. We take as input data $\D^\G_\tau$ and $\O^\G_\tau$, which are known to come from a mystery stable pair $(\G, \tau)$; it is known only that $b^1(\G) = g$ and $|V(\G)| \geq 3$. Our output is the intersection matrix $\Q^\G_\tau$ of $(\G, \tau)$. To follow the logic, the reader may find the directed graph/decision ``tree" in Figure \ref{DecisionTree} helpful.

\begin{enumerate}[\hspace{1cm}]
	\item \textbf{(Step 1)} First, we calculate the diagonal entries of $\Q^\G_{\tau}$; this is easily done using just $\D^\G_{\tau}$, as it is the same as deciding whether a given $\alpha \in [p] \sqcup I_n$ corresponds to a loop, marking, or nonloop edge in $\G$.
	\item \textbf{(Step 2)} We then find all non-diagonal entries of $\Q^\G_{\tau}$ which are equal to $2$ -- these correspond precisely to pairs $(i, j)$ of elements of $[p]$ such that $\{e_i, e_j\}$ forms a $2$-cycle of $\G$. As such, these entries can be read off directly from $\O^\G_\tau$.
	\item \textbf{(Step 3)} We check whether $|V(\G)| \geq 5$. If $|V(\G)| < 5$, then we proceed to Step 4. Otherwise, for all remaining pairs of indices $\alpha, \beta \in [p] \sqcup I_n$ (i.e., those entries of $\Q^\G_\tau$ not yet calculated in Steps 1 and 2), we have an equality 
	\begin{equation}\label{AlmostAllIntersections}
	\langle \alpha, \beta \rangle^\G_{\tau} = \min_{\substack{j \in \mathcal{N}^\G_\tau\\j \neq \alpha, \beta}} \langle \delta_j(\alpha), \delta_j(\beta) \rangle^{\G/e_j}_{\tau_j}.
	\end{equation}
	This means the remaining entries of $\Q^\G_\tau$ are calculated from $\D^\G_\tau$, and the algorithm terminates. The equality (\ref{AlmostAllIntersections}) will be proven in Lemma \ref{GenericCapping}. 
	\item \textbf{(Step 4)} For each $i = 0, \ldots, 6$, we use $\D^\G_\tau$ and $\O^\G_\tau$ to check whether $\G$ has a full subgraph isomorphic to any of the exceptional graphs $\mathbb{E}_i$ of Figure \ref{ExceptionalFamilies}. If it does have such a full subgraph for some $i$, then the remaining entries can be calculated by one of the procedures described in the proof of Lemma \ref{ExceptionalCalculations}, and the algorithm terminates. If $\G$ does not have any such full subgraph, then Lemma \ref{GenericCapping} will imply that the remaining entries of $\Q^\G_\tau$ are given by (\ref{AlmostAllIntersections}) above, so the algorithm also terminates in this case.
\end{enumerate}

\begin{figure}[h]
\centering
\includegraphics[scale=1]{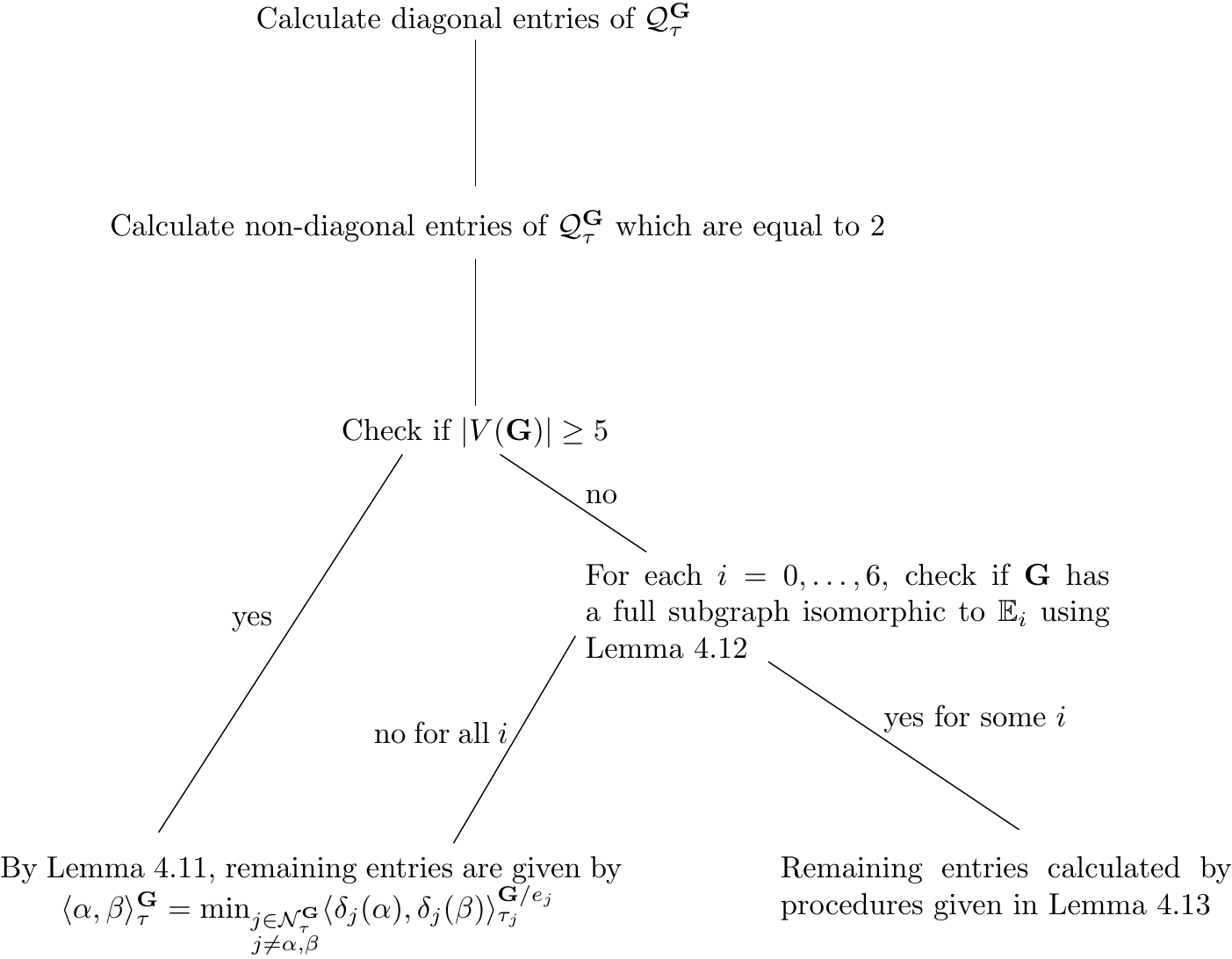}
\caption{A directed graph outlining the algorithmic calculation of $\Q^\G_\tau$ when $b^1(\G) = g$, given input data $\D^\G_\tau$ and $\O^\G_\tau$.}
\label{DecisionTree}
\end{figure}

We now prove that our algorithm terminates, by stating and proving Lemmas \ref{GenericCapping}-\ref{ExceptionalCalculations}. 

\begin{lem}\label{GenericCapping}
	Suppose $(\G, \tau)$ is an edge-labelled pair with $|V(\G)| \geq 3$, and suppose that there exists a pair of indices $\alpha, \beta \in [p] \sqcup I_n$ such that:
	\begin{enumerate}[(i)]
		\item $\alpha$ and $\beta$ do not correspond to a $2$-cycle in $\G$, and
		\item we have
		\[ \langle \alpha, \beta \rangle^\G_{\tau} \neq \min_{\substack{j \in \mathcal{N}^\G_\tau\\j \neq \alpha, \beta}} \langle \delta_j(\alpha), \delta_j(\beta) \rangle^{\G/e_j}_{\tau_j}. \]
	\end{enumerate}
	Then either:
	\begin{enumerate}[(I)]
		\item both $\alpha, \beta \in \mathcal{N}^\G_\tau$, and $\G$ has a full subgraph isomorphic to $\mathbb{E}_i$, for some $i = 0,\ldots,6$, or
		\item exactly one of $\alpha, \beta$ lies in $\mathcal{N}^{\G}_\tau$, and $\G$ has either $\mathbb{E}_5$ or $\mathbb{E}_6$ as a full subgraph.
	\end{enumerate}	
\end{lem}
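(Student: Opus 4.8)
The plan is to turn hypothesis (ii) into a combinatorial statement about which single contractions create new incidences between $e_\alpha$ and $e_\beta$, deduce from it that $\G$ is very small and structurally rigid, and then match the possibilities against the list $\mathbb{E}_0, \ldots, \mathbb{E}_6$ by a finite check. (We take $\alpha \neq \beta$ throughout; the diagonal case is degenerate.)

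First I would record a monotonicity principle: for a nonloop index $j \in \mathcal{N}^\G_\tau$ with $j \neq \alpha, \beta$, the contraction $c_j$ identifies the two endpoints of $e_j$ and is injective on all other vertices, so it can never destroy a vertex shared by $e_\alpha$ and $e_\beta$, only create one; hence $\langle \delta_j(\alpha), \delta_j(\beta)\rangle^{\G/e_j}_{\tau_j} \geq \langle \alpha, \beta\rangle^\G_\tau$ (here hypothesis (i) excludes the single situation in which a shared vertex could drop, namely a two-cycle formed by $e_\alpha$ and $e_\beta$). Thus the minimum in (ii) is automatically $\geq \langle \alpha, \beta\rangle^\G_\tau$, and (ii) is equivalent to the assertion that \emph{every} such $c_j$ strictly raises the intersection number. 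A short case check then identifies exactly which edges do this: writing $A$ and $B$ for the sets of vertices incident to $\alpha$ and to $\beta$ respectively (a single vertex if the index is a loop or a marking, a pair if it is a nonloop edge), $c_j$ strictly increases $\langle \alpha, \beta\rangle^\G_\tau$ if and only if $e_j$ is a nonloop edge with one endpoint in $A \smallsetminus B$ and the other in $B \smallsetminus A$.

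Combining these, every nonloop edge of $\G$ other than $e_\alpha$ and $e_\beta$ has both endpoints in $A \cup B$, and $e_\alpha, e_\beta$ trivially do as well; so every nonloop edge of $\G$ is supported on $A \cup B$. Since $\G$ is connected with $|V(\G)| \geq 3$, every vertex meets a nonloop edge, whence $V(\G) = A \cup B$ and therefore $3 \leq |V(\G)| \leq |A| + |B| \leq 4$. Moreover $\alpha$ and $\beta$ cannot both be loops or markings, as that would force $|V(\G)| \leq 2$; this already gives the dichotomy of the statement, case (I) being $|A| = |B| = 2$ and case (II) being exactly one of $|A|, |B|$ equal to $2$.

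The remaining work is a finite enumeration. Deleting the loops of $\G$ and keeping one edge from each parallel class of nonloop edges produces a full subgraph $H$ in the sense of Definition \ref{FullSubgraph}, which is a connected simple graph on $3$ or $4$ vertices, all of whose edges are $e_\alpha$ (on the pair $A$, if $\alpha$ is a nonloop edge), $e_\beta$ (on the pair $B$, if $\beta$ is a nonloop edge), and ``crossing'' edges from $A \smallsetminus B$ to $B \smallsetminus A$. Listing such $H$ up to isomorphism — using connectedness to discard disconnected candidates, and keeping track of the boundary subtlety that the minimum in (ii) may run over the empty index set, which corresponds only to the minimal choices of $H$ rather than to a separate phenomenon — one obtains precisely $\mathbb{E}_0, \ldots, \mathbb{E}_6$ of Figure \ref{ExceptionalFamilies} in case (I), among which only the two graphs on three vertices, $\mathbb{E}_5$ and $\mathbb{E}_6$, can occur in case (II). I expect this last step to be the only real obstacle: one must confirm the list is exhaustive, record which $\mathbb{E}_i$ shows up in which case, and check that each candidate is genuinely realized by a stable graph, i.e. that the low-valence vertices of $H$ admit weights or markings making $\G$ stable. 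Everything preceding it is essentially forced.
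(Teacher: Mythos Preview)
Your argument is correct and follows essentially the same route as the paper's proof: both establish that contracting a nonloop edge $e_j$ can only increase $\langle \alpha,\beta\rangle$, characterize the strict-increase condition, deduce that $V(\G)$ is contained in the union of the vertex supports of $\alpha$ and $\beta$ (so $|V(\G)|\le 4$), and then enumerate the resulting simple skeletons. Your packaging via the vertex sets $A$ and $B$ unifies what the paper does as a four-way case split (both loops/markings; both nonloop with $\langle\,\rangle=0$; both nonloop with $\langle\,\rangle=1$; exactly one nonloop), but the content is the same; note that your final remark about checking realizability by stable graphs is unnecessary, since the lemma only asserts a necessary condition.
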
	
\begin{proof}
First observe that if $\alpha, \beta \notin \mathcal{N}^\G_\tau$, then (\ref{AlmostAllIntersections}) always holds: if two loops are on the same vertex of $\G$, then the same is true for their images in all nonloop contractions of $\G$. If they are not on the same vertex of $\G$, then since $|V(\G)| \geq 3$, and $\G$ is connected, there must exist a nonloop edge which does not contain both loops; upon contracting this edge, the two loops are still disjoint. The same argument works if we replace one or both of the loops by markings.

Now suppose that there exist a pair $\alpha, \beta \in \mathcal{N}^\G_\tau$ such that $\langle \alpha, \beta \rangle_\tau^\G = 0$ (i.e., $e_\alpha$ and $e_\beta$ are disjoint nonloop edges of $\G$), and
\[\langle \alpha, \beta \rangle_\tau^\G < \langle \delta_j(\alpha), \delta_j(\beta) \rangle^{\G/e_j}_{\tau_j} \]
for any $j \in \mathcal{N}^{\G}_\tau$ which is distinct from $\alpha$ and $\beta$. Then we see that $|V(\G)| = 4$, because no connected graph on three vertices has two disjoint nonloop edges, and if $|V(\G)| \geq 5$, then the connectedness of $\G$ implies that we would be able to find an edge $e_j$ which is not incident to both $e_\alpha, e_\beta$, and thus $j$ would be a witness to Equation \ref{AlmostAllIntersections}. So, $|V(\G)| = 4$, and $\G$ must have a full subgraph isomorphic to one of $\mathbb{E}_0, \ldots, \mathbb{E}_4$: the only remaining possibility for a full subgraph of a graph with four vertices is the star graph, but we see that any such graph does not have a pair of disjoint nonloop edges.

Now we consider the case where there exists a pair $\alpha, \beta \in \mathcal{N}^\G_\tau$ such that $\langle \alpha, \beta \rangle_\tau^\G = 1$ (i.e., $e_\alpha$ and $e_\beta$ are distinct nonloop edges of $\G$ which meet at a single vertex), and
\[\langle \alpha, \beta \rangle_\tau^\G < \langle \delta_j(\alpha), \delta_j(\beta) \rangle^{\G/e_j}_{\tau_j} \]
for any $j \in \mathcal{N}^{\G}_\tau$ which is distinct from $\alpha$ and $\beta$. Then we must either have that the above assumption is vacuous, that is, $\mathcal{N}^{\G}_\tau = \{\alpha, \beta \}$, so that $\G$ has a full subgraph isomorphic to $\mathbb{E}_6$, or that every nonloop edge of $\G$ is contained in a 3-cycle, so that $\G$ has a full subgraph isomorphic to $\mathbb{E}_5$.

We conclude the proof by considering the case where exactly one of $\alpha$ or $\beta$ is in $\mathcal{N}^\G_\tau$; without loss of generality, suppose $\alpha \in \mathcal{N}^\G_{\tau}$, while $\beta$ corresponds either to a loop or marking. Again suppose that 
\[\langle \alpha, \beta \rangle_\tau^\G < \langle \delta_j(\alpha), \delta_j(\beta) \rangle^{\G/e_j}_{\tau_j} \]
for any $j \in \mathcal{N}^{\G}_\tau$ which is distinct from $\alpha$ (such $j$ will exist because $|V(\G)| \geq 3$). Since $\beta$ corresponds to a marking or loop, we have $\langle \delta_j(\alpha), \delta_j(\beta) \rangle^{\G/e_j}_{\tau_j} \leq 1$ for any such $j$, so we have $\langle \alpha, \beta \rangle^\G_\tau = 0$. So in $\G$ we can find a nonloop edge $e_j$, and a loop $e_\ell$, respectively a marking $x$, such that $e_j$ and $e_\ell$, resp. $x$, are disjoint in $\G$, but their images meet whenever we contract a nonloop edge distinct from $e_j$. We deduce that $|V(\G)| = 3$, so $\G$ has a full subgraph isomorphic to one of $\mathbb{E}_5$ or $\mathbb{E}_6$; this completes the proof.
\end{proof}

\begin{figure}[h]
	\centering
	\includegraphics[scale=1]{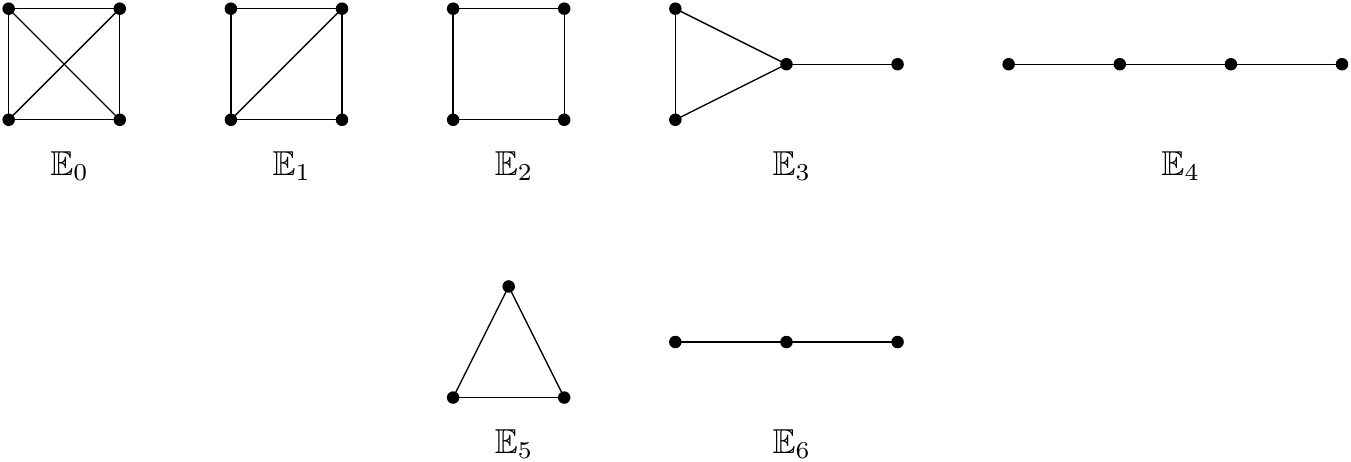}
	\caption{The exceptional graphs $\mathbb{E}_i$}
	\label{ExceptionalFamilies}
\end{figure}
We now establish the first half of Step 4: we can use $\D^\G_{\tau}$ and $\O^\G_\tau$ to decide whether $\G$ has a full subgraph isomorphic to any of the $\mathbb{E}_i$.
\begin{lem}\label{exceptionalcases}
	Suppose that $(\G, \tau)$ is a $\Gamma^{\EL}_{g, n}$-object with $b^1(\G) = g$. Then for each $i \in \{0, \ldots, 6\}$, there exists an explicit algorithm which takes as input $\D^\G_\tau$ and $\O^\G_\tau$, and checks whether $\G$ has a full subgraph isomorphic to $\mathbb{E}_i$.
\end{lem}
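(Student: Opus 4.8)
The plan is to reconstruct, from the invariants $\D^\G_\tau$ and $\O^\G_\tau$, enough of the underlying graph of $\G$ to decide whether it contains $\mathbb{E}_i$ as a full subgraph. The decisive simplification is that every $\mathbb{E}_i$ has at most four vertices, so the entire discussion takes place among graphs with $|V(\G)|\le 4$. First I would recover $|V(\G)|$ from the data: since $\tau$ is a bijection we have $|E(\G)|=p+1$, and the hypothesis $b^1(\G)=g$ gives $|V(\G)|=|E(\G)|-g+1=p+2-g$; here $p$ is visible in the data, every index appearing either as a singleton in $\O^\G_\tau(1)$ or as the label of some entry of $\D^\G_\tau$. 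The algorithm begins by checking $p+2-g=|V(\mathbb{E}_i)|$; if this fails it returns ``no'', and otherwise we are reduced to $|V(\G)|\in\{3,4\}$.

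Next, the ``cyclic'' part of the structure of $\G$ is read off directly from $\O^\G_\tau$. The loop indices are $[p]\smallsetminus\mathcal{N}^\G_\tau$, equivalently the singletons in $\O^\G_\tau(1)$; two nonloop indices $i,j$ lie in the same parallel class if and only if $\{i,j\}\in\O^\G_\tau(2)$, and this equivalence relation recovers all nonloop parallel classes and their multiplicities; a nonloop index is a bridge of $\G$ precisely when it lies in no element of $\O^\G_\tau$ of size $\ge 2$. Finally $\O^\G_\tau(3)$ and $\O^\G_\tau(4)$ detect the triangles and $4$-cycles of the underlying simple graph $\G^{\mathrm{simp}}$ obtained from $\G$ by deleting loops and identifying parallel edges, and even the number of triangles through each nonloop class. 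Since $\G$ is connected with at most four vertices, a short enumeration of connected simple graphs on three or four vertices shows that this information pins down $\G^{\mathrm{simp}}$ up to isomorphism in every case except one: a tree on four vertices, where one must still distinguish the path $P_4$ from the star $K_{1,3}$ (the one other a priori coincidence, $K_4$ versus $K_4-e$, is resolved by counting, for each class, how many triangles of $\G^{\mathrm{simp}}$ pass through it, which is visible in $\O^\G_\tau(3)$).

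The heart of the argument is therefore to resolve the $P_4$ versus $K_{1,3}$ ambiguity, and, more generally in the cases where $\mathbb{E}_i$ carries loops, to locate the loops at the correct vertices, using the contraction deck $\D^\G_\tau$ itself. The point is that each entry $(\G/e_j,\tau_j)$ of $\D^\G_\tau$ is known up to isomorphism as a $\Gamma^{\EL}_{g,n}$-object with at most three vertices, so its loop placement, marking distribution and intersection matrix are all available; contracting a nonloop edge merges its two endpoints and combines their loops, markings and remaining incidences. Comparing the various $\G/e_j$ with one another and with the constraints imposed by stability ($\val(v)+|m^{-1}(v)|\ge 3$ at every $v$, since $b^1(\G)=g$ forces every weight to vanish) and by $b^1(\G)=g$ pins down which vertices of $\G$ the loops and markings sit on, and in particular whether some vertex meets every nonloop edge (the star) or not (the path); this is also the step that governs when formula~\eqref{AlmostAllIntersections} of Lemma~\ref{GenericCapping} fails. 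I expect this to be the main obstacle: it is a finite but delicate case analysis on graphs of at most four vertices, and it is exactly here that the hypotheses $b^1(\G)=g$ and stability are indispensable, as the non-stable and non-full-genus examples of Figure~\ref{dNotEnough} show. Once $\G^{\mathrm{simp}}$ together with all edge multiplicities and loop counts is recovered up to isomorphism, deciding whether $\G$ has a full subgraph isomorphic to $\mathbb{E}_i$ is a finite comparison --- checking that $|V(\G)|=|V(\mathbb{E}_i)|$, that $\G^{\mathrm{simp}}\cong\mathbb{E}_i^{\mathrm{simp}}$, and that the isomorphism may be chosen so that every parallel class and every loop of $\mathbb{E}_i$ is dominated by the corresponding data of $\G$ --- and carrying this out for each $i=0,\ldots,6$ proves the lemma.
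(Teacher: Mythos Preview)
Your plan is essentially the same as the paper's: both reduce everything to cycle data read from $\O^\G_\tau$ together with $|V(\G)|$, and both isolate the single genuinely hard case as distinguishing a path $P_4$ from a star $K_{1,3}$ on four vertices (in the paper's language, $\mathbb{E}_4$ versus $\mathbb{T}_4$). Your criteria for the other $\mathbb{E}_i$ via vertex count and presence/absence of $3$- and $4$-cycles match the paper's almost verbatim. One small correction: none of the $\mathbb{E}_i$ carry loops --- they are all simple graphs --- so your remark about ``the cases where $\mathbb{E}_i$ carries loops'' is off; the loops and markings enter only as auxiliary data in $\G$ used to resolve the $\mathbb{E}_4$/$\mathbb{T}_4$ ambiguity.

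The one place your proposal remains a sketch is exactly that ambiguity, which you flag but do not resolve. The paper's argument there is worth comparing to your outline: if $\G$ has a loop or marking $\ell$, one uses Lemma~\ref{GenericCapping} (which you cite) to compute $\langle \ell,\alpha\rangle^\G_\tau$ for all $\alpha$, and then counts how many pairwise non-parallel nonloop edges are incident to $\ell$ --- three forces the star, at most two forces the path (with a small sub-check when there is only one). If $\G$ has neither loops nor markings, stability forces every edge into a $2$-cycle, and then one checks whether some nonloop contraction $\G/e$ has a loop not meeting all edges; this happens exactly for $\mathbb{E}_4$. This is precisely the ``finite but delicate case analysis'' you anticipated, and it does use both stability and $b^1(\G)=g$ as you predicted; filling it in along these lines completes your argument.
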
	
\begin{proof}
	It turns out that the most difficult case is $i = 4$, so we save that for last. 
	\begin{enumerate}
		\item [$(\mathbb{E}_0)$] A graph $\G$ has  $\mathbb{E}_0$ as a full subgraph if and only if it has four vertices, a cycle of length four, and four distinct $3$-cycles $C_1, C_2, C_3, C_4 \subseteq E(\G)$ such that $\cup_{i = 1}^{4} C_i$ contains no $2$-cycles.
		\item [$(\mathbb{E}_1)$] A graph has $\mathbb{E}_1$ as a full subgraph if and only if it has four vertices, contains both a $3$-cycle and $4$-cycle, but does not belong to $\mathbb{E}_0$.
		\item [$(\mathbb{E}_2)$] A graph has $\mathbb{E}_1$ as a full subgraph if and only if it has four vertices, contains a $4$-cycle, and does not contain any $3$-cycles.
		\item [$(\mathbb{E}_3)$] A graph has $\mathbb{E}_3$ as a full subgraph if and only if it has four vertices, contains a $3$-cycle, and does not contain any $4$-cycles.
		\item [$(\mathbb{E}_5)$] A graph has $\mathbb{E}_5$ as a full subgraph if and only if it has three vertices and contains a $3$-cycle.
		\item [$(\mathbb{E}_6)$] A graph has $\mathbb{E}_6$ as a full subgraph if and only if it has three vertices and does not contain any $3$-cycles.
	\end{enumerate} 
To check if a graph $\G$ has $\mathbb{E}_4$ as a subgraph, we first check that it has four vertices and that it does not have any of $\mathbb{E}_0, \ldots, \mathbb{E}_3$ as a full subgraph, using the above rules. At this point we can be sure that $\G$ either has $\mathbb{E}_4$ or the star graph $\mathbb{T}_4$ on four vertices as a full subgraph. To differentiate between these two, we proceed in separate ways, depending on whether or not $\G$ contains either a loop or a marking.

If $\G$ contains a loop, indexed by $\ell \in [p]$, then since $|V(\G)| = 4$, we can compute
\[ \langle \ell, \alpha \rangle_\tau^\G \]
for all $\alpha \in [p] \sqcup I_n$, using the result of Lemma \ref{GenericCapping}, which implies that (\ref{AlmostAllIntersections}) correctly computes these numbers. If there exists three distinct elements $j_1, j_2, j_3 \in \mathcal{N}^\G_\tau$ such that no two of $j_1, j_2, j_3$ form a $2$-cycle, and such that $\langle \ell, j_s \rangle_\tau^\G = 1$ for $s = 1, 2, 3$, then $\G$ has $\mathbb{T}_4$ as a full subgraph. If we can find at most two such distinct elements $j_1, j_2 \in \mathcal{N}^\G_{\tau}$, then $\G$ has $\mathbb{E}_4$ as a full subgraph. If we can only find one such element, then $\G$ has $\mathbb{T}_4$ as a full subgraph if and only if there exists a nonloop index $j \in \mathcal{N}^\G_\tau$ such that $c_j(e_\ell)$ is contained in every nonloop edge of $\G/e_j$. Note that the same procedure works if $\ell$ is replaced by a marking.

If $\G$ contains no loops or markings and has a full subgraph isomorphic to either $\mathbb{T}_4$ or $\mathbb{E}_4$, then by stability, every single edge of $\G$ is contained in a $2$-cycle. Then $\G$ has a full subgraph isomorphic to $\mathbb{E}_4$ if and only if there exists a nonloop contraction $\G/e$ of $\G$, such that $\G/e$ has a loop which is not incident to all of its edges.

As such, we have shown that the data $\D^\G_\tau$ and $\O^\G_\tau$ indeed suffice to decide whether $\G$ has a full subgraph isomorphic to $\mathbb{E}_4$, and the lemma is proven.
\end{proof}	
	
The following lemma shows that Step 4 of our algorithm terminates; given that $\G$ has a full subgraph isomorphic to $\mathbb{E}_i$ for some $i = 0, \ldots, 6$, there exists an explicit procedure to calculate $\Q^\G_\tau$ from $\D^\G_\tau$ and $\O^\G_\tau$.
\begin{lem}\label{ExceptionalCalculations}
Suppose given that $b^1(\G) = g$, and that $\G$ has a full subgraph isomorphic to $\mathbb{E}_i$, for some $i = 0, \ldots, 6$. Then there is an explicit algorithm which takes as input $\D^\G_\tau$ and $\O^\G_\tau$ and calculates $\Q^{\G}_\tau$
\end{lem}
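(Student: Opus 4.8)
The plan is to argue by cases on $i \in \{0, 1, \ldots, 6\}$, after first isolating the entries of $\Q^\G_\tau$ that genuinely require a new rule. By Steps 1 and 2 of the algorithm the diagonal of $\Q^\G_\tau$ and all off-diagonal entries equal to $2$ are already determined, so only the off-diagonal entries with $\{\alpha, \beta\}$ not a $2$-cycle remain; and Lemma~\ref{GenericCapping} says that the formula~(\ref{AlmostAllIntersections}) can fail for such a pair only when both $\alpha$ and $\beta$ are nonloop edges, or when exactly one is a nonloop edge and the other a loop or a marking --- the latter possibility moreover forcing $\G$ to have $\mathbb{E}_5$ or $\mathbb{E}_6$ as a full subgraph. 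Since a graph with a full subgraph isomorphic to $\mathbb{E}_i$ has exactly as many vertices as $\mathbb{E}_i$, in the four-vertex cases $i \in \{0, \ldots, 4\}$ the only entries needing a new rule are nonloop--nonloop, while in the three-vertex cases $i \in \{5, 6\}$ one must in addition treat the nonloop--loop and nonloop--marking entries.

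For each $i$ the idea is then to use $\D^\G_\tau$ and $\O^\G_\tau$ to recover enough of the combinatorial structure of $\G$ to read the remaining entries off of $\mathbb{E}_i$ by inspection. Because $\G$ has $\mathbb{E}_i$ as a full subgraph, it is obtained from $\mathbb{E}_i$ by adjoining loops, inserting parallel copies of nonloop edges, and distributing the $n$ markings, and $b^1(\G) = g$ forces every vertex weight to vanish. First I would read off the partition of $\mathcal{N}^\G_\tau$ into parallel classes from the $2$-cycles recorded in $\O^\G_\tau$; then the longer cycles in $\O^\G_\tau$, together with the isomorphism types of the one-edge contractions recorded in $\D^\G_\tau$, pin down the position of each parallel class --- and of each loop and marking --- relative to the vertices of the $\mathbb{E}_i$-skeleton. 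For example, for $\mathbb{E}_3$ (triangle plus pendant) the pendant class is the unique class lying in no $3$-cycle, and contracting one of its edges yields a triangle-based three-vertex graph in the deck that reveals which triangle class is opposite the pendant; for $\mathbb{E}_5$ one locates the vertex carrying a given loop or marking by testing, across the contractions in $\D^\G_\tau$, whether its image has been merged onto a vertex incident to a given edge class. Once every parallel class, loop, and marking has been placed relative to the skeleton, each entry of $\Q^\G_\tau$ is just read off from $\mathbb{E}_i$.

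I expect the main obstacle to be the case $\mathbb{E}_4$, the path on four vertices, together with the task of separating it from the star graph $\mathbb{T}_4$: here $\O^\G_\tau$ carries essentially no information, since no $3$-cycle or $4$-cycle can occur and only the parallel $2$-cycles are visible, so the two configurations have identical cycle data and the decisive input must come from $\D^\G_\tau$. One would single out the ``middle'' parallel class using the isomorphism types of the one-edge contractions recorded in $\D^\G_\tau$ --- for instance it is the class whose contractions produce a graph with a loop at a vertex adjacent to the two remaining edge bundles --- and then conclude that edges lying in the two ``end'' classes are disjoint; the analogous point when $\G$ also carries a loop or a marking was already essentially established inside the proof of Lemma~\ref{exceptionalcases}, which is why that case was the most delicate there. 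The residual work --- checking, case by case, that the prescribed rules take only $\D^\G_\tau$ and $\O^\G_\tau$ as input, that they reach every outstanding entry, and that their output does not depend on the unknown representative $(\G, \tau)$ --- is routine but lengthy, and is naturally organized along the branches of the decision tree of Figure~\ref{DecisionTree}.
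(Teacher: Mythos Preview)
Your proposal is correct in outline and matches the paper's proof in its overall architecture: both set up the case split on $i$, both identify exactly the same entries of $\Q^\G_\tau$ as outstanding (nonloop--nonloop pairs not forming a $2$-cycle when $|V(\G)|=4$, and additionally nonloop versus loop/marking when $|V(\G)|=3$), and both single out $\mathbb{E}_4$ as the delicate case requiring the deck rather than the cycle set.

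The one organizational difference worth flagging is that the paper does not attempt to reconstruct the full placement of parallel classes, loops, and markings on the $\mathbb{E}_i$-skeleton and then read everything off at once. Instead it writes down, for each $\mathbb{E}_i$, a direct test for each outstanding entry $\langle\alpha,\beta\rangle^\G_\tau$ individually: for $\mathbb{E}_0,\mathbb{E}_1$ the test is membership in a common $3$-cycle; for $\mathbb{E}_2$ and $\mathbb{E}_4$ it branches on whether a loop or marking is present and, if not, falls back on stability (forcing parallels) to read the answer from a single well-chosen contraction; for $\mathbb{E}_3$ it uses $3$-cycles plus stability at the pendant vertex; for $\mathbb{E}_5$ it compares $\val+|m^{-1}|$ at the loop/marking vertex before and after a contraction, using the strict increase under contraction that $b^1(\G)=g$ guarantees; for $\mathbb{E}_6$ it bootstraps through $2$-cycles and stability. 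Your reconstruct-then-read strategy is equivalent and arguably tidier in the cycle-rich cases ($\mathbb{E}_0,\mathbb{E}_1,\mathbb{E}_3,\mathbb{E}_5$), but in the cycle-poor cases $\mathbb{E}_2$ and $\mathbb{E}_4$ the skeleton has nontrivial automorphisms and $\O^\G_\tau$ alone cannot break the symmetry, so you would end up doing essentially the same per-entry contraction tests the paper writes down. Either way the content is the same; only the bookkeeping differs.
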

\begin{proof}
	When $|V(\G)| = 4$, the first two steps of the algorithm, taken together with Lemma \ref{GenericCapping}, allow us to calculate all of the numbers $\langle \alpha, \beta\rangle_\tau^\G$ except possibly when 
	\begin{quote}
	($\dagger$) $\alpha, \beta$ are distinct nonloop indices which do not correspond to a $2$-cycle in $\G$.
	\end{quote}
	Thus for $\mathbb{E}_0, \ldots, \mathbb{E}_4$, we need only give a rule for calculating these numbers. Throughout the first five parts of the proof, we  assume that $\alpha, \beta$ are indices satisfying ($\dagger$).\\
	
	\noindent\underline{If $\G$ has a full subgraph isomorphic to $\mathbb{E}_0$ or $\mathbb{E}_1$}: then \[\langle \alpha, \beta \rangle^\G_\tau = \begin{cases}
	1 &\text{ if } \text{there exists }j \in [p] \text{ such that }\{\alpha, \beta, j\} \text{ index a 3-cycle of } \G;\\
	0 &\text{ otherwise.}
	\end{cases}\] 
	
	\noindent\underline{If $\G$ has a full subgraph isomorphic to $\mathbb{E}_2$}: we proceed in two different ways, depending on whether $\G$ contains any loops or markings. 
	
	If it has either a loop or a marking index $\gamma \in [p] \sqcup I_n$, we first use Lemma \ref{GenericCapping} to calculate $\langle \gamma, \epsilon \rangle^\G_\tau$ for all $\epsilon \in [p] \sqcup I_n$. Because $\mathbb{E}_2$ is a $4$-cycle, we can find two distinct nonloop indices $j, k \in \mathcal{N}^\G_\tau$ such that $\{j, k\}$ does not index a $2$-cycle, and such that $\langle \gamma, j \rangle^\G_\tau = \langle\gamma, k \rangle^\G_\tau = 1$. In other words, $j$ and $k$ index two nonloop edges which are meeting at a corner of the square $\mathbb{E}_2$; this is the corner at which the loop/marking $\gamma$ is based. To calculate $\langle \alpha, \beta \rangle_\tau^\G$, then, it suffices to calculate $\langle \alpha, j \rangle^\G_\tau$, $\langle \alpha, k \rangle^\G_\tau$, $\langle \beta, j \rangle^\G_\tau$, and $\langle \beta, k \rangle^\G_\tau$. We can find $\langle \alpha, j \rangle_\tau^\G$ by looking at $\G/e_j$, and checking whether $c_j(e_{\alpha})$ meets the image of the loop or marking indexed by $\gamma$. If $c_j(e_\alpha)$ is a loop on the same vertex as the image of $\gamma$, then $\langle \alpha, j \rangle^\G_\tau = 2$. If the image of $e_\alpha$ is a nonloop edge which contains the image of $\gamma$, then $\langle \alpha, j \rangle^\G_\tau = 1$; in any other case $\langle \alpha, j \rangle^\G_\tau = 0$. By symmetry, the same rule can be adjusted to compute the other three numbers $\langle \alpha, k \rangle^\G_\tau$, $\langle \beta, j \rangle^\G_\tau$, and $\langle \beta, k \rangle^\G_\tau$.
	
	Now suppose that $\G$ has a full subgraph isomorphic to $\mathbb{E}_2$, but contains no loops or markings. If there exists $j \in \mathcal{N}^\G_\tau$ such that $\{\alpha, j\}$ forms a $2$-cycle, then $\langle \alpha, \beta \rangle^\G_{\tau}$ is equal to $1$ if $c_j(e_\alpha), c_j(e_\beta)$ meet in $\G/e_j$, and $0$ otherwise. Reversing the roles of $\alpha$ and $\beta$, we see that the same rule works if $\beta$ is contained in a $2$-cycle. If neither $\alpha$ nor $\beta$ is contained in a $2$-cycle, then $\langle \alpha, \beta \rangle_\tau^\G = 0$ by the stability condition: if they met each other at a vertex, that vertex would have weight $0$ and valence $2$ in $\G$, since $\G$ is supposed not to have loops or markings.\\
	
	\noindent\underline{If $\G$ has a full subgraph isomorphic to $\mathbb{E}_3$}: then $\langle\alpha, \beta \rangle^\G_\tau = 1$ if $e_\alpha, e_\beta$ are contained in a $3$-cycle of $\G$. At least one of $e_\alpha$ or $e_\beta$ must be contained in a $3$-cycle since we have assumed that $\{\alpha, \beta \}$ does not index a $2$-cycle. So suppose without loss of generality that $\alpha$ is contained in a $3$-cycle, and $\beta$ is not. Then by stability, there exists an index $\gamma \in [p] \sqcup I_n$ corresponding to one of the following: a parallel edge to $e_\beta$, a loop meeting $e_\beta$, or a marking supported on one of the vertices of $e_\beta$. Then we see that $\langle \alpha, \beta\rangle^\G_\tau = \langle \delta_{\beta}(\alpha), \delta_{\beta}(\gamma) \rangle^{\G/e_\beta}_{\tau_\beta}$.\\
	
	\noindent\underline{If $\G$ has a full subgraph isomorphic to $\mathbb{E}_4$}: we again use casework, depending on whether or not $\G$ has loops or markings. The argument is similar in spirit to the case of $\mathbb{E}_2$ and the last part of the proof of Lemma \ref{exceptionalcases}; the details are left to the reader.\\
	
	When $|V(\G)| = 3$, we have $\langle \alpha, \beta \rangle^\G_\tau = 1$ whenever $\alpha, \beta$ index distinct nonloop edges which do not form a $2$-cycle, so we need only show how to calculate $\langle \alpha, \beta \rangle^\G_\tau$ when
	\begin{quote}
		($*$) $\alpha \in \mathcal{N}^\G_\tau$ is the index of a nonloop edge, and $\beta \in ([p] \smallsetminus \mathcal{N}^\G_\tau) \sqcup I_n$ is the index of loop or marking.
	\end{quote}	
	In the remaining cases we assume $\alpha, \beta$ are indices satisfying ($*$).\\
	
	\noindent\underline{If $\G$ has a full subgraph isomorphic to $\mathbb{E}_5$}: we use the claim that if $\G$ is any object of $\Gamma_{g, n}$ with $b^1(\G) = g$, $e$ is a nonloop edge of $\G$, and $v$ is a vertex contained in $e$, we always have
	\[\val(v) + |m_\G^{-1}(v)| \lneq \val(c_e(e)) + |m_{\G/e}^{-1}(c_e(e))|, \]
	where $c_e(e) \in V(\G/e)$ is the image of $e$ in the contraction, and $m_\G, m_{\G/e}$ are the marking functions of $\G$ and $\G/e$, respectively. Indeed, due to the stability condition and our assumption that $b^1(\G) = g$, all vertices of $\G$ satisfy $\val(v) + |m_\G^{-1}(v)| \geq 3$. The claim then follows from the formula 
	\[\val(c_e(e)) + |m_{\G/e}^{-1}(c_e(e))| = \val(v) + \val(v') + |m_\G^{-1}(v)| + |m_\G^{-1}(v')| - 2, \]
	where $v'$ is the other vertex of $e$. 
	
	If $\G$ has a full subgraph isomorphic to $\mathbb{E}_5$, and $\beta$ indexes a marking or loop based at a vertex $v_\beta$, we find
	\[\val(v_\beta) + |m_{\G}^{-1}(v_\beta)| = \min_{j \in \mathcal{N}^\G_\tau} \{\val(v_{\delta_j(\beta)}) + |m_{\G/e_j}^{-1}(v_{\delta_j(\beta)})|  \} , \]
	where $v_{\delta_{j}(\beta)}$ denotes the vertex of $\G/e_j$ which supports the image of the loop or marking indexed by $\beta$. The above equality holds because if $\G$ contains a $3$-cycle, then there must be a nonloop edge $e_j$ which does not contain $v_\beta$. Now for $\alpha \in \mathcal{N}^\G_{\tau}$, we see that $\langle\alpha, \beta \rangle^\G_\tau = 1$ if and only if the valence plus number of markings supported by the vertex $v_{\delta_{\alpha}(\beta)}$ in $\G/e_\alpha$ is strictly greater than the valence plus markings supported of the vertex $v_\beta$ in $\G$. Otherwise, $\langle \alpha, \beta \rangle^\G_\tau = 0$.\\
	
	\noindent\underline{If $\G$ has a full subgraph isomorphic to $\mathbb{E}_6$}: First check if $\alpha$ is contained in a $2$-cycle $\{\alpha, j\}$. If so, then $\langle \alpha, \beta \rangle^\G_\tau = \langle \delta_{j}(\alpha), \delta_{j}(\beta) \rangle^{\G/e_{j}}_{\tau_{j}}$. Otherwise, if $\alpha$ is not contained in any $2$-cycle, we check if there is some other $2$-cycle $\{j, k\}$ of $\G$, and use the same rule we just described to check if $\langle \beta, j \rangle^\G_\tau = 1$; otherwise, if this quantity is $0$, we can conclude that $\langle \alpha, \beta \rangle^\G_\tau = 1$. 
	If $\langle \beta, j \rangle^\G_\tau = 1$, then there are two possibilities for a full edge-labelled subgraph of $\G$, as seen in Figure \ref{calculationE_6}.
	\begin{figure}[h]
		\centering
		\includegraphics[scale=1]{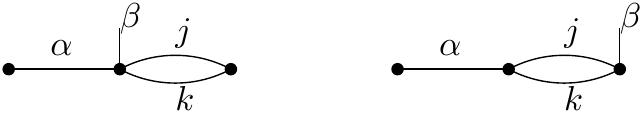}
		\caption{}
		\label{calculationE_6}
	\end{figure}	

	To decide between these two possibilities, we appeal to stability: $\langle \alpha, \beta \rangle^\G_\tau = 1$ if and only if there exists some $\gamma \in ([p] \smallsetminus \mathcal{N}^\G_\tau) \sqcup I_n$ such that $\langle \beta, \gamma \rangle^\G_\tau = 0$ (recall that this number is calculated by Lemma \ref{GenericCapping}), and such that $\langle \delta_{\alpha}(\beta), \delta_\alpha(\gamma) \rangle^{\G/e_\alpha}_{\tau_\alpha} = 1$.
	
	Finally, if $\G$ contains no $2$-cycles, then by stability each vertex of $\G$ must support a nonempty set of markings or loops, and again we have $\langle \alpha, \beta \rangle^\G_\tau = 1$ if and only if there exists $\gamma \in ([p] \smallsetminus \mathcal{N}^\G_\tau) \sqcup I_n$ such that $\langle \beta, \gamma \rangle^\G_\tau = 0$ and $\langle \delta_{\alpha}(\beta), \delta_\alpha(\gamma) \rangle^{\G/e_\alpha}_{\tau_\alpha} = 1$.
\end{proof}

Proposition \ref{DataTransfer} is now proven by running the algorithm depicted in Figure \ref{DecisionTree}.

\subsection{Proof of Proposition \ref{Reconstruction}} We conclude this section by proving Proposition \ref{Reconstruction}, which states that if $b^1(\G) = g$, then together $\D^\G_\tau$ and $\Q^\G_\tau$ determine $(\G, \tau)$ up to isomorphism. The gist is that we will recover $(\G, \tau)$ as an ``uncontraction" of some $(\G/e_j, \tau_j)$ in the list $\D^\G_\tau$ at a specified vertex. We now make precise what we mean by uncontraction. Recall that given an edge-labelled pair $(\G, \tau)$, we put $c_j : \G \to \G/e_j$ for the contraction of edge $e_j$ in $\Gamma_{g, n}$.
\begin{defn}
	Let $(\H, \pi)$ be an object of $\Gamma_{g, n}^\EL$, and fix a vertex $v \in V(\H)$. Then another $\Gamma^{\EL}_{g, n}$-object $(\G, \tau)$ is called a \textsl{\textbf{$j$-uncontraction}} of $(\H, \pi)$ at $v$ if there exists an isomorphism of pairs $\varphi: (\G/e_j, \tau_j) \to (\H, \pi)$ which satisfies $\varphi(c_j(e_j)) = v$.
\end{defn}
Given a vertex $v \in V(\H)$, we put $\mathcal{N}^v_\pi \subseteq [p]$ for the set of indices corresponding via $\pi: E(\H) \to [p]$ to nonloop edges containing $v$, and similarly put $\mathcal{L}^v_\pi \subseteq [p]$ for the set of indices corresponding to loops based at $v$. When $n \geq 1$, we also define $I^v_\pi \subseteq I_n$ to be the set of markings based at $v$.
\begin{lem}\label{HowtoUncontract}
Assume that $(\H, \pi)$ in $\Gamma^{\EL}_{g, n}$ satisfies $b^1(\H) = g$, and let $v \in V(\H)$. Then any $j$-uncontraction $(\G, \tau)$ of $(\H, \pi)$ at $v$ is determined up to isomorphism by three ordered partitions \[\mathcal{N}^{v}_\pi = \mathcal{N}^{v}_\pi(1) \sqcup \mathcal{N}^{v}_\pi(2),\] \[I^v_\pi = I^v_\pi(1) \sqcup I^v_\pi(2),\] and \[\mathcal{L}^v_\pi = \mathcal{L}^v_\pi(0) \sqcup \mathcal{L}^v_\pi(1) \sqcup \mathcal{L}^v_\pi(2),\] subject to the stability inequalities
\[|\mathcal{N}^v_\pi(i)| + 2|\mathcal{L}^v_\pi(i)| + |I^v_\pi(i)| + |\mathcal{L}^v_\pi(0)| + 1 \geq 3 \]
for $i = 1, 2$. Given a distinct set of three ordered partitions $\mathcal{N}^v_\pi = \mathcal{N}^v_\pi(1)' \sqcup \mathcal{N}^v_\pi(2)'$ etc., the resulting $j$-uncontractions $(\G, \tau)$ and $(\G', \tau')$ satisfy $\Q^\G_{\tau} = \Q^{\G'}_{\tau'}$ if and only if \[\mathcal{N}^v_\pi(1) = \mathcal{N}^v_\pi(2)'\] \[{I}^v_\pi(1) = I^v_\pi(2)',\]  \[\mathcal{L}^v_\pi(1) = \mathcal{L}^v_{\pi}(2)',\]
and 
\[\mathcal{L}^v_\pi(0) = \mathcal{L}^v_\pi(0)'. \]
Moreover, if the above equalities hold, then $(\G, \tau)$ and $(\G', \tau')$ are isomorphic.
\end{lem}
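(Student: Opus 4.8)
The plan is to prove the lemma's three assertions in turn: (i) $j$-uncontractions of $(\H,\pi)$ at $v$ are classified, up to isomorphism, by stable triples of partitions; (ii) swapping the two parts of such a triple yields an isomorphic edge-labelled pair; and (iii) conversely, equality of intersection matrices forces the triples to coincide up to that swap.

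For (i), I would first note that in any $j$-uncontraction $(\G,\tau)$ the edge $e_j$ is a nonloop edge and $b^1(\G)=g$: the contraction $\G\to\G/e_j\cong\H$ cannot lower the first Betti number since $b^1(\H)=g\geq b^1(\G)$, whereas contracting a loop would. Hence every vertex weight of $\G$ vanishes, so the stability of $\G$ at the two distinct endpoints of $e_j$ --- fix an order on them, say $v_1,v_2$ --- is precisely the displayed pair of inequalities (with the weight terms zero). The classification is then bookkeeping: transporting $(\H,\pi)$ back to $\G/e_j$ through the isomorphism $\varphi$, the classes $\mathcal{N}^v_\pi(i)$, $I^v_\pi(i)$, $\mathcal{L}^v_\pi(i)$ consist of the indices of nonloop edges meeting only $v_i$, markings at $v_i$, and loops at $v_i$, while $\mathcal{L}^v_\pi(0)$ collects the indices of nonloop edges of $\G$ joining $v_1$ to $v_2$ other than $e_j$; conversely, any triple of partitions obeying the stability inequalities is realized by the obvious ``vertex split'' (replace $v$ by $v_1,v_2$; insert the new edge $e_j$ between them; reattach each edge or marking of class $(i)$ to $v_i$; promote each loop in $\mathcal{L}^v_\pi(0)$ to a nonloop edge $v_1v_2$; relabel the rest through $\delta^j$), which is a stable object of $\Gamma_{g,n}^{\EL}$ with $b^1=g$ whose contraction of $e_j$ is $(\H,\pi)$ sending $c_j(e_j)$ to $v$. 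These two operations are mutually inverse up to isomorphism and the construction is canonical in $(\H,\pi,v)$, giving (i).

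The heart of the argument is (iii): I claim $\Q^\G_\tau$, together with the known matrix $\Q^\H_\pi$, recovers the triple of partitions up to the swap. The row of $\Q^\G_\tau$ indexed by $j$ gives the first separation --- an index $\beta\neq j$ satisfies $\langle j,\beta\rangle^\G_\tau=2$ exactly when $e_\beta$ is parallel to $e_j$, i.e. when $\delta_j(\beta)\in\mathcal{L}^v_\pi(0)$, and satisfies $\langle j,\beta\rangle^\G_\tau=1$ exactly when $\beta$ is incident to one of $v_1,v_2$ without being parallel to $e_j$; write $U$ for the latter set of indices, so that $\delta_j(U)=\mathcal{N}^v_\pi\sqcup(\mathcal{L}^v_\pi\smallsetminus\mathcal{L}^v_\pi(0))\sqcup I^v_\pi$. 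Thus $\mathcal{L}^v_\pi(0)$ and $U$ are read off from $\Q^\G_\tau$. To split $U$ into the two sides of the vertex split I would establish the identity
\[ \langle\alpha,\beta\rangle^\G_\tau \;=\; \begin{cases} \langle\delta_j(\alpha),\delta_j(\beta)\rangle^\H_\pi & \text{if }\alpha,\beta\text{ lie on the same side},\\[2pt] \langle\delta_j(\alpha),\delta_j(\beta)\rangle^\H_\pi-1 & \text{if }\alpha,\beta\text{ lie on opposite sides},\end{cases} \]
for $\alpha,\beta\in U$, valid because contracting $e_j$ identifies $v_1$ with $v_2$, which increases the number of common vertices of $e_\alpha$ and $e_\beta$ by exactly one precisely when the split had separated them. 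Since the two cases give different values, comparing $\Q^\G_\tau$ with $\Q^\H_\pi$ recovers the ``same side'' relation on $U$; this relation is a disjoint union of at most two cliques, hence determines the unordered pair of sides, and together with $\mathcal{L}^v_\pi(0)$ this pins down the triple of partitions up to swapping $(1)\leftrightarrow(2)$. Therefore $\Q^\G_\tau=\Q^{\G'}_{\tau'}$ forces the primed triple to equal the unprimed one or its swap, i.e. the stated equalities $\mathcal{N}^v_\pi(1)=\mathcal{N}^v_\pi(2)'$, $I^v_\pi(1)=I^v_\pi(2)'$, $\mathcal{L}^v_\pi(1)=\mathcal{L}^v_\pi(2)'$, $\mathcal{L}^v_\pi(0)=\mathcal{L}^v_\pi(0)'$.

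Assertion (ii) and the ``if'' half of (iii) are then immediate: if the primed triple is the swap of the unprimed one, the map on $X(\G)$ interchanging $v_1$ and $v_2$ and fixing every other vertex, half-edge and edge is an isomorphism of edge-labelled pairs $(\G,\tau)\xrightarrow{\sim}(\G',\tau')$ (it carries $e_j$ to $e_j$ and preserves all labels), which is (ii); and since $\Q^\G_\tau$ is an isomorphism invariant of pairs in $\Gamma_{g,n}^{\EL}$, this also gives $\Q^\G_\tau=\Q^{\G'}_{\tau'}$ in that case. I expect the main obstacle to be proving the boxed identity rigorously: it requires a short but finicky case analysis according to whether $\alpha$ and $\beta$ are nonloop edges, loops, or markings, with special care for the $\mathcal{L}^v_\pi(0)$-indices --- which are loops upstairs in $\H$ but genuine edges in $\G$, so behave asymmetrically under the contraction of $e_j$ --- and for parallel-edge configurations; one must also handle the degenerate situations in which one side of $U$ is empty, where the ``same side'' relation carries no information about the swap yet the two uncontractions are nonetheless isomorphic, so nothing is actually lost.
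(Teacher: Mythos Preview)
Your proposal is correct and follows essentially the same route as the paper: both arguments read $\mathcal{L}^v_\pi(0)$ off the $j$th column of $\Q^\G_\tau$ (the entries equal to $2$), and then use that intersection numbers drop by exactly one across the split to pin down the side partition up to the swap $v_1\leftrightarrow v_2$. Your packaging via the single comparison formula $\langle\alpha,\beta\rangle^\G_\tau=\langle\delta_j(\alpha),\delta_j(\beta)\rangle^\H_\pi-[\text{opposite sides}]$ is slightly cleaner than the paper's sequence of contradiction arguments (which treat $\mathcal{N}$, then $\mathcal{L}$, then $I$ separately), but the content is the same. One small remark: your anticipated difficulty with the $\mathcal{L}^v_\pi(0)$-indices is a non-issue, since those indices are excluded from $U$ by construction and the identity is only asserted for $\alpha,\beta\in U$.
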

\begin{proof}
	Given such an ordered partition, we form $(\G, \tau)$ by removing the vertex $v$ from $\H$, and replacing it with an edge $e_j$ which will be labelled $j$ by $\tau$ (since $b^1(\H) = g$, any graph in $\Gamma_{g,n}$ which contracts to it must do so by a nonloop edge). Label the vertices of $e_j$ as $v_1$ and $v_2$. Then all the edges in $\H$ are relabelled by $\delta^j \circ \pi : E(\H) \to [p]$. For $i = 1, 2$, the edges, loops, and markings which now correspond to $\delta^j\mathcal{N}^v_\pi(i)$, $\delta^j\mathcal{L}^v_\pi(i)$, and $I^v_\pi(i)$, respectively, are shifted to vertex $v_i$ of $\G$. Finally, the loops in $\H$ now labelled as $\delta^j\mathcal{L}^v_\pi(0)$ are turned into nonloop edges, which become parallel to $e_j$ in $\G$. The stated inequalities are required to ensure that the resulting graph is stable. See Figure \ref{juncontract} to visualize this procedure.

\begin{figure}[h]
	\centering
	\includegraphics[scale=1]{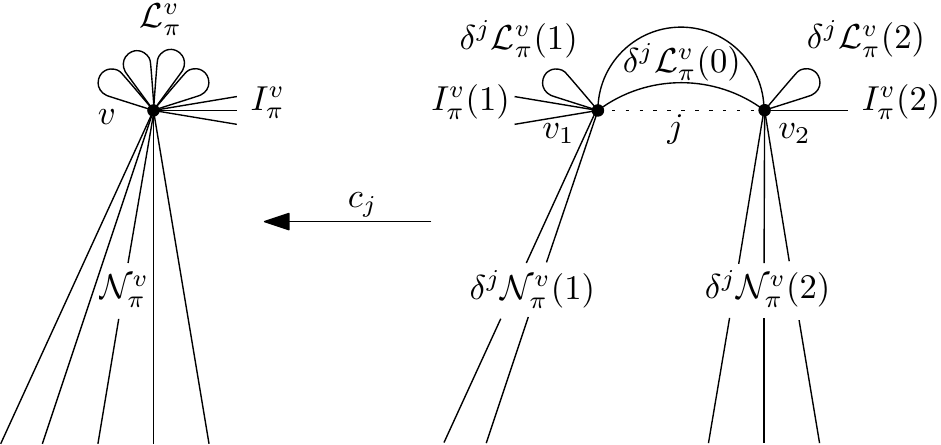}
	\caption{Constructing a $j$-uncontraction given three ordered partitions as in Lemma \ref{HowtoUncontract}}
	\label{juncontract}
\end{figure}	
	It is clear that if we switch the roles of $v_1$ and $v_2$ in the described process, we get an isomorphic edge-labelled graph. Conversely, suppose we are given two sets of ordered partitions as in the statement of the lemma, such that the resulting $j$-uncontractions satisfy $\Q^\G_\tau = \Q^{\G'}_{\tau'}$. Then it is immediate that we must have $\delta^j\mathcal{L}^v_\pi(0) = \delta^j\mathcal{L}^v_\pi(0)'$, for these will index the entries in the $j$th column of $\Q^\G_{\tau}$, resp. $\Q^{\G'}_{\tau'}$ which are equal to $2$. Next, we claim that $\mathcal{N}^v_\pi(1) = \mathcal{N}^v_\pi(1)'$ or $\mathcal{N}^v_\pi(1) = \mathcal{N}^v_\pi(2)'$. Otherwise, if $\mathcal{N}^v_\pi(1) \neq \mathcal{N}^v_\pi(1)'$, but these have an element $\alpha$ in common, then it would not be possible for the resulting intersection matrices to coincide: if, for example, there were $\beta \in \mathcal{N}^v_\pi(1)$ with $\beta \notin \mathcal{N}^v_{\pi}(1)'$, then we would necessarily have \[\langle \delta^j(\alpha), \delta^j(\beta) \rangle^{\G'}_{\tau'} \lneq \langle \delta^j(\alpha), \delta^j(\beta) \rangle^{\G}_{\tau}. \] We conclude the proof by noting that by the equality of intersection matrices, if $\mathcal{N}^{v}_\pi(1) = \mathcal{N}^v_\pi(2)'$, then we must also have $\mathcal{L}^{v}_\pi(1) = \mathcal{L}^v_\pi(2)'$ and $I^{v}_\pi(1) = I^v_\pi(2)'$, and similarly, if $\mathcal{N}^{v}_\pi(1) = \mathcal{N}^v_\pi(1)'$, then we also have $\mathcal{L}^{v}_\pi(1) = \mathcal{L}^v_\pi(1)'$ and $I^{v}_\pi(1) = I^v_\pi(1)'$.
\end{proof}
We now prove Proposition \ref{Reconstruction}, which states that any $(\G, \tau)$ in $\Gamma_{g, n}^{\EL}$ with $b^1(\G) = g$ is determined up to isomorphism by $\D^{\G}_\tau$ and $\Q^{\G}_\tau$.
\begin{proof}[Proof of Proposition \ref{Reconstruction}]
	Lemma \ref{HowtoUncontract} implies that if we are given $\Q^\G_{\tau}$, we can determine $(\G, \tau)$ up to isomorphism by finding a pair $(\H, \pi)$ and a vertex $v \in V(\H)$ such that $(\G, \tau)$ is a $j$-uncontraction of $(\H, \pi)$ at $v$. To do this, we again use the observation that since $b^1(\G) = g$, then for any nonloop edge $e \in E(\G)$ and vertex $v$ of $e$, we have
	\[ \val(v) + |m_{\G}^{-1}(v)| \lneq \val(c_e(e)) + |m_{\G/e}^{-1}(c_e(e))|. \]
	Therefore, we can take $(\H, \pi) = (\G/e_j, \tau_j)$, where $j$ is chosen so that $\G/e_j$ contains a vertex $\bar{v}$ of maximal valence plus number of markings, across all graphs in the list $\D^{\G}_\tau$. Then, by the above inequality and the maximality of $\bar{v}$, it must be that the valence of $\bar{v}$ is strictly greater than the valence of any vertex in $\G$. As such, $(\G, \tau)$ must be a $j$-uncontraction of $(\G/e_j, \tau_j)$ at $\bar{v}$, and we are done.
\end{proof}
Thus we have proven Theorem \ref{Reconstruct} in a constructive fashion: using the arguments in the proofs of Lemma \ref{HowtoUncontract} and Proposition \ref{Reconstruction}, together with the algorithmic proof (Figure \ref{DecisionTree}) of Proposition \ref{DataTransfer}, we get an explicit procedure to reconstruct $(\G, \tau)$ from $\mathcal{D}^{\G}_\tau$, given that $b^1(\G) = g$ and $|V(\G)| \geq 3$.
\section{Calculations in $\V^{2}_{g, n}$ and $\V^3_{g, n}$}\label{Calculations}
In this section we will prove Theorem \ref{Image}, which we restate here for convenience.
\begin{customthm}{1.6}
	Suppose that $g, n \geq 0$ and $2g - 2 + n \geq 3$, and let $\Phi \in \Aut(\Delta_{g, n})$. If $n = 0$, so $g \geq 2$, then we have
\[\Phi|_{\V^{2}_g} = \mathrm{Id}|_{\V^2_g}. \]
If $n \geq 1$, then there exists a unique element $\sigma \in S_n$ such that 
\[\Phi|_{\V^{2}_{g, n}} = f_\sigma|_{\V^2_{g, n}}. \]
\end{customthm}

In other words, Theorem \ref{Image} states that the image of the restriction map 
\[\rho^2_{g, n}: \Aut(\Delta_{g, n}) \to \Aut(\V^{2}_{g, n}) \]
is isomorphic to $S_n$. Together with the injectivity of the restriction map (Theorem \ref{TwoVertex}), Theorem \ref{Image} will imply the main result of the paper, that $\Aut(\Delta_{g,n}) \cong S_n$ for $2g - 2 + n \geq 3$.

As discussed earlier (e.g. in the proof of Theorem \ref{TwoVertex}), the purity of the subcomplex $\V^{i}_{g, n}$ implies that any automorphism thereof is completely determined by its action on the set of top-dimensional faces $\V^{i}_{g, n}[g + i - 2]$, which correspond to isomorphism classes of $\Gamma^{\EL}_{g,n}$-objects $(\G, \tau)$ satisfying $b^1(\G) = g$, $|V(\G)| = i$, and $|E(\G)| = g + i - 1$. 

Motivated by this observation, our first intermediate result towards the proof of Theorem \ref{Image} is that any $\Phi \in \Aut(\Delta_{g,n})$ preserves the ``weak" isomorphism class of a top-dimensional simplex $[\G, \tau] \in \V^{2}_{g, n}[g]$, in the sense that $(\G, \tau)$ and $(\Phi \G, \Phi \tau)$ are isomorphic up to changing their respective marking functions $m_\G: I_n \to V(\G)$, $m_{\Phi \G}: I_n \to V(\Phi \G)$. This motivates the following definition.
\begin{defn}\label{WeakIsomorphism}
	Given two objects $(\G, \tau), (\G', \tau')$ of $\Gamma_{g, n}^{\EL}$ with $\G = (G, w, m)$, and $\G' = (G', w', m')$, we say that $(\G, \tau)$ and $(\G', \tau')$ are \textsl{\textbf{weakly isomorphic}} if there exists a $\mathsf{Graph}_g$-isomorphism \[\varphi: (G, w) \to (G', w')\] making the diagram
	\[\begin{tikzcd}
	&E(G') \arrow[dr, "\tau'"] \arrow[rr, "\varphi^*"] & &E(G) \arrow[dl, "\tau"']\\
	& &\lbrack p \rbrack &
	\end{tikzcd} \]
	commute. Such a map $\varphi$ is called a \textsl{\textbf{weak isomorphism of pairs}}, and is denoted with a dashed arrow \[\varphi: (\G, \tau) \dashrightarrow (\G', \tau').\]
\end{defn}
When $n = 0$, two pairs are weakly isomorphic if and only if they are isomorphic. The point of this definition is that when $n \geq 1$, the notion of weak isomorphism of pairs helps us to characterize orbits of the $S_n$-action on $\Delta_{g, n}$. Indeed,
\begin{quote}($\star$)
	given $\Phi \in \Aut(\Delta_{g,n})$ and $[\G, \tau] \in \Delta_{g,n}[p]$, then there exists a permutation $\sigma$ such that
	\[[\Phi\G, \Phi \tau] = f_\sigma[\G, \tau] \]
	if and only if, upon any choice of equivalence class representatives, there exists a weak isomorphism of pairs $\varphi: (\G, \tau) \dashrightarrow (\Phi\G, \Phi\tau)$ such that
	\begin{equation}\label{MarkingsPreserved}
	|m_{\Phi\G}^{-1}(\varphi(v))| = |m_{\G}^{-1}(v)|
	\end{equation}
	for all $v \in V(\G)$.
\end{quote}
See Figure \ref{WeakExample} for an example of weakly isomorphic, but not isomorphic, pairs.
\begin{figure}[h]
	\centering
	\includegraphics[scale=1]{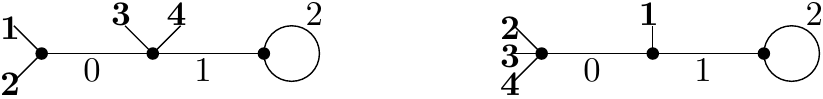}
	\caption{Two pairs in $\Gamma_{1, 4}^{\mathrm{EL}}$ which are weakly isomorphic but not isomorphic.}
	\label{WeakExample}
\end{figure}	

We now describe the strategy behind the proof of Theorem \ref{Image}. First, in \S \ref{WeakIsoPreserved} we will prove Proposition \ref{WeakPreserved}, which states that any $\Phi \in \Aut(\Delta_{g, n})$ preserves the weak isomorphism classes of simplices in $\V^{2}_{g, n}[g]$. This will prove Theorem \ref{Image} for $n = 0$. (Corollary \ref{Weak}).

In \S \ref{Orbits}, we make various counts of cells of $\V^{3}_{g, n}$ which have particular simplices $[\G, \tau]$ in $\V^{2}_{g, n}$ as faces; these counts will suffice to establish that the weak isomorphisms
\[\varphi: (\G, \tau) \dashrightarrow (\Phi \G, \Phi \tau) \]
for $[\G, \tau] \in \V^{2}_{g, n}[g]$ can be chosen to preserve the cardinality of markings on each vertex as in Equation \ref{MarkingsPreserved}. In other words, any $\Phi \in \Aut(\Delta_{g,n})$ must preserve the $S_n$-orbits of simplices in $\V^{2}_{g, n}[g]$ (Corollary \ref{SigmaExistence}). This also proves Theorem \ref{Image} for $n = 1$ (Corollary \ref{UniqueMarking}).

Finally, we treat the case $n \geq 2$ in \S\ref{PositiveGenus} and \S \ref{GenusZero}. To do so, we will make basic observations about the poset of geometric cells in $\Delta_{g, n}$, and study the action of $\Aut(\Delta_{g, n})$ on certain simplices of $\V^{3}_{g, n}$. After this we will show that any $\Phi \in \Aut(\Delta_{g,n})$ must act as a unique element of $S_n$ on $\V^{2}_{g, n}$ (Propositions \ref{FinalReformulation} and \ref{FinalZero}), thus completing the proof of Theorem \ref{Image}.

\subsection{Weak isomorphism classes of simplices are preserved (and the case $n = 0$)}\label{WeakIsoPreserved}
\begin{prop}\label{WeakPreserved}
	Suppose $\Phi \in \Aut(\Delta_{g,n})$, and let $[\G, \tau] \in \V^{2}_{g, n}[g]$. Then, for any representatives $(\G, \tau), (\Phi\G, \Phi\tau)$, there exists a weak isomorphism
	\[\varphi: (\G, \tau) \dashrightarrow (\Phi\G, \Phi\tau). \]
\end{prop}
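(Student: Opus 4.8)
The plan is to reduce the proposition to a statement about how the edge-bijection $\Phi_{\G}\colon E(\G)\to E(\Phi\G)$ of Notation \ref{PhiG} treats the loops of $\G$, and then to certify that statement by a characterization phrased only in terms of face maps and cycles, which are the features $\Phi$ is already known to respect.

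\emph{Step 1 (reduction).} By Proposition \ref{GenusPreserved} both $\G$ and $\Phi\G$ have $b^1=g$, hence all vertex weights zero; since $[\G,\tau]\in\V^2_{g,n}[g]$, both are connected two-vertex graphs with $g+1$ edges, and by Corollary \ref{CyclesPreserved} the bijection $\Phi_\G$ carries loop-labels to loop-labels and nonloop-labels to nonloop-labels. For weight-zero two-vertex graphs a weak isomorphism $(\G,\tau)\dashrightarrow(\Phi\G,\Phi\tau)$ is exactly a graph isomorphism $\G\to\Phi\G$ whose induced edge-map is $\Phi_\G$; unwinding incidences, such an isomorphism exists if and only if $\Phi_\G$ carries the unordered partition $\{L_1,L_2\}$ of the loop-labels of $\G$ by incident vertex onto the corresponding partition $\{L_1',L_2'\}$ for $\Phi\G$. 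Since a partition is recovered from its ``same block'' relation, it suffices to prove: for loop-labels $i,j$ of $\G$, the loops $\tau^{-1}(i),\tau^{-1}(j)$ share a vertex of $\G$ if and only if $\Phi_\G(\tau^{-1}(i)),\Phi_\G(\tau^{-1}(j))$ share a vertex of $\Phi\G$.

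\emph{Step 2 (the characterization).} I claim that two loops $\tau^{-1}(i),\tau^{-1}(j)$ of a facet $[\G,\tau]\in\V^2_{g,n}[g]$ lie on a common vertex if and only if there exist a facet $[\H,\pi]\in\V^3_{g,n}[g+1]$ and an index $k$ with $d_k[\H,\pi]=[\G,\tau]$ such that $\{\delta^k(i),\delta^k(j)\}$ is a $2$-cycle of $(\H,\pi)$. For the forward direction, if the two loops share the vertex $v$, build $\H$ by splitting $v$ into vertices $v',v''$, converting $\tau^{-1}(i),\tau^{-1}(j)$ into a parallel pair of nonloop edges joining $v'$ and $v''$, adjoining one more edge $e_k$ joining $v'$ and $v''$, and moving every remaining half-edge, loop and marking at $v$ onto $v'$; then $v''$ has weight zero and valence three, $v'$ has valence at least three, $b^1(\H)=b^1(\G)=g$, and contracting $e_k$ (with the evident labelling making $d_k$ return $(\G,\tau)$) reconstructs $(\G,\tau)$, so $[\H,\pi]$ is a facet of $\V^3_{g,n}$ with the asserted property. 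Conversely, if such $\H,k$ exist then $e_k$ must be a nonloop edge joining two distinct vertices of $\H$ (otherwise $d_k$ does not lower the vertex count), and the only way the parallel pair $\{\delta^k(i),\delta^k(j)\}$ can become the loops $\tau^{-1}(i),\tau^{-1}(j)$ of $\G=\H/e_k$ is for both of its endpoints to be identified by the contraction, which puts those loops on a common vertex.

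\emph{Step 3 (conclusion) and the main obstacle.} The right side of the characterization is invariant under replacing $[\G,\tau]$ by $\Phi[\G,\tau]=[\Phi\G,\Phi\tau]$ with $i,j$ held fixed: $\Phi$ restricts to a bijection of $\V^3_{g,n}[g+1]$ (Proposition \ref{VerticesPreserved}) commuting with $d_k$, and by Corollary \ref{CyclesPreserved} a subset of indices is a $2$-cycle of $(\H,\pi)$ exactly when it is a $2$-cycle of $(\Phi\H,\Phi\pi)$. Applying Step 2 to $[\G,\tau]$ and to $\Phi[\G,\tau]$ gives the equivalence required at the end of Step 1, hence the weak isomorphism $\varphi\colon(\G,\tau)\dashrightarrow(\Phi\G,\Phi\tau)$. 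The crux is Step 2: the invariants that $\Phi$ obviously preserves — the list of faces of $[\G,\tau]$, its stabilizer in $\sS_{g+1}$ — do \emph{not} separate, for instance, a two-vertex graph with two loops at one vertex from one with a loop at each vertex, so one is forced into $\V^3_{g,n}$; the point is that a single, uniformly-available uncontraction detects a shared vertex, so no stability-driven case analysis is needed and no circular appeal to $\Phi$'s behaviour on three-vertex graphs arises.
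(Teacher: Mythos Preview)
Your argument is correct and takes a genuinely different route from the paper's.

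The paper's proof attacks the same reduction as your Step~1, but then argues via stabilizers: if $i,j$ label loops sharing a vertex of $\G$, then the transposition $(i,j)$ lies in $\Stab_{\sS_{g+1}}[\G,\tau]=\Stab_{\sS_{g+1}}[\Phi\G,\Phi\tau]$, so $\Phi\G$ must admit an automorphism swapping those two loops and fixing every other edge. When the loops of $\Phi\G$ lie on distinct vertices, such an automorphism can still exist, but only in a single configuration with $n=0$ (two vertices, exactly the loops $i,j$ one at each vertex, and a bundle of at least three parallel nonloop edges). The paper then rules out this exceptional case by passing to a specific three-vertex uncontraction and again comparing stabilizers.

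Your approach sidesteps the stabilizer analysis entirely and replaces it with the uniform uncontraction criterion of Step~2: two loops share a vertex if and only if they arise from a $2$-cycle in some facet of $\V^3_{g,n}$ lying over $[\G,\tau]$. This is clean because the right-hand side is manifestly $\Phi$-invariant by cycle preservation and compatibility with face maps, so no case split on $n$ or on the shape of $\Phi\G$ is needed. The trade-off is that you appeal to Corollary~\ref{CyclesPreserved} (preservation of $2$-cycles) in an essential way, whereas the paper's main thrust uses only the cruder fact that stabilizers are preserved; but since cycle preservation is already available at this point in the paper, your route is arguably the more economical of the two.
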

\begin{proof}
	Let $[\G, \tau] \in \V^{2}_{g, n}[g]$. Since $b^1(\G) = g$ and $\Phi$ preserves loop indices (Corollary \ref{CyclesPreserved}), it suffices to show that $i, j \in [g]$ correspond to loops on the same vertex of $\G$ if and only if they correspond to loops on the same vertex of $\Phi \G$. To do this, we use the fact that $\Phi$ preserves loops together with $\sS$-equivariance. Indeed, if $i, j$ label loops on the same vertex of $\G$, then both must label loops of $\Phi\G$, but we also must have $(i, j) \in \Stab_{\sS_{g + 1}}[\Phi\G, \Phi\tau]$, i.e. there must be an automorphism of $\Phi\G$ transposing the two loops labelled by $i$ and $j$, but which fixes every other edge of $\Phi \G$.
	
	Suppose toward a contradiction that $i$ and $j$ label loops on distinct vertices of $\Phi \G$. Then the only way for $\Phi\G$ to have an automorphism of the required type is if $n = 0$, and if $\Phi \G$ has a full subgraph as in Figure \ref{WeakIso1}, where $i$ and $j$ are the only loop indices of $\Phi\G$. Note that $\Phi \G$ must have at least three nonloop edges by stability, because in this situation, $\G$ must also have $i$ and $j$ as its only loop indices, but they lie on a single vertex. 
	
	\begin{figure}[h]
	\centering
	\includegraphics[scale=1]{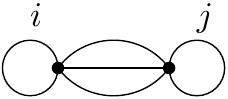}
	\caption{}
	\label{WeakIso1}
	\end{figure}
	
	Therefore, there exists a graph $\hat{\G}$, which has a full subgraph isomorphic to the one in Figure \ref{nZeroKey}, and no additional loops, which contracts to $\G$, and such that $\Phi \hat{\G}$ contracts to $\Phi \G$. 
	
	\begin{figure}[h]
		\centering
		\includegraphics[scale=1]{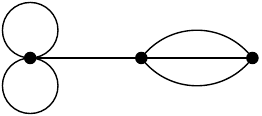}
		\caption{}
		\label{nZeroKey}
	\end{figure}
	
	Since it contracts to $\Phi \G$, the graph $\Phi \hat{\G}$ cannot have two loops on the same vertex. Because $|V(\Phi\hat{\G})| \geq 3$, this implies $\Phi\hat{\G}$ cannot have any automorphisms which transpose two loops while leaving every other edge fixed. However, $\hat{\G}$ clearly has this property. Since $\Phi$ preserves loops, this contradicts $\sS$-equivariance: for any edge-labelling $\hat{\tau}$ of $\hat{\G}$, there will be an element in the $\sS_{g + 2}$-stabilizer of $[\hat{\G}, \hat{\tau}]$ which is not in the $\sS_{g + 2}$-stabilizer of $[\Phi\hat{\G}, \Phi\hat{\tau}]$.
\end{proof}

Since the notions of weak isomorphism and $\Gamma_{g, n}^{\EL}$-isomorphism coincide when $n = 0$, an immediate corollary of Proposition \ref{WeakPreserved} is the $n = 0$ case of Theorem \ref{Image}, and hence Theorem \ref{Main}.

\begin{cor}\label{Weak}
	Fix $g \geq 2$. Then for any $\Phi \in \Aut(\Delta_{g})$, we have
	\[\Phi|_{\V^{2}_g} = \mathrm{Id}|_{\V^2_{g}}, \]
	hence $\Aut(\Delta_{g})$ is trivial.
\end{cor}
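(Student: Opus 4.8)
The plan is to read off the case $n = 0$ of Theorem \ref{Image} directly from Proposition \ref{WeakPreserved}, and then invoke the injectivity statement of Theorem \ref{TwoVertex}. When $n = 0$ a weak isomorphism of pairs in the sense of Definition \ref{WeakIsomorphism} is nothing but an isomorphism in $\Gamma_g^{\EL}$, so Proposition \ref{WeakPreserved} says exactly that for every $\Phi \in \Aut(\Delta_g)$ and every top-dimensional simplex $[\G, \tau] \in \V^2_g[g]$ the pairs $(\G, \tau)$ and $(\Phi\G, \Phi\tau)$ are isomorphic in $\Gamma_g^{\EL}$. Since we have fixed a skeleton $\mathsf{\Gamma}_g^{\EL}$ once and for all (Notation \ref{Skeleta}), isomorphic pairs represent the same object of $\mathsf{\Gamma}_g^{\EL}$; that is, $[\Phi\G, \Phi\tau] = [\G, \tau]$. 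Hence $\Phi$ fixes every element of $\V^2_g[g]$.

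To upgrade this to $\Phi|_{\V^2_g} = \mathrm{Id}|_{\V^2_g}$ I would use purity. By Proposition \ref{PureDimensional} the subcomplex $\V^2_g$ is pure of dimension $g$, so any $\xi \in \V^2_g[p]$ can be written as $\Delta_g(\iota)[\G, \tau]$ for some injection $\iota: [p] \to [g]$ and some $[\G, \tau] \in \V^2_g[g]$; naturality of $\Phi$ then gives $\Phi_p(\xi) = \Delta_g(\iota)(\Phi_g[\G, \tau]) = \Delta_g(\iota)[\G, \tau] = \xi$, so $\Phi$ restricts to the identity on all of $\V^2_g$. Finally, when $g \geq 3$ we have $2g - 2 \geq 4$, so the restriction map $\rho^2_{g, 0}: \Aut(\Delta_g) \to \Aut(\V^2_g)$ is injective by Theorem \ref{TwoVertex}, and $\Phi|_{\V^2_g} = \mathrm{Id}$ forces $\Phi = \mathrm{Id}$. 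When $g = 2$ the inequality $2g - 2 + n \geq 3$ fails, so Theorem \ref{TwoVertex} does not apply; but $\Aut(\Delta_2)$ has already been shown to be trivial in Example \ref{SmallExample}, so nothing more is needed there.

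I do not expect a genuine obstacle in this corollary: all of the substantive content is contained in Proposition \ref{WeakPreserved} and Theorem \ref{TwoVertex}, which are already established. The only points that deserve care are the translation from ``isomorphic pairs'' to ``equal simplices in the chosen skeleton,'' the appeal to purity to pass from the facets of $\V^2_g$ to arbitrary simplices, and remembering that the boundary value $g = 2$ lies outside the range of Theorem \ref{TwoVertex} and must instead be taken from Example \ref{SmallExample}.
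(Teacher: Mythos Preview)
Your proposal is correct and follows essentially the same approach as the paper: observe that for $n = 0$ weak isomorphism coincides with $\Gamma_g^{\EL}$-isomorphism, so Proposition \ref{WeakPreserved} immediately gives $\Phi|_{\V^2_g} = \mathrm{Id}$, and then Theorem \ref{TwoVertex} finishes the job. You have simply made explicit the purity step and the $g = 2$ boundary case (handled via Example \ref{SmallExample}), both of which the paper leaves implicit.
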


\subsection{The case $n \geq 1$: $\Aut(\Delta_{g,n})$ preserves $S_n$-orbits}\label{Orbits}
As we have now proven Theorem \ref{Image}, and hence Theorem \ref{Main}, when $n = 0$, we assume $n \geq 1$ for the remainder of the paper. 

To prove Theorem \ref{Image} in this case, it is convenient to introduce notation parameterizing the genus $g$ graphs in $\Gamma_{g, n}$ with two vertices. Fix a vertex set $\{v_1, v_2 \}$. For two integers $k, \ell$ such that $k, \ell\geq 0$ and $k + \ell \leq g$, we fix $B^{k, \ell}$ to be the graph with vertex set $\{v_1, v_2\}$, where $v_1$ and $v_2$ are connected by $g - (k + \ell) + 1$ edges, and such that $v_1$ supports $k$ loops while $v_2$ supports $\ell$ loops. By construction, $B^{k, \ell}$ has genus $g$ and $g + 1$ edges, and we have graph isomorphisms $B^{k, \ell} \cong B^{\ell, k}$. Up to isomorphism, any graph in $\Gamma_{g, n}$ with two vertices and $g + 1$ edges is a marked version of $B^{k, \ell}$ for some $k, \ell$. We say a subset $A \subseteq I_n$ is \textsl{\textbf{$(k, \ell)$-stable for $(g, n)$}} if, when we endow the graph $B^{k, \ell}$ with the marking function \[m^{k, \ell}_A: I_n \to V(B^{k, \ell})\] defined by
\begin{equation}\label{Marking}
m^{k, \ell}_A(x) = \begin{cases} v_1 &\text{ if }x\in A\\
v_2 &\text{ otherwise,}
\end{cases}
\end{equation}
 the resulting marked graph defines an object of $\Gamma_{g, n}$. 
\begin{defn}
	A triple $(k, \ell, A)$ is \textsl{\textbf{$(g, n)$-admissible}} if $k, \ell \geq 0$, $k + \ell \leq g$, and $A$ is $(k, \ell)$-stable for $(g, n)$.
\end{defn}
When we have implicitly or explicitly fixed $(g, n)$, we will shorten the phrase ``$(k, \ell)$-stable for $(g, n)$" into ``$(k, \ell)$-stable" and write ``admissible" rather than ``$(g, n)$-admissible." By our definition, $A$ is $(k, \ell)$-stable if and only if $A^c$ is $(\ell, k)$-stable. Given a $(g, n)$-admissible triple $(k, \ell, A)$, we define a $\Gamma_{g,n}$-object
\[\B_{A}^{k, \ell} = (B^{k, \ell}, w, m^{k, \ell}_A)\]
where $w: V(B^{k, \ell}) \to \Z_{\geq 0}$ is identically $0$, and $m^{k, \ell}_A : I_n \to V(B^{k, \ell})$ is the marking function defined in (\ref{Marking}).

Any pair $(\G, \tau)$ in $\Gamma_{g, n}^{\EL}$ with $|V(\G)| = 2$ and $b^1(\G) = g$ is isomorphic to a pair of the form $(\B_{A}^{k, \ell}, \pi)$ for some edge-labelling $\pi$ of $\B_{A}^{k, \ell}$. Moreover, given any $\Phi \in \Aut(\Delta_{g, n})$, Proposition \ref{WeakPreserved} implies that we can choose a $(k, \ell)$-stable subset $\Phi(A)$ such that an equality
\begin{equation}\label{Phi(A)}
\Phi[\B^{k, \ell}_A, \pi] = [\B^{k, \ell}_{\Phi(A)}, \pi]
\end{equation}
holds; here we are using that the edge sets of both graphs are identified with $E(B^{k, \ell})$. When $(k, \ell) = (0, 0)$, the choice of $\Phi(A)$ in (\ref{Phi(A)}) is not unique because we have $(\B^{0,0}_A, \pi) \cong (\B^{0,0}_{A^c}, \pi)$ for any edge-labelling of $\B^{0,0}_A$. We emphasize that when $(k, \ell) \neq (0, 0)$, the choice of $\Phi(A)$ is unique: the pairs $(\B^{k, k}_{A}, \pi)$ and $(\B^{k, k}_{A^c}, \pi)$ are not isomorphic for $k > 0$ because of the way we defined the marking functions in (\ref{Marking}).

To prove that any $\Phi \in \Aut(\Delta_{g,n})$ preserves $S_n$-orbits, we need to show that for all $(k, \ell)$-stable $A \subseteq I_n$, the choice of $\Phi(A) \subseteq I_n$ in (\ref{Phi(A)}) can be made so that $|A| = |\Phi(A)|$.	We do this by establishing some numerical invariants of simplices in $\V^{2}_{g, n}$ that must be preserved by automorphisms, and which uniquely determine $S_n$-orbits. When $g = 0$, this approach is the same as that used by Abreu and Pacini ~\cite{AbreuPacini} to prove that $\Aut(\Delta_{0,n})$ preserves $S_n$-orbits. The idea of counting uncontractions of graphs also arises in Bridson and Vogtmann's work on symmetries of Outer space ~\cite{bridson2001}.
\begin{defn}
	For $g \geq 0, n \geq 1$ such that $2g - 2 + n \geq  3$, we define a function 
	\[\mu_{g, n}: \{(g, n)\text{-admissible triples }(k, \ell, A) \text{ with } k \geq \ell\} \to \Z_{\geq 0} \]
	as follows:
	\begin{itemize}
		\item when $\ell = 0$, we let $\mu_{g, n}(k, \ell, A)$ be the number of distinct isomorphism classes of graphs $\G$ in $\Gamma_{g, n}$ such that $|V(\G)| = 3$, $\G$ has neither $3$-cycles nor loops, and there exists an edge $e$ of $\G$ such that $\G/e \cong \B^{k, \ell}_A$.
		\item when $k, \ell > 0$, we let $\mu(k, \ell, A)$ be the number of distinct isomorphism classes of graphs $\G$ in $\Gamma_{g, n}$ such that $|V(\G)| = 3$, $\G$ has neither $3$-cycles nor loops, and there exists an edge $e$ of $\G$ such that \[\G/e \cong \B^{k, \ell}_A/\{\text{all }\ell \text{ loops at }v_2 \}\]
	\end{itemize}
\end{defn}
The basis for the above definition of $\mu_{g, n}$ is that it is preserved under automorphisms of $\Delta_{g, n}$.
\begin{prop}\label{MuPreservation}
	Suppose $g \geq 0, n \geq 1$ such that $2g - 2 + n \geq 3$, and fix $\Phi \in \Aut(\Delta_{g, n})$. Let $(k, \ell, A)$ be an admissible triple with $k \geq \ell$, and let $\pi: E(B^{k, \ell}) \to [g]$ be an arbitrary edge-labelling. Find $\Phi(A) \subseteq I_n$ such that
	\[\Phi[\B^{k, \ell}_A, \pi] = [\B^{k, \ell}_{\Phi(A)}, \pi].  \]
	Then we have
	\[\mu_{g, n}(k, \ell, A) = \mu_{g, n}(k, \ell, \Phi(A)).  \]
\end{prop}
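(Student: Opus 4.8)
The plan is to recognize that, in both regimes of its definition, $\mu_{g, n}(k, \ell, A)$ counts isomorphism classes of one uniform kind of object. Write $S \subseteq E(B^{k, \ell})$ for the set of the $\ell$ loops based at $v_2$ (so $S = \emptyset$ when $\ell = 0$), and set $\mathcal{C}_A := \B^{k, \ell}_A / S$ (thus $\mathcal{C}_A = \B^{k, 0}_A$ when $\ell = 0$). Then $\mu_{g, n}(k, \ell, A)$ is precisely the number of isomorphism classes of $\Gamma_{g, n}$-objects $\G$ with $|V(\G)| = 3$, having no loops and no $3$-cycles, and admitting an edge $e$ with $\G/e \cong \mathcal{C}_A$. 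It therefore suffices to establish: (i) $\Phi(\mathcal{C}_A) \cong \mathcal{C}_{\Phi(A)}$, and (ii) $\Phi$ restricts to a bijection from the set counted by $\mu_{g, n}(k, \ell, A)$ onto the set counted by $\mu_{g, n}(k, \ell, \Phi(A))$.

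To prove (i), with $\pi : E(B^{k, \ell}) \to [g]$ and $\Phi(A)$ as in the statement, put $T = \pi(S) \subseteq [g]$. By the definition of the face maps of $\Delta_{g, n}$, applying $d_T$ contracts exactly the edges indexed by $T$, so $d_T[\B^{k, \ell}_A, \pi] = [\mathcal{C}_A, \cdot]$. Since any automorphism of a symmetric $\Delta$-complex commutes with all face maps, $\Phi[\mathcal{C}_A, \cdot] = d_T \, \Phi[\B^{k, \ell}_A, \pi] = d_T[\B^{k, \ell}_{\Phi(A)}, \pi]$. The crucial observation is that $\B^{k, \ell}_A$ and $\B^{k, \ell}_{\Phi(A)}$ carry the \emph{same} underlying labelled graph $(B^{k, \ell}, \pi)$ and differ only in their marking functions; hence the edges indexed by $T$ are, in both graphs, exactly the $\ell$ loops at $v_2$, and $d_T[\B^{k, \ell}_{\Phi(A)}, \pi] = [\mathcal{C}_{\Phi(A)}, \cdot]$. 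Consequently $\Phi(\mathcal{C}_A) \cong \mathcal{C}_{\Phi(A)}$.

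For (ii), recall from Notation \ref{PhiG} that $\Phi$ induces a bijection $\G \mapsto \Phi\G$ on isomorphism classes of $\Gamma_{g, n}$-objects, with inverse induced by $\Phi^{-1}$; by Proposition \ref{VerticesPreserved} it preserves the number of vertices, by Corollary \ref{CyclesPreserved} it preserves the property of containing a $j$-cycle for every $j \geq 1$ (hence loop-freeness and $3$-cycle-freeness), and by Notation \ref{PhiG} it commutes with edge-contraction, $\Phi\G / \Phi_\G(e) = \Phi(\G/e)$. So if $\G$ is one of the objects counted by $\mu_{g, n}(k, \ell, A)$, witnessed by an edge $e$ with $\G/e \cong \mathcal{C}_A$, then $\Phi\G$ has three vertices, no loops, and no $3$-cycles, and $\Phi\G / \Phi_\G(e) = \Phi(\G/e) \cong \Phi(\mathcal{C}_A) \cong \mathcal{C}_{\Phi(A)}$ by (i); thus $\Phi\G$ is counted by $\mu_{g, n}(k, \ell, \Phi(A))$. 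The same argument applied to $\Phi^{-1}$ yields the reverse inclusion, and (ii) follows, giving $\mu_{g, n}(k, \ell, A) = \mu_{g, n}(k, \ell, \Phi(A))$.

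The only delicate point is (i), and its role is to exclude the following scenario: when $k = \ell$, an automorphism could a priori interchange the two vertices of $\B^{k, k}_A$, sending $\mathcal{C}_A = \B^{k,k}_A/\{\text{loops at }v_2\}$ to $\B^{k, k}_{\Phi(A)}/\{\text{loops at }v_1\} \cong \mathcal{C}_{\Phi(A)^c}$ rather than to $\mathcal{C}_{\Phi(A)}$, and $\mu_{g,n}(k, k, \Phi(A))$ need not equal $\mu_{g, n}(k, k, \Phi(A)^c)$ in general. The observation that $T$, and hence the contracted edge set, is pinned down once $\pi$ is chosen with no reference to the marking function, is exactly what forbids this; the remainder is routine bookkeeping with the structural properties of $\Aut(\Delta_{g, n})$ already in hand.
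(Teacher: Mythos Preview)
Your argument is correct and follows essentially the same route as the paper's own proof: reinterpret $\mu_{g,n}(k,\ell,A)$ as counting $\sS$-orbits of simplices in $\V^3_{g,n}$ with no loops or $3$-cycles having the appropriate face, then invoke $\sS$-equivariance together with Corollary~\ref{CyclesPreserved}. Your treatment is somewhat more explicit than the paper's---in particular, your unification of the two cases via $\mathcal{C}_A$ and your closing paragraph isolating why the same index set $T$ works for both $\B^{k,\ell}_A$ and $\B^{k,\ell}_{\Phi(A)}$ make transparent a point the paper leaves implicit---but the underlying mechanism is identical.
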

\begin{proof}
	In the first case, when $\ell = 0$, we have that $\mu_{g, n}(k, \ell, A)$ is the number of $\sS_{g + 2}$-orbits in $\V^{3}_{g, n}[g + 1]$ which contain an element $[\G, \tau]$ such that $\G$ contains no $3$-cycles or loops, and such that $[\G, \tau]$ has $[\B^{k, \ell}_A, \pi]$ as a face. 
	
	In the second case, when $k, \ell > 0$, set $L \subseteq [p]$ for indices corresponding via $\pi$ to loops at $v_2$. Then $\mu_{g, n}(k, \ell, A)$ is the number of $\sS_{g  - \ell + 2}$-orbits in $\V^{3}_{g, n}[g - \ell + 1]$ which contain an element $[\G, \tau]$ such that $\G$ contains no $3$-cycles or loops, and such that $[\G, \tau]$ has $d_L[\B^{k, \ell}_A, \pi]$ as a face. 
	
	After reinterpreting $\mu_{g, n}$ in these ways, the proposition follows from $\sS$-equivariance, together with Corollary \ref{CyclesPreserved}, which states that any $\Phi \in \Aut(\Delta_{g,n})$ preserves the cycles of a pair $(\G, \tau)$.
\end{proof}
We now compute $\mu_{g, n}(k, \ell, A)$ for all admissible triples $(k, \ell, A)$.
\begin{prop}\label{Formulae}
	The following formulas hold for admissible triples $(k, \ell, A)$ with $k \geq \ell$:
	\begin{enumerate}
		\item when $(k, \ell) = (0, 0)$ and $g = 0$, we have
		\[\mu_{g, n}(0,0, A) = 2^{|A|} + 2^{n - |A|} - 2n - 4; \]
		\item when $(k, \ell) = (0, 0)$ and $g \geq 1$, we have 
		\[\mu_{g, n}(0,0, A) = 2^{|A|} + 2^{n - |A|} - n - 2; \]
		\item if $k = 1$, then
		\[\mu_{g, n}(k, \ell, A) = 2^{|A|} - 1; \]
		\item if $k \geq 2$, then
		\[\mu_{g, n}(k, \ell, A) = 2^{|A|}. \]
	\end{enumerate}
\end{prop}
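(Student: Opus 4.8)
The plan is to read $\mu_{g,n}(k, \ell, A)$ off directly as a count of \emph{uncontractions}. Write $\H$ for the two-vertex target graph appearing in the definition of $\mu$: thus $\H = \B^{k,\ell}_A$ when $\ell = 0$, and $\H = \B^{k,\ell}_A/\{\text{all }\ell\text{ loops at }v_2\}$ when $k, \ell > 0$. If $\G$ has three vertices, no loops and no $3$-cycles, and $\G/e \cong \H$ for some edge $e$, then contracting $e$ identifies its two endpoints, so $\H$ has exactly the two vertices ``image of $e$'' and ``surviving third vertex''; equivalently, $\G$ is obtained from $\H$ by \emph{splitting} one vertex $\bar v$ of $\H$ into two vertices $v', v''$ joined by the new edge $e$. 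The hypotheses on $\G$ rigidly constrain this splitting: each loop of $\H$ at $\bar v$ must become a non-loop edge between $v'$ and $v''$ (otherwise $\G$ would still contain a loop), and all of the edges of $\H$ joining $\bar v$ to the other vertex must be reattached to a single one of $v', v''$ — say $v'$ — since distributing even one to each of $v', v''$ would, together with $e$, produce a $3$-cycle in $\G$. Since $v''$ is then joined only to $v'$ while $v'$ is joined to the third vertex, there is no symmetry between $v'$ and $v''$, and the isomorphism class of $\G$ is pinned down by which of the markings at $\bar v$ go to $v'$. So the proof reduces to counting subsets of the marking set of $\bar v$, subject to the stability inequalities at $v'$, $v''$, and the third (un-split) vertex.

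A preliminary reduction disposes of the un-split vertex: in every case a direct computation shows its stability in $\G$ is equivalent to the stability of the corresponding vertex of $\B^{k,\ell}_A$, hence is automatic from the admissibility of $(k, \ell, A)$. So only the two new vertices $v', v''$ require attention. Next I would treat the cases $k \geq 1$: here $\H$ has $k \geq 1$ loops at $v_1$, so one is forced to split $\bar v = v_1$; the $k$ loops become $k$ parallel edges between $v'$ and $v''$, the $g - k - \ell + 1$ edges to $v_2$ all attach to $v'$, and the $|A|$ markings split freely. The new vertex $v''$ has valence $k + 1$, so it is stable with no markings exactly when $k \geq 2$ and needs at least one marking when $k = 1$; the new vertex $v'$ has valence $g - \ell + 2 \geq 3$ (admissibility forces $g > \ell$ when $k \geq 1$), so it is always stable. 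As the isomorphism class of $\G$ is recorded by $S \subseteq A$, this gives $\mu_{g,n}(k, \ell, A) = 2^{|A|}$ for $k \geq 2$ and $2^{|A|} - 1$ for $k = 1$, which are formulas (4) and (3).

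For $k = \ell = 0$ the target $\H = \B^{0,0}_A$ is symmetric under interchanging its two vertices together with the exchange $A \leftrightarrow A^c$, so one may split either $v_1$ (redistributing $A$) or $v_2$ (redistributing $A^c$), and $\mu_{g,n}(0,0,A)$ is the sum of the two contributions. Splitting $v_1$: the $g + 1$ parallel edges attach to $v'$, which then has valence $g + 2$, while $v''$ has valence $1$ and therefore requires at least two of the markings of $A$; when $g = 0$ the vertex $v'$ has valence only $2$ and so additionally requires at least one marking. Counting subsets $S \subseteq A$ placed on $v'$ under these constraints, and symmetrically for $v_2$ and $A^c$, yields the two exponential terms $2^{|A|}$ and $2^{n-|A|}$ together with the polynomial corrections; the two families of isomorphism classes so obtained are disjoint, since a graph belonging to both would, after identifying which new vertex plays which role, force a new vertex to carry markings from $A$ and from $A^c$ simultaneously, hence none at all, contradicting stability. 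Assembling the contributions, and separating the cases $g = 0$ and $g \geq 1$ according to whether $v'$ is additionally constrained, produces formulas (1) and (2).

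The main obstacle is the bookkeeping in the case $(k, \ell) = (0, 0)$: it is the only case in which both new vertices can simultaneously be constrained for stability (precisely when $g = 0$), in which one must add the contributions of splitting two distinct vertices, and in which one must verify that the resulting two families of isomorphism classes are disjoint. Everywhere else the enumeration is essentially forced — the loops dictate which vertex must be split, adjacency to the third vertex breaks any symmetry between $v'$ and $v''$, and admissibility of $(k, \ell, A)$ already supplies all the stability needed except possibly at the single low-valence vertex $v''$ — so the count collapses to $2^{|A|}$ or $2^{|A|} - 1$ with no further work.
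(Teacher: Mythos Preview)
Your approach is correct and coincides with the paper's: both classify the three-vertex graphs $\G$ with no loops or $3$-cycles contracting to the target $\H$ as vertex-splittings of $\H$, observe that the no-loop and no-$3$-cycle hypotheses force the shape of the splitting (loops at $\bar v$ become parallels to $e$, and all old edges to the other vertex attach to a single side), and then enumerate isomorphism classes as partitions $I_n = A_1 \cup A_2 \cup A_3$ of the markings subject to stability at the new vertices. The paper records these constraints in a table and reads off the four formulas; your sketch for cases (1)--(2) stops just short of writing out the arithmetic, but the stability constraints you identify at $v'$ and $v''$ are exactly the ones the paper uses.
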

\begin{proof}
Let $\mathcal{P}$ be the property of having no $3$-cycles or loops. The formula for $\mu_{g, n}(k, \ell, A)$ is computed by describing maximal sets of pairwise nonisomorphic graphs having property $\mathcal{P}$ and contracting to $\H$, where $\H$ is a choice of graph depending on the triple $(k, \ell, A)$. When $\ell = 0$, we have
	\[\H = \B^{k, 0}_A, \]
	and when $\ell > 0$, we have
	\[\H = \B^{k, \ell}_A /\{\text{all }\ell \text{ loops at }v_2 \}. \]
	It is straightforward to classify isomorphism classes of stable graphs $\G$ having $\mathcal{P}$ and contracting to $\H$; see Figure \ref{mu-g-n}. The formulas of $\mu_{g, n}$ are then calculated by counting partitioins $I_n = A_1 \cup A_2 \cup A_3$ subject to the constraints in column 4 of Figure \ref{mu-g-n}. For example, we see that if $k \geq 2$ and $A$ is $(k, \ell)$-stable, then $\mu_{g, n}(k, \ell, A)$ is given by the number of partitions $I_n = A_1 \cup A_2 \cup A_3$ such that $A_2 \cup A_3 = A$; this is of course equal to the number $2^{|A|}$ of subsets of $A$. The other formulas are computed similarly by accounting for the additional constraints which appear in the fourth column when $k \leq 1$.
\end{proof}	
	\begin{figure}[h]
		\centering
		\includegraphics[scale=1]{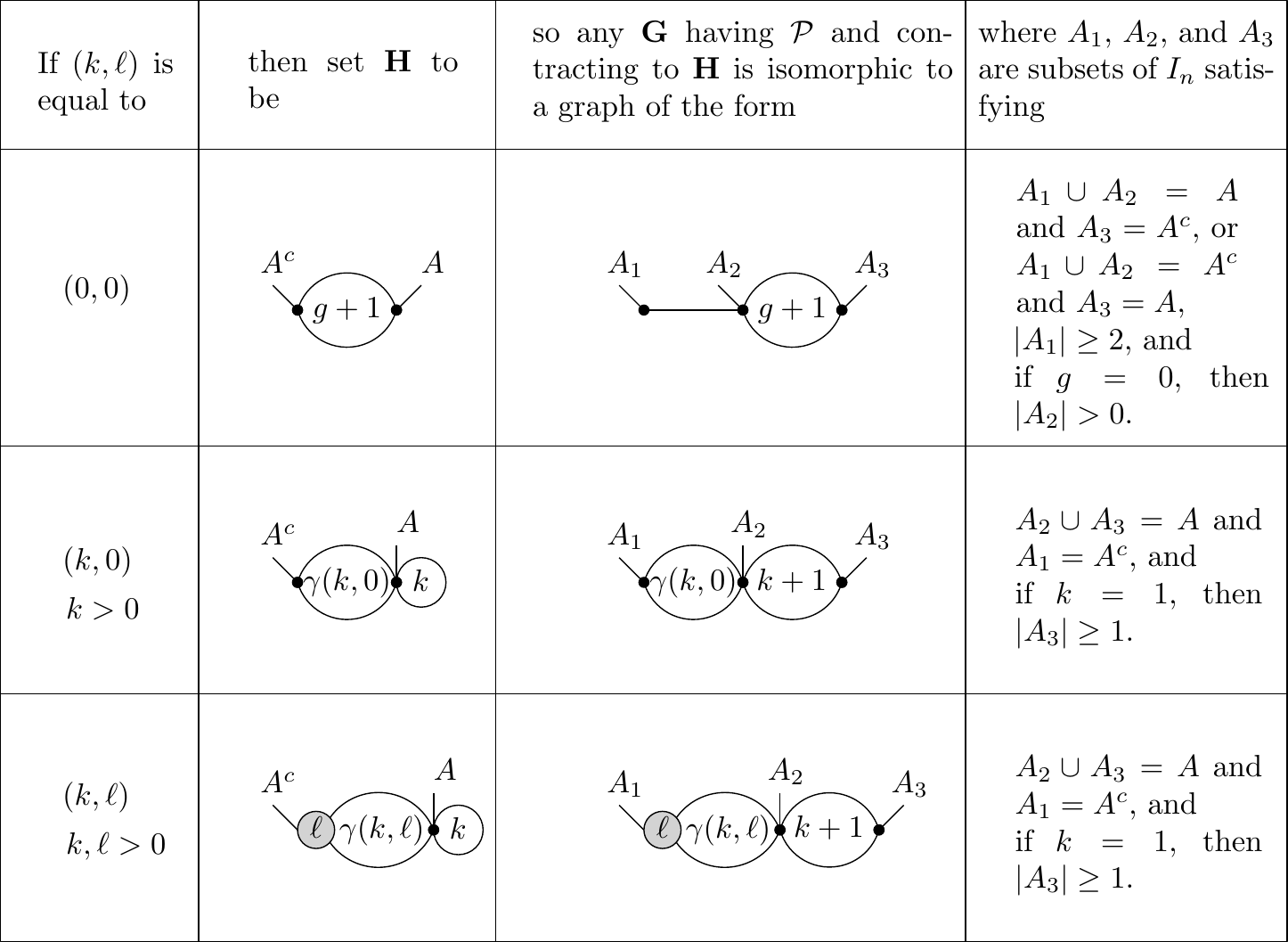}
		\caption{Computing $\mu_{g, n}$ for fixed $(k, \ell, A)$ as in Proposition \ref{Formulae}. In each case, $\mu_{g, n}(k, \ell, A)$ is given by the number of partitions $I_n = A_1 \cup A_2 \cup A_3$ satisfying the conditions in the fourth column. Here we have set $\gamma(k, \ell) : = g - k - \ell + 1$, and adopted the convention that a loop containing the number $k$ means that there are $k$ loops based at that vertex. $\mathcal{P}$ is the property of having neither $3$-cycles nor loops.}
		\label{mu-g-n}
	\end{figure}

The appeal of the formulas for $\mu_{g, n}$ given by Proposition \ref{Formulae} is that unless $(k, \ell) = (0, 0)$, they are uniquely determined by $|A|$. This observation allows us to prove that $\Aut(\Delta_{g,n})$ preserves $S_n$-orbits of simplices in $\V^{2}_{g, n}[g]$.
\begin{cor}\label{SigmaExistence}
	Suppose $g \geq 0, n\geq 1$, and $2g - 2 +n \geq 3$. Fix $\Phi \in \Aut(\Delta_{g, n})$. Then, given any $(g, n)$-admissible triple $(k, \ell, A)$, we can find $\Phi(A) \subseteq I_n$ such that $|A| = |\Phi(A)|$ and
	\[\Phi[\B^{k, \ell}_A, \pi] = [\B^{k, \ell}_{\Phi(A)}, \pi]. \]
	This choice of $\Phi(A)$ is unique unless $(k, \ell) = (0, 0)$ and $|A| = n/2$.
\end{cor}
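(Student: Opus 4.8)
The plan is to derive the corollary directly from Propositions~\ref{WeakPreserved}, \ref{MuPreservation}, and~\ref{Formulae}, the point being that the counting functions $\mu_{g,n}$ are monotone enough to read off $|A|$. First I would reduce to the case $k \geq \ell$. The graph isomorphism $B^{k,\ell} \cong B^{\ell,k}$ interchanging $v_1$ and $v_2$ identifies $\B^{k,\ell}_A$ with $\B^{\ell,k}_{A^c}$, and $|A| = |\Phi(A)|$ holds if and only if $|A^c| = |\Phi(A)^c|$; so it suffices to handle admissible triples with $k \geq \ell$, in which case $k = 0$ forces $\ell = 0$ and the three cases of Proposition~\ref{Formulae} are exhaustive.

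So fix such a triple $(k,\ell,A)$ and an edge-labelling $\pi$. By the discussion following~(\ref{Phi(A)}), Proposition~\ref{WeakPreserved} produces a $(k,\ell)$-stable $B \subseteq I_n$ with $\Phi[\B^{k,\ell}_A,\pi] = [\B^{k,\ell}_B,\pi]$, and Proposition~\ref{MuPreservation} then gives $\mu_{g,n}(k,\ell,A) = \mu_{g,n}(k,\ell,B)$. I would now plug this equality into the formulas of Proposition~\ref{Formulae}. When $k \geq 1$ the formula ($2^{|A|}$ for $k \geq 2$, and $2^{|A|} - 1$ for $k = 1$) is strictly increasing in $|A|$, so the equality forces $|B| = |A|$; setting $\Phi(A) = B$ finishes these cases, and $B$ is the unique valid choice because for $(k,\ell) \neq (0,0)$ the simplex $[\B^{k,\ell}_B,\pi]$ determines $B$. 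When $(k,\ell) = (0,0)$ the formula depends on $|A|$ only through $2^{|A|} + 2^{n - |A|}$; since $t \mapsto 2^t + 2^{n-t}$ is strictly convex and symmetric about $t = n/2$ on $\{0,\dots,n\}$, it is injective on $\{0,\dots,\lfloor n/2\rfloor\}$, so the equality forces $\{|B|, n - |B|\} = \{|A|, n - |A|\}$. Using $(\B^{0,0}_B,\pi) \cong (\B^{0,0}_{B^c},\pi)$, I replace $B$ by $B^c$ if necessary so that the chosen $\Phi(A)$ has cardinality $|A|$; this is the unique choice when $|A| \neq n/2$, whereas both $B$ and $B^c$ work when $|A| = n/2$, accounting for the last sentence of the statement.

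I expect essentially no serious obstacle here, since the substance is carried by the three cited propositions; the only things needing care are the verification that $t \mapsto 2^t + 2^{n-t}$ is strictly convex (so that $\mu_{g,n}$ genuinely pins down $\{|A|, n-|A|\}$ in the $(0,0)$ case) and the bookkeeping of the complement ambiguity, together with making sure the reduction to $k \geq \ell$ is compatible with the choice of edge-labelling $\pi$, which follows from $\sS$-equivariance.
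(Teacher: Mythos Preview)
Your proposal is correct and follows essentially the same route as the paper: reduce to $k \geq \ell$, invoke Proposition~\ref{MuPreservation} to equate the $\mu_{g,n}$-values, and then read off $|A|$ (or $\{|A|,\,n-|A|\}$ in the $(0,0)$ case) from the explicit formulas of Proposition~\ref{Formulae}, handling the complement ambiguity exactly as you describe. The only differences are cosmetic---you spell out the convexity of $t \mapsto 2^t + 2^{n-t}$ and the role of $\sS$-equivariance in the reduction step a bit more explicitly than the paper does.
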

\begin{proof}
	Replacing $A$ by $A^c$ if necessary, we may suppose that $k \geq \ell$ (recall that $(k, \ell, A)$ is admissible if and only if $(\ell, k, A^c)$ is admissible). Let $\Phi(A)$ be any choice of subset such that 
	\[\Phi[\B^{k, \ell}_A, \pi] = [\B^{k, \ell}_{\Phi(A)}, \pi]. \]
	Then Proposition \ref{MuPreservation} gives that
	\[\mu_{g, n}(k, \ell, A) = \mu_{g, n}(k, \ell, \Phi(A)). \]
	If $k \geq 1$, it follows from Proposition \ref{Formulae} that
	\[2^{|A|} = 2^{|\Phi(A)|}, \]
	which implies that $|A| = |\Phi(A)|$ and proves the existence part of the claim. If $(k, \ell) = (0, 0)$, we get that
	\[2^{|A|} + 2^{n - |A|} = 2^{|\Phi(A)|} + 2^{n - |\Phi(A)|}, \]
	so either $|A| = |\Phi(A)|$ or $|A^c| = |\Phi(A)|$. Since $[\B_{\Phi(A)}^{0,0}, \pi]= [\B_{\Phi(A)^c}^{0,0}, \pi]$, the existence is also proven in this case.
	
	For uniqueness, we have already discussed why $\Phi(A)$ is unique when $(k, \ell) \neq (0, 0)$. If $(k,\ell) = (0,0)$, then the duplication $[\B_{\Phi(A)}^{0,0}, \pi]= [\B_{\Phi(A)^c}^{0,0}, \pi]$ is eliminated upon requiring that $|\Phi(A)| = |A|$, unless it so happens that $|A| = n/2$.
\end{proof}	
As an immediate corollary, we have the $n = 1$ case of Theorem \ref{Main}.
\begin{cor}\label{UniqueMarking}
Suppose $g \geq 2$, and let $\Phi \in \Aut(\Delta_{g, 1})$. Then
\[\Phi|_{\V^{2}_{g,1}} = \mathrm{Id}|_{\V^{2}_{g, 1}}. \]
In particular, \[\Aut(\Delta_{g,1}) = \{\mathrm{Id} \} \cong S_1. \]
\end{cor}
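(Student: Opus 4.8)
The plan is to derive Corollary \ref{UniqueMarking} from Corollary \ref{SigmaExistence} together with the purity of $\V^2_{g,1}$ (Proposition \ref{PureDimensional}) and the injectivity of $\rho^2_{g,1}$ (Theorem \ref{TwoVertex}). First I would record the elementary observation that when $n = 1$ the set $I_1 = \{1\}$ has exactly two subsets, $\varnothing$ and $\{1\}$, and these are distinguished by their cardinalities $0$ and $1$. Since $g \geq 2$ gives $2g - 2 + 1 = 2g - 1 \geq 3$, Corollary \ref{SigmaExistence} applies: for any $(g,1)$-admissible triple $(k, \ell, A)$ and any edge-labelling $\pi$ of $B^{k, \ell}$, it produces $\Phi(A) \subseteq I_1$ with $|\Phi(A)| = |A|$ and $\Phi[\B^{k, \ell}_A, \pi] = [\B^{k, \ell}_{\Phi(A)}, \pi]$. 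As there is a unique subset of $I_1$ of each cardinality, the condition $|\Phi(A)| = |A|$ forces $\Phi(A) = A$, so $\Phi[\B^{k, \ell}_A, \pi] = [\B^{k, \ell}_A, \pi]$ (in the exceptional case $(k, \ell) = (0, 0)$ the two candidate subsets $A$ and $A^c$ already give the same simplex, so this causes no trouble; note also the uniqueness caveat ``$|A| = n/2$'' of Corollary \ref{SigmaExistence} is vacuous here since $1/2 \notin \Z$).

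Next I would invoke the classification of top-dimensional simplices of $\V^2_{g, 1}$. Any such simplex lies in $\V^2_{g, 1}[g]$, so (as in the proof of Proposition \ref{PureDimensional}) its underlying graph $\G$ has $|V(\G)| = 2$ and all vertex weights zero, hence $b^1(\G) = g$; therefore $(\G, \tau)$ is isomorphic to $(\B^{k, \ell}_A, \pi)$ for some admissible triple $(k, \ell, A)$ and edge-labelling $\pi$. The previous paragraph then gives $\Phi[\G, \tau] = [\G, \tau]$. Thus $\Phi$ fixes every element of $\V^2_{g, 1}[g]$, and since $\V^2_{g, 1}$ is pure by Proposition \ref{PureDimensional}, every simplex of $\V^2_{g, 1}$ is a face of a top-dimensional one, so $\Phi$ commuting with face maps yields $\Phi|_{\V^2_{g, 1}} = \mathrm{Id}|_{\V^2_{g, 1}}$.

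Finally, for the assertion $\Aut(\Delta_{g, 1}) = \{\mathrm{Id}\}$, I would apply Theorem \ref{TwoVertex}: the restriction map $\rho^2_{g, 1}$ is injective for $g \geq 2$, and since $\rho^2_{g,1}(\Phi) = \Phi|_{\V^2_{g,1}} = \mathrm{Id}|_{\V^2_{g,1}}$ for every $\Phi$, injectivity forces $\Phi = \mathrm{Id}$. The isomorphism $\{\mathrm{Id}\} \cong S_1$ is immediate, as $S_1 = \mathrm{Perm}(\{1\})$ is the trivial group. I do not anticipate any real obstacle: all the substance is already packaged in Corollary \ref{SigmaExistence} and Theorem \ref{TwoVertex}, and the only point requiring a moment of care is checking that the non-uniqueness phenomena in Corollary \ref{SigmaExistence} are irrelevant when $n = 1$, which is exactly the content of the first paragraph above.
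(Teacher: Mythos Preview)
Your proposal is correct and takes essentially the same approach as the paper, which simply records Corollary \ref{UniqueMarking} as an immediate consequence of Corollary \ref{SigmaExistence} without spelling out details. You have supplied exactly the intended argument: since $n=1$, the cardinality constraint $|\Phi(A)|=|A|$ already forces $\Phi(A)=A$, so $\Phi$ fixes all top simplices of $\V^2_{g,1}$, and then purity plus Theorem \ref{TwoVertex} finishes.
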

\subsection{Finishing the proof when $g \geq 1$ and $n \geq 2$}\label{PositiveGenus}
All cases where $2g - 2 + n \geq 3$ and $n \leq 1$ of Theorem \ref{Image} have been proven. When $g \geq 1$ and $n \geq 3$, we have $n$ distinct stable graphs $\B^{g, 0}_{x}$ for $x \in I_n$ (we use $x$ instead of $\{x\}$ for ease of notation). Fixing some edge-labelling $\pi:E(B^{g, 0}) \to [g]$, we extract a unique permutation $\sigma \in S_n$ from $\Phi \in \Aut(\Delta_{g,n})$ by looking at the action of $\Phi$ on the $n$ simplices $[\B^{g, 0}_{x}, \pi]$; these form a closed orbit of $\Aut(\Delta_{g,n})$ by Corollary \ref{SigmaExistence}. That is, when $g \geq 1$ and $n \geq 3$, there is a unique $\sigma \in S_n$ such that
\[\Phi[\B^{g, 0}_{x}, \pi] = f_{\sigma}[\B^{g, 0}_{x}, \pi]\] for all $x \in I_n$. In this situation, we can finish the proof of Theorem \ref{Image} by proving that the composition $f_{\sigma^{-1}} \circ \Phi$ acts as the identity on $\V^{2}_{g, n}[g]$. When $n = 2$ and $g \geq 2$ (recall that the case $(g, n) = (1, 2)$ was treated in Example \ref{SmallExample}), we adjust this idea by replacing $\B^{g, 0}_{x}$ with $\B^{g - 1, 1}_{x}$. The upshot of this discussion is that the remaining cases of Theorem \ref{TwoVertex} (and hence of our main result Theorem \ref{Main}) when $g \geq 1$ are equivalent to the following proposition.
\begin{prop}\label{FinalReformulation}
Suppose $g \geq 1$, $n \geq 2$, and fix $\Phi \in \Aut(\Delta_{g,n})$.
\begin{enumerate}[(a)]
	\item if $n \geq 3$, $\pi : E(B^{g, 0}) \to [g]$ is any edge-labelling, and $\Phi$ fixes the $n$ simplices $[\B^{g, 0}_x, \pi]$ for $x \in I_n$, then \[\Phi|_{\V^{2}_{g, n}} = \mathrm{Id}|_{\V^{2}_{g, n}}\] 
	\item if $g \geq 2$, $n = 2$, and $\pi : E(B^{g - 1, 1}) \to [g]$ is any edge-labelling, and $\Phi$ fixes $[\B^{g-1, 1}_{x}, \pi]$ for $x = 1, 2$, then
	\[\Phi|_{\V^{2}_{g, 2}} = \mathrm{Id}|_{\V^{2}_{g, 2}}. \]
\end{enumerate}	
\end{prop}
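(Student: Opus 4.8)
The plan is to build on Corollary \ref{SigmaExistence}, which already supplies, for every $(g,n)$-admissible triple $(k,\ell,A)$, a subset $\Phi(A)\subseteq I_n$ with $|\Phi(A)|=|A|$ and $\Phi[\B^{k,\ell}_A,\pi]=[\B^{k,\ell}_{\Phi(A)},\pi]$. Since $\V^2_{g,n}$ is pure of dimension $g$ (Proposition \ref{PureDimensional}), $\Phi|_{\V^2_{g,n}}$ is determined by its action on the simplices $[\B^{k,\ell}_A,\pi]$, and because $[\B^{0,0}_A,\pi]=[\B^{0,0}_{A^c},\pi]$, proving both parts of the proposition reduces to showing $\Phi(A)=A$ for every admissible triple with $(k,\ell)\neq(0,0)$, and $\Phi(A)\in\{A,A^c\}$ when $(k,\ell)=(0,0)$. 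The hypothesis of part (a), resp.\ (b), furnishes exactly this for the triples $(g,0,\{x\})$, resp.\ $(g-1,1,\{x\})$, $x\in I_n$; the task is to propagate it to all admissible triples.

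The propagation is carried out by a coface argument inside $\V^3_{g,n}$. Given an admissible triple $(k,\ell,A)$ one exhibits an explicit $(\G,\tau)$ in $\Gamma_{g,n}^{\EL}$ with $b^1(\G)=g$ and $|V(\G)|=3$ such that two distinct contractions $d_e[\G,\tau]$ and $d_{e'}[\G,\tau]$ equal $[\B^{k,\ell}_A,\pi]$ and a simplex already known to be $\Phi$-fixed. Applying $\Phi$, one uses that $\Phi$ commutes with the face maps (\ref{SimplicialStructure}), preserves cycles and bridges (Corollaries \ref{CyclesPreserved}, \ref{BridgesPreserved}), and preserves $\sS$-stabilizers (\ref{Stabilizers}); the last point forces the block of $g$ loops sitting on one vertex of $\G$ to again sit on a single vertex of $\Phi\G$, so that only finitely many shapes and marking distributions remain for $\Phi\G$. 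All but the intended possibility are eliminated using the cardinality constraint $|\Phi(A)|=|A|$ together with the stability inequalities, and inspecting the two faces of the surviving $\Phi\G$ then pins down $\Phi(A)$.

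Concretely, I would first treat the triples $(g,0,A)$. Assume $2\le |A|\le n-2$ (the cases $|A|\le 1$ are immediate from the hypothesis or from $|\Phi(A)|=|A|$, and $|A|\ge n-1$ is non-admissible) and pick $x\in A$; let $\G$ have vertices $a,b,c$, $g$ loops at $a$, a pendant path on $a,b,c$ with edges $ab$ and $bc$, and markings $x$ at $a$, $A\smallsetminus\{x\}$ at $b$, and $A^c$ at $c$. Then $d_{ab}[\G,\tau]=[\B^{g,0}_A,\pi_1]$ while $d_{bc}[\G,\tau]=[\B^{g,0}_{\{x\}},\pi_2]$, the latter $\Phi$-fixed by hypothesis, and the analysis above forces $x\in\Phi(A)$; letting $x$ range over $A$ gives $A\subseteq\Phi(A)$, hence $\Phi(A)=A$. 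One then reaches the remaining admissible triples — and removes the ancillary restriction $|A|\le n-2$ — by an induction that peels loops off one at a time: an auxiliary $3$-vertex graph carrying a $2$-cycle has a contraction that re-creates a loop, which relates $[\B^{k,\ell}_A,\pi]$ to a simplex of the form $[\B^{k+1,\ell}_{A'},\pi']$ (and, at the bottom, relates the loopless family $\B^{0,0}_A$ to some $\B^{1,0}_{A'}$), so the value of $\Phi$ is handed down from cases already settled. Part (b) is identical in structure with $\B^{g-1,1}_x$ playing the role of $\B^{g,0}_x$ — this replacement is forced because $\B^{g,0}_x$ is unstable when $n=2$ — and with $2$-cycles entering the auxiliary graphs one step earlier.

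The main obstacle is the bookkeeping concealed in ``the analysis above'': one must produce auxiliary $3$-vertex graphs that are genuinely stable (the constraints are tightest when $g$ and $|A^c|$ are small), check that cycle/bridge/stabilizer preservation really does trim the possibilities for $\Phi\G$ down to a list on which the cardinality and stability constraints are decisive, and organize the induction so that every admissible triple is reached, including the delicate loopless family $(k,\ell)=(0,0)$, where one only needs $\Phi(A)\in\{A,A^c\}$ but must still extract even that. Each individual step is routine, but keeping the small-parameter cases mutually consistent is where the effort lies.
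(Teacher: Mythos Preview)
Your opening reductions are correct and your treatment of the $(g,0,A)$ family is essentially the paper's Lemma~\ref{Relationg0}: the three-vertex path with $g$ loops at one end has $[\B^{g,0}_A,\pi_1]$ and $[\B^{g,0}_{\{x\}},\pi_2]$ as faces, and tracking $\Phi$ through both yields $x\in\Phi(A)$, hence $\Phi(A)=A$. For this particular graph the stabilizer and face-map constraints really do pin down $\Phi\G$ up to markings, even though the graph has $(g,0)$-bridges.

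Where your sketch diverges from the paper is the passage to general $(k,\ell)$. Your proposed induction ``peel one loop at a time, relating $\B^{k,\ell}_A$ to $\B^{k+1,\ell}_{A'}$ via a $2$-cycle'' does not work as literally stated: if $\H$ has three vertices, a $2$-cycle between $u_1,u_2$, $k$ loops at $u_1$, $\ell$ loops at $u_3$, and the remaining $g-k-\ell$ nonloop edges placed between $u_2$ and $u_3$, then contracting a $2$-cycle edge gives $\B^{k+1,\ell}$, but contracting a $u_2u_3$ edge gives $\B^{k,\,g-k-1}$, not $\B^{k,\ell}$, unless $k+\ell=g-1$. One can reorganize the induction (different placements of the extra edges, allowing $3$-cycles, etc.), but the bookkeeping you flag as ``the main obstacle'' is not just bookkeeping: at each step you must re-establish a three-vertex weak-isomorphism statement analogous to Proposition~\ref{WeakPreserved}, and these get genuinely delicate for small $g$ and when the auxiliary graph acquires cycles.

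The paper avoids this induction entirely. After the $(g,0)$ case it invokes the relation $\leftrightarrow$ directly: Lemma~\ref{Relation} shows $\B^{g,0}_A\leftrightarrow\B^{k,\ell}_B$ (for $(k,\ell)\neq(g,0)$) if and only if $A^c\subseteq B$ or $A^c\subseteq B^c$. Letting $A$ range over size-$(n-2)$ subsets immediately gives $\binom{B|B^c}{2}=\binom{\Phi(B)|\Phi(B)^c}{2}$, hence $\Phi(B)\in\{B,B^c\}$ for \emph{every} admissible $(k,\ell,B)$ at once. Combined with $|\Phi(B)|=|B|$ this already finishes all cases except $(k,\ell)\neq(0,0)$ with $n$ even and $|B|=n/2$. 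That single residual case is where the real work lies, and the paper isolates it: Lemma~\ref{Snails} proves weak isomorphism for three-vertex graphs with no $3$-cycles and no $(g,0)$-bridges, and the Snail graph $\mathbf S$ of Figure~\ref{SnailGraph} (which meets those hypotheses precisely because $(k,\ell)\neq(0,0),(g,0)$) then rules out $\Phi(B)=B^c$. Your sketch does not indicate how, or whether, your inductive route handles this $|B|=n/2$ ambiguity; the paper's organization makes clear that this is the only hard point and supplies the tool (Lemma~\ref{Snails}) exactly calibrated to it. Part~(b) follows the same template, with $\B^{g-1,1}_x$ replacing $\B^{g,0}_x$ and a small variant of the Snail graph.
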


The case $g = 0$ requires a slightly different strategy, as the only $(0, n)$-admissible triples $(k, \ell, A)$ are those where $k = \ell = 0$ and $|A|, |A^c| \geq 2$. We will first prove Proposition \ref{FinalReformulation} and then conclude the paper with the $g = 0$ case. To prove Proposition \ref{FinalReformulation}, we need some preliminary combinatorial observations. Given $A \subseteq I_n$, we put
\[\binom{A}{2} = \{S \subseteq I_n \mid |S| = 2 \text{ and }S\subseteq A\} \]
as in Definition \ref{AuxiliaryInvariants}, and define
\[\binom{A|A^c}{2} \defeq \binom{A}{2} \cup \binom{A^c}{2}. \]
Observe that when $|A| \geq 2$, the set $\binom{A}{2}$ completely determines $A$, while $\binom{A|A^c}{2}$ determines $A$ up to complementation: if
\[\binom{A|A^c}{2} = \binom{B|B^c}{2}, \]
then either $A = B$ or $A = B^c$.

Introduce a relation $\leftrightarrow$ on graphs in $\Gamma_{g, n}$ by declaring $\G \leftrightarrow \H$ if and only if there exists a graph $\G'$ in $\Gamma_{g, n}$ such that there are morphisms $\G' \to \G$ and $\G' \to \H$. We upgrade this to a relation on simplices of $\Delta_{g, n}$ by declaring $[\G, \tau] \leftrightarrow [\H, \pi]$ if and only if $\G \leftrightarrow \H$. By $\sS$-equivariance, it is clear that given $\Phi \in \Aut(\Delta_{g, n})$, we have $[\G, \tau] \leftrightarrow [\H, \pi]$ if and only if $\Phi[\G, \tau] \leftrightarrow \Phi[\H, \pi]$.

Using the definition of the geometric realization functor (\ref{Realization}) and the correspondence between $\sS_{p + 1}$-orbits in $\Delta_{g, n}[p]$ and isomorphism classes of graphs in $\Gamma_{g, n}$ with $p + 1$ edges, we can view $\leftrightarrow$ as a relation on cells of the geometric realization of $\Delta_{g, n}$: two distinct cells of the same dimension are related if and only if they are common faces of a larger one.

Given $\G$ in $\Gamma_{g, n}$, we say $e \in E(\G)$ is a \textsl{\textbf{$(g, 0)$-bridge}}, if upon contracting all edges of $E(\G)$ besides for $e$ and forgetting all markings, the resulting graph has one edge and two vertices, where one vertex is of weight $g$ and the other is of weight $0$.

\begin{lem}\label{Relationg0}
	Fix $g \geq 1$. Suppose $A \subseteq I_n$ is $(g, 0)$-admissible with $|A| \geq 2$. Then we have $\B^{g, 0}_x \leftrightarrow \B^{g, 0}_A$ if and only if $x \in A$. Thus if $\Phi \in \Aut(\Delta_{g,n})$ fixes $[\B^{g, 0}_x, \pi]$ for all $x \in I_n$, then $\Phi$ also fixes $[\B^{g, 0}_A, \pi]$ for any $(g, 0)$-admissible $A \subseteq I_n$.
\end{lem}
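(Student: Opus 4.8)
The plan is to establish first the equivalence ``$\B^{g,0}_x \leftrightarrow \B^{g,0}_A$ iff $x\in A$'' (for the admissible $A$ with $|A|\geq 2$), and then to read off the statement about $\Phi$ by a short formal argument. Recall that $B^{g,0}$ consists of two vertices $v_1,v_2$ joined by a single edge with all $g$ loops based at $v_1$, so that in $\B^{g,0}_x$ the marking $x$ sits on $v_1$ alongside the loops and every other marking sits on $v_2$, while in $\B^{g,0}_A$ the markings of $A$ sit on $v_1$ and those of $A^c$ on $v_2$.

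For the implication $x\in A\Rightarrow \B^{g,0}_x\leftrightarrow\B^{g,0}_A$, I would exhibit an explicit common uncontraction. Take $\G'$ to be the graph on three vertices $u_1,u_2,u_3$ with two non-loop edges $u_1u_2$ and $u_2u_3$, together with $g$ loops at $u_1$, with the marking $x$ at $u_1$, the markings of $A\smallsetminus\{x\}$ at $u_2$, and the markings of $A^c$ at $u_3$. A routine stability check shows $\G'$ lies in $\Gamma_{g,n}$ and has $b^1(\G')=g$: the hypothesis $|A|\geq 2$ is exactly what is needed at $u_2$, and the admissibility of $A$ (i.e.\ $|A^c|\geq 2$) is exactly what is needed at $u_3$. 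Contracting the edge $u_2u_3$ produces $\B^{g,0}_x$ and contracting $u_1u_2$ produces $\B^{g,0}_A$, so both are targets of morphisms out of $\G'$, whence $\B^{g,0}_x\leftrightarrow\B^{g,0}_A$.

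For the converse, suppose $x\notin A$ and, toward a contradiction, that some $\Gamma_{g,n}$-object $\G'$ admits morphisms to both $\B^{g,0}_x$ and $\B^{g,0}_A$. Since the total genus $b^1+\sum_v w(v)$ is preserved by contractions while $\sum_v w(v)$ is non-decreasing and vanishes on both targets, $\G'$ has $b^1(\G')=g$, all vertex weights zero, and both morphisms contract only non-loop edges. I would then analyze the preimage $\tilde e$ of the unique bridge of $\B^{g,0}_x$: the two vertex-fibres over $v_1$ and $v_2$ partition $V(\G')$, each is connected (vertex-fibres of contraction maps are connected), and $\tilde e$ is the only edge joining them, so $\tilde e$ is a bridge of $\G'$, one side $\G'_1$ has $b^1(\G'_1)=g$ and supports exactly the marking $x$, and the other side $\G'_2$ is a tree supporting exactly the markings of $I_n\smallsetminus\{x\}$. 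The same analysis for $\B^{g,0}_A$ produces a bridge $\tilde e'$ of $\G'$ with a genus side $\G'_3$ ($b^1=g$, markings $A$) and a tree side $\G'_4$ (markings $A^c$); since $|A|\geq 2$ and $|A^c|\geq 2$, the bridges $\tilde e$ and $\tilde e'$ are distinct. Now every cycle-vertex lies in $\G'_1\cap\G'_3$, while the vertex carrying the marking $x$ lies in $\G'_1\cap\G'_4$ (because $x\notin A$); chasing a path inside the connected subgraph $\G'_1$ between such vertices forces $\tilde e'$ to be an edge of $\G'_1$. Splitting $\G'_1$ along $\tilde e'$ yields a piece containing all its cycles (necessarily contained in $\G'_3$) and a complementary tree piece $T$ (necessarily contained in $\G'_4$, containing the vertex carrying $x$, and attached to the rest of $\G'$ only through the edge $\tilde e'$). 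Applying the stability condition to the vertices of $T$ — none of which support a marking other than $x$ — forces $T$ to consist of at most one edge; the endpoint of $\tilde e'$ lying in $T$ then has valence at most $2$, weight $0$, and at most one marking, so it violates stability. This contradiction shows no such $\G'$ exists.

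Finally I would deduce the last assertion. Given $\Phi\in\Aut(\Delta_{g,n})$ fixing $[\B^{g,0}_x,\pi]$ for all $x\in I_n$ and a $(g,0)$-admissible $A$, Corollary~\ref{SigmaExistence} provides a subset $\Phi(A)\subseteq I_n$ with $|\Phi(A)|=|A|$ and $\Phi[\B^{g,0}_A,\pi]=[\B^{g,0}_{\Phi(A)},\pi]$, unique since $(g,0)\neq(0,0)$. If $|A|\leq 1$ this already finishes it ($\Phi(\varnothing)=\varnothing$; and $\B^{g,0}_{\{x\}}=\B^{g,0}_x$ is fixed by hypothesis). If $|A|\geq 2$, then for each $x\in A$ we have $\B^{g,0}_x\leftrightarrow\B^{g,0}_A$ by the first part; since $\Phi$ preserves the relation $\leftrightarrow$ and fixes $[\B^{g,0}_x,\pi]$, we get $\B^{g,0}_x\leftrightarrow\B^{g,0}_{\Phi(A)}$, and as $\Phi(A)$ is $(g,0)$-admissible with at least two elements the first part forces $x\in\Phi(A)$. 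Hence $A\subseteq\Phi(A)$, and equality of cardinalities gives $A=\Phi(A)$, so $\Phi$ fixes $[\B^{g,0}_A,\pi]$.

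The hard part will be the third paragraph: one must verify carefully that the preimage of the bridge of $\B^{g,0}_x$ (and of $\B^{g,0}_A$) is again a bridge whose removal separates all of the genus from a tree, with the correct markings on each side, and then extract a contradiction from the stability condition by pinning down an unstable low-valence vertex on the tree piece. The other two steps are, respectively, an explicit construction and a purely formal deduction from results already established.
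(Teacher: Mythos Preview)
Your argument is correct and is in fact more careful than the paper's own proof. The paper disposes of the ``only if'' direction by asserting that any \emph{three-vertex} graph contracting to both $\B^{g,0}_x$ and $\B^{g,0}_A$ must be a path of two $(g,0)$-bridges with all $g$ loops at one end, and then reads off $A_1=\{x\}$ and $A_1\cup A_2=A$; it does not explain why witnesses to $\leftrightarrow$ may be taken to have three vertices. You sidestep this by working with an arbitrary common uncontraction $\G'$, extracting the two bridges $\tilde e,\tilde e'$ and the tree piece $T=\G'_1\cap\G'_4$, and deriving a stability contradiction directly. Your deduction of the ``Thus'' statement from Corollary~\ref{SigmaExistence} and preservation of $\leftrightarrow$ is also cleaner than what the paper writes.

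Two small points to tighten in your third paragraph. First, to conclude that $T$ is attached to the rest of $\G'$ only through $\tilde e'$ you also need that $\tilde e$ is not incident to $T$; this follows by the symmetric argument (using $|A|\geq 2$ to find some $y\in A\smallsetminus\{x\}$ lying in $V_2\cap V_3$) showing that $\tilde e$ lies inside $\G'_3$. Second, your final sentence pins the instability on the endpoint of $\tilde e'$ in $T$, but if $T$ has one edge and $x$ sits at that endpoint then that vertex is in fact stable; the contradiction comes from the \emph{other} leaf. Cleaner phrasing: any leaf of $T$ other than the attachment point has valence $1$ in $\G'$ and at most one marking, hence is unstable, so $T$ must be a single vertex, and that vertex then has valence $1$ and one marking, which is again unstable.
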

\begin{proof}
	It is straightforward to argue that any graph with three vertices which contracts to both $\B^{g, 0}_{x}$ and $\B^{g, 0}_A$ must have two distinct $(g, 0)$-bridges, and $g$ loops supported on one of the end vertices. If $A_1$ denotes the set of markings supported on the vertex with loops and $A_2$ denotes the set of markings supported on the middle vertex, we see that we must have $A_1 \cup A_2 = A$ and $A_1 = \{x\}$, from which the lemma follows.
\end{proof}
\begin{lem}\label{Relation}
	Fix $g \geq 1$, Suppose $A$ is $(g, 0)$-admissible and $B$ is $(k, \ell)$-admissible for some $k, \ell \geq 0$ with $k + \ell \leq g$ and $(k, \ell) \neq (g, 0)$. Then $\B^{g, 0}_A \leftrightarrow \B^{k, \ell}_B$ if and only $|A^c| \geq 2$ and $A^c \subseteq B$ or $A^c \subseteq B^c$.
\end{lem}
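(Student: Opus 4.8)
The plan is to study an arbitrary \emph{common uncontraction}: a $\Gamma_{g, n}$-object $\G'$ admitting morphisms $\phi_1 \colon \G' \to \B^{g, 0}_A$ and $\phi_2 \colon \G' \to \B^{k, \ell}_B$. Write $v_1, v_2$ for the vertices of $\B^{g, 0}_A$, with $v_1$ carrying the $g$ loops and the markings $A$, and $u_1, u_2$ for those of $\B^{k, \ell}_B$, with $u_1$ carrying the $k$ loops and $B$. Since an edge-contraction never raises the first Betti number and every object of $\Gamma_{g, n}$ satisfies $b^1 + \sum_{v} w(v) = g$, any such $\G'$ has $b^1(\G') = g$, all vertex weights zero, and $\phi_1, \phi_2$ are both composites of \emph{nonloop} contractions. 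Replacing $(k, \ell, B)$ by $(\ell, k, B^c)$ if needed --- which changes neither $\B^{k, \ell}_B$ up to isomorphism nor the asserted criterion, which is symmetric in $B \leftrightarrow B^c$ --- I may assume $k \ge \ell$, so that the hypothesis $(k, \ell) \ne (g, 0)$ amounts to $k < g$ and $\ell < g$; equivalently $\gamma \defeq g - k - \ell + 1 \ge 1$ and $(k, \ell) \notin \{(g, 0), (0, g)\}$. The condition $|A^c| \ge 2$ is in any case automatic from stability of the valence-one vertex $v_2$ of $\B^{g, 0}_A$.

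For the ``only if'' direction, I would exploit the two partitions of $\G'$ cut out by $\phi_1$ and $\phi_2$. Let $\tilde e$ be the unique edge of $\G'$ lying over the bridge of $\B^{g, 0}_A$; then $\tilde e$ is a bridge of $\G'$ and $\G' \smallsetminus \tilde e = T_1 \sqcup T_2$, where $T_i = \phi_1^{-1}(v_i)$ is connected, $T_2$ is a tree whose vertices carry exactly the markings $A^c$, and $T_1$ carries all of the genus and the markings $A$. Likewise set $U_i = \phi_2^{-1}(u_i)$: these are connected induced subgraphs with $b^1(U_1) = k$, $b^1(U_2) = \ell$, joined by exactly $\gamma$ edges, carrying markings $B$ and $B^c$ respectively. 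Now case on the location of $\tilde e$. If $\tilde e$ is one of the $\gamma$ edges between $U_1$ and $U_2$, then since $\G' \smallsetminus \tilde e$ is disconnected we must have $\gamma = 1$ and $\{U_1, U_2\} = \{T_1, T_2\}$, whence $\{k, \ell\} = \{b^1(T_1), b^1(T_2)\} = \{g, 0\}$ --- excluded. Otherwise $\tilde e$ lies inside a single $U_i$, so the connected subgraph $U_j$ ($j \ne i$) is disjoint from $\tilde e$ and therefore lies entirely in $T_1$ or entirely in $T_2$. If $U_j \subseteq T_2$, then $b^1(U_j) \le b^1(T_2) = 0$ while $T_1 \subseteq U_i$ forces $b^1(U_i) \ge b^1(T_1) = g$; combined with $b^1(U_1) + b^1(U_2) = k + \ell \le g$ this again gives $\{k, \ell\} = \{g, 0\}$, excluded. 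Hence $U_j \subseteq T_1$, so $V(T_2) \subseteq V(U_i)$, and reading off markings vertexwise yields $A^c \subseteq B$ when $i = 1$ and $A^c \subseteq B^c$ when $i = 2$.

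For the ``if'' direction, assume $|A^c| \ge 2$ and, after the symmetry above, $A^c \subseteq B$. I would build the witness $\G'$ by splitting the $k$-loop vertex $u_1$ of $\B^{k, \ell}_B$ in two: let $\G'$ have three vertices $w_1, w_2, w_3$, where $w_1$ supports the $k$ loops, is joined to $w_2$ by $\gamma$ edges, and carries the markings $B \smallsetminus A^c$; $w_2$ supports $\ell$ loops and carries $B^c$; and $w_3$ carries $A^c$ and is joined to $w_1$ by a single new edge $e$. Then $b^1(\G') = k + \ell + (\gamma - 1) = g$, and $\G'$ is stable: the leaf $w_3$ is stable exactly because $|A^c| \ge 2$, and $w_1$ is stable since $\gamma \ge 1$ --- the only way it could fail is $k = 0$, $\gamma = 1$, and no markings at $w_1$, which would force $(k, \ell) = (0, g)$, excluded. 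Contracting $e$ recovers $\B^{k, \ell}_B$ (here $A^c \subseteq B$ is used), while contracting one of the $\gamma$ edges between $w_1$ and $w_2$ merges them into a single vertex bearing all $g = k + \ell + (\gamma - 1)$ loops and the markings $(B \smallsetminus A^c) \cup B^c = (B \cap A) \cup B^c = A$ (again using $A^c \subseteq B$, i.e.\ $B^c \subseteq A$); the outcome is $\B^{g, 0}_A$. Thus $\G'$ witnesses $\B^{g, 0}_A \leftrightarrow \B^{k, \ell}_B$.

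The principal obstacle is the ``only if'' direction: one must rule out \emph{all} common uncontractions, including high-valence and ``partially split'' configurations, not merely three-vertex ones. The argument above pinpoints why this succeeds --- the nonloop side $T_2$ of $\phi_1$ is a tree carrying precisely the marking set $A^c$, so once a Betti-number count on $U_1, U_2$ (this is exactly where the hypothesis $(k, \ell) \ne (g, 0)$ is needed) forces $T_2$ into a single block of the $\phi_2$-partition, the containment $A^c \subseteq B$ or $A^c \subseteq B^c$ follows no matter how large $\G'$ is.
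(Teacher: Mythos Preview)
Your argument is correct and rests on the same core observation as the paper's: the preimage $\tilde e$ of the unique bridge of $\B^{g,0}_A$ is a bridge of any common uncontraction $\G'$, and the leaf side $T_2$ it cuts off is a genus-zero piece carrying exactly the markings $A^c$. The paper's proof is a three-sentence sketch which analyses only three-vertex common uncontractions, leaving the reduction from arbitrary $\G'$ implicit; your partition argument via $\{T_1,T_2\}$ and $\{U_1,U_2\}$ handles all $\G'$ at once, and your Betti-number count is exactly where the hypothesis $(k,\ell)\neq(g,0)$ enters --- this is the more complete version of what the paper gestures at.

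Two small presentational points. First, you invoke the symmetry $(k,\ell,B)\mapsto(\ell,k,B^c)$ twice --- once to arrange $k\ge\ell$, once to arrange $A^c\subseteq B$ --- and these two normalisations are not simultaneously available. This does no harm: your stability check for $w_1$ only needs $(k,\ell)\notin\{(g,0),(0,g)\}$, which holds regardless of which normalisation is in force. Second, your remark that $|A^c|\ge 2$ is automatic from stability of the valence-one vertex of $\B^{g,0}_A$ is correct; that clause in the lemma is redundant given the standing admissibility hypothesis.
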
	
\begin{proof}
	A graph with three vertices contracting to both $\B_A^{g, 0}$ and $\B_B^{k, \ell}$ must have a unique $(g,0)$-bridge which contains a leaf vertex; this leaf must support the elements of $A^c$. This observation suffices to prove the claim.
\end{proof}
Before the proof of \ref{FinalReformulation}, we need one final lemma to control the $\Aut(\Delta_{g,n})$-action on certain simplices of $\V^{3}_{g, n}[g + 1]$.
\begin{lem}\label{Snails}
	Suppose $g \geq 2$, and $[\G, \tau] \in \V^{3}_{g, n}[g + 1]$ is a simplex such that $\G$ has no $3$-cycles, and no $(g, 0)$-bridges. Then there exists a weak isomorphism of pairs
	\[\varphi: (\G, \tau) \dashrightarrow (\Phi \G, \Phi \tau) \]
\end{lem}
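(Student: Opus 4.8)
\textbf{Setup.} The plan is to mimic the proof of Proposition \ref{WeakPreserved}, but to feed the two–vertex case back in after contracting one edge. Since $\V^{3}_{g, n}$ is pure of dimension $g + 1$ (Proposition \ref{PureDimensional}), a simplex $[\G, \tau] \in \V^{3}_{g, n}[g + 1]$ is a facet, so $|V(\G)| = 3$, all vertex weights vanish, $b^1(\G) = g$ and $|E(\G)| = g + 2$. Because $\G$ is connected and has no $3$–cycle, the non-loop edges of $\G$, after identifying parallels, form a path $P_3$: there is a ``central'' vertex $v_1$ joined to $v_2$ by a parallel class $P_{12}$ of $a \geq 1$ edges and to $v_3$ by a parallel class $P_{13}$ of $b \geq 1$ edges, and loop sets $L_1, L_2, L_3$ of sizes $p, q, r$ at $v_1, v_2, v_3$, with $a + b + p + q + r = g + 2$. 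By Corollary \ref{CyclesPreserved} the bijection $\Phi_\G \colon E(\G) \to E(\Phi\G)$ carries loops to loops and preserves the partition of non-loop edges into parallel classes, so $\Phi\G$ has the same shape, with parameters $a', b', p', q', r'$ and sets $P_{12}', P_{13}', L_1', L_2', L_3'$; after possibly interchanging the names of the two leaves of $\Phi\G$ we may assume $\Phi_\G(P_{12}) = P_{12}'$ and $\Phi_\G(P_{13}) = P_{13}'$, so $a = a'$, $b = b'$, and $\Phi_\G(L_1 \cup L_2 \cup L_3) = L_1' \cup L_2' \cup L_3'$. It will then suffice to show $\Phi_\G(L_i) = L_i'$ for $i = 1, 2, 3$: granting this, the vertex map $v_i \mapsto v_i'$ together with $\Phi_\G$ on edges is a $\mathsf{Graph}_g$–isomorphism $\G \to \Phi\G$ which commutes with $\tau$ and $\Phi\tau$, hence a weak isomorphism $\varphi \colon (\G, \tau) \dashrightarrow (\Phi\G, \Phi\tau)$.

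\textbf{Feeding back the two–vertex case.} For a non-loop edge $e$, the pair $\G/e$ is a facet of $\V^{2}_{g, n}$ in $\V^{2}_{g, n}[g]$, and $\Phi(\G/e) = \Phi\G/\Phi_\G(e)$. Proposition \ref{WeakPreserved} produces a weak isomorphism $\psi \colon (\G/e, \tau_e) \dashrightarrow (\Phi\G/\Phi_\G(e), \Phi(\tau_e))$; comparing edge-labellings forces $\psi$ to induce the edge bijection $\Phi_{\G/e}$, and by the contraction square of Notation \ref{PhiG} the map $\Phi_{\G/e}$ agrees with $\Phi_\G$ on $E(\G) \smallsetminus \{e\}$. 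So $\Phi_\G$, restricted to $E(\G) \smallsetminus \{e\}$, respects the two–vertex structure of $\G/e$. Take $e \in P_{12}$, so $\G/e$ has vertices $u = \{v_1 v_2\}$ (loops $L_1 \cup L_2 \cup (P_{12}\smallsetminus\{e\})$) and $v_3$ (loops $L_3$), while $\Phi\G/\Phi_\G(e)$ has vertices $u' = \{v_1' v_2'\}$ and $v_3'$. The isomorphism $\psi$ matches $\{u, v_3\}$ with $\{u', v_3'\}$; if $a \geq 2$ the nonempty set $P_{12}\smallsetminus\{e\}$, which $\Phi_\G$ sends into $P_{12}'\smallsetminus\{\Phi_\G(e)\}$ — these being loops of $\Phi\G/\Phi_\G(e)$ based at $u'$ — forces the match $u \leftrightarrow u'$, whence $\Phi_\G(L_3) = L_3'$ and $\Phi_\G(L_1 \cup L_2) = L_1' \cup L_2'$. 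Symmetrically, if $b \geq 2$ then $\Phi_\G(L_2) = L_2'$ and $\Phi_\G(L_1 \cup L_3) = L_1' \cup L_3'$. When both $a, b \geq 2$, intersecting gives $\Phi_\G(L_i) = L_i'$ for all $i$. When exactly one leg is single, say $a = 1 < b$: the no–$(g,0)$–bridge hypothesis forces $1 \leq q \leq g - 1$ (if $q \in \{0, g\}$ the unique edge of $P_{12}$ is a $(g,0)$–bridge), so the alternative ``twisted'' match $u \leftrightarrow v_3'$ in the $\G/e$ analysis — which would give $\Phi_\G(L_1 \cup L_2) = L_3'$ — is impossible, since the $b$–leg already gives $\Phi_\G(L_2) = L_2' \subseteq L_3'$ with $L_2', L_3'$ on distinct vertices, so $L_2' = \emptyset$ and $q = 0$, a contradiction. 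Hence $\Phi_\G(L_3) = L_3'$ and $\Phi_\G(L_1 \cup L_2) = L_1' \cup L_2'$, and with $\Phi_\G(L_2) = L_2'$ we get $\Phi_\G(L_i) = L_i'$.

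\textbf{The remaining case and the main obstacle.} The only case left is $a = b = 1$, where $\G$ is a path of single edges with loop sets $L_1, L_2, L_3$, the no–$(g,0)$–bridge hypothesis now forcing $1 \leq q, r \leq g - 1$ (both leaves carry a loop) and $g \geq 2$. Here contracting either bridge merges the center with a leaf, and, there being no leftover parallel edges, neither $\G/e$ nor $\G/e'$ alone excludes the twisted matching. The plan is to argue as in the proof of Proposition \ref{WeakPreserved}: uncontract $\G$ at a loop of one leaf (available since $q, r \geq 1$) to a facet $\hat\G \in \V^{4}_{g, n}[g + 2]$, then use that $\Phi_{\hat\G}$ preserves loops and bridges (Corollaries \ref{CyclesPreserved}, \ref{BridgesPreserved}) together with the stability of $\Phi\hat\G$ to pin down the vertex of $\Phi\hat\G$ carrying the image of the new loop, and contract back to locate the corresponding loop of $\Phi\G$, giving $\Phi_\G(L_i) = L_i'$. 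I expect this $a = b = 1$ case to be the crux: it is the only place where the no–$(g,0)$–bridge hypothesis and the bound $g \geq 2$ are genuinely needed, and the bookkeeping for the uncontraction and its automorphism group requires care. Once $\Phi_\G(L_i) = L_i'$ holds in every case, the weak isomorphism $\varphi$ is assembled as in the setup step.
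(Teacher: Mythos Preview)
Your setup and the cases $a \geq 2$ or $b \geq 2$ are correct, and in fact cleaner than the paper's treatment of those configurations: using a leftover parallel edge as an anchor that becomes a loop at the merged vertex is a nice way to rigidify the weak isomorphism after contraction, and it avoids any appeal to marking counts. The $a = 1 < b$ argument is also fine once one notes that $\Phi_\G(L_2) = L_2' = \emptyset$ forces $L_2 = \emptyset$.

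The genuine gap is the $a = b = 1$ case, which you correctly identify as the crux but do not prove. Your uncontraction plan is too vague to carry weight: you have no weak-isomorphism result at the four-vertex level to fall back on, and it is not clear which stable $\hat\G$ you mean or how loop/bridge preservation on $\Phi\hat\G$ alone pins down the leaf carrying a given loop. The paper does \emph{not} uncontract here. Instead it stays at three vertices and brings in the one ingredient you never use: Corollary~\ref{SigmaExistence}, which says that the weak isomorphisms $(\G/e_i,\tau_i)\dashrightarrow(\Phi\G/\Phi e_i,(\Phi\tau)_i)$ and $(\G/e_j,\tau_j)\dashrightarrow(\Phi\G/\Phi e_j,(\Phi\tau)_j)$ can be chosen to preserve the number of markings on each vertex. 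Assuming a loop $\ell$ at a leaf $v_1$ lands on the wrong vertex of $\Phi\G$, these two marking-count equalities force the central vertex to carry no markings; tracking the loop $\ell$ through both contractions then forces both bridges to be simple (your $a=b=1$), forces the central vertex of $\Phi\G$ to carry no loops, and hence that vertex has weight $0$, valence $2$, and no markings, contradicting stability. The no-$(g,0)$-bridge hypothesis enters exactly to guarantee that the far leaf of $\Phi\G$ still carries a loop, so that the central vertex cannot absorb $\ell$.

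So: keep your argument for $\max(a,b)\geq 2$, but for $a=b=1$ replace the uncontraction sketch with the marking-count contradiction via Corollary~\ref{SigmaExistence} and stability.
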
	
\begin{proof}
	Suppose that $[\G, \tau]$ is as given in the proposition, and for all $i \in [g + 1]$ set \[\Phi e_i \defeq (\Phi \tau)^{-1}(i) \in E(\Phi \G).\] Then $\Phi \G$ also has no $3$-cycles. Moreover, $\Phi \G$ cannot have any $(g, 0)$-bridges: if for example edge $j$ were a $(g, 0)$-bridge in $\Phi \G$, then $\delta_i(j)$ would be a $(g, 0)$-bridge of $\Phi \G / \Phi e_i$ for some $i \neq j$ indexing a nonloop edge of both graphs. But $(\G/e_i, \tau_i)$ and $(\Phi \G / \Phi e_i, (\Phi\tau)_i)$ must be weakly isomorphic by Proposition \ref{WeakPreserved} and $\G/e_i$ cannot have a $(g, 0)$-bridge, since one cannot introduce such a bridge by contracting.
	
	To construct a weak isomorphism $(\G, \tau) \dashrightarrow (\Phi \G, \Phi \tau)$ choose a pair of nonloop indices $i, j$ of $\G$ such that $e_i$ and $e_j$ do not form a $2$-cycle; label the vertex which they share as $v_2$, the other endpoint of $e_i$ as $v_1$, and the other endpoint of $e_j$ as $v_3$. Then, since $\Phi e_i$ and $\Phi e_j$ cannot form a $2$-cycle in $\Phi \G$, we may deduce that $\G$ and $\Phi \G$ have full subgraphs as in Figure \ref{WeakIso3}.
	
	\begin{figure}[h]
		\centering
		\includegraphics[scale=1]{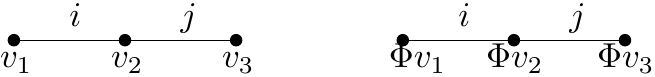}
		\caption{}
		\label{WeakIso3}
	\end{figure}	
	
Any index $\alpha \in [p]$ with $\alpha \neq i, j$ indexes an edge from $v_1$ to $v_2$, respectively $v_2$ to $v_3$, in $\G$ if and only if it indexes an edge from $\Phi v_1$ to $\Phi v_2$, resp. from $\Phi v_2$ to $\Phi v_3$, in $\Phi \G$, because $\Phi$ preserves $2$-cycles. To finish the proof of the lemma, it is enough to show that for each $s = 1, 2, 3$, an index $\ell \in [p]$ indexes a loop at vertex $v_s \in V(\G)$ if and only if it indexes a loop at vertex $\Phi v_s \in V(\Phi \G)$. By symmetry, it suffices to prove the claim for $s = 1$: the same proof will work for $s = 3$, and then it is forced to hold for $s = 2$.
	
	Indeed, suppose for sake of contradiction that we have some $\ell \in [p]$ which indexes a loop based at $v_1$ in $\G$, and also indexes a loop based at either $\Phi v_2$ or $\Phi v_3$ in $\Phi \G$. By Proposition \ref{Weak} and Corollary \ref{SigmaExistence}, there is a weak isomorphism $\varphi_j: (\G/e_j, \tau_j) \dashrightarrow (\Phi \G/\Phi e_j, (\Phi\tau)_j )$ which preserves the cardinality of the set of markings supported on each vertex. This isomorphism must take the image of the vertex $v_1$ in $\G/e$ to the image of edge $\Phi e_j$ in $\Phi \G/\Phi e_j$, because these are the vertices supporting the images of $\ell$ in $\G/e_j$ and $\Phi \G/ \Phi e_j$, respectively. Thus, if we set $X, Y, Z$ to be the number of markings supported on $v_1, v_2, v_3$, respectively, and put $\Phi X, \Phi Y, \Phi Z$ for the corresponding quantites for $\Phi \G$, we may deduce that $X = \Phi Y + \Phi Z$, so $\Phi Z \leq X$. Similarly, using that there is a weak isomorphism $\varphi_i : (\G/e_i, \tau_i) \dashrightarrow (\Phi \G/\Phi e_i, (\Phi \tau)_i)$ preserving the cardinalities of marking sets and the image of $\ell$, we also deduce that $X + Y = \Phi Z$, so $X \leq \Phi Z$. Thus $X = \Phi Z$ and $Y = \Phi Y = 0$.
	
	Next we observe that in both graphs, edge $j$ does not have any parallel edges, because in this circumstance, $\delta_j(\ell)$ would index a loop in $\Phi \G / \Phi e_j$ which was incident to strictly more loops than the one indexed by $\delta_j(\ell)$ in $\G/e_j$. As $j$ cannot index a $(g, 0)$-bridge in either graph, it must be that $\Phi v_3$ supports a positive number of loops. This implies that $\Phi v_2$ supports no loops, for the same reason why edge $j$ cannot have parallel edges in either graph. So, $\ell$ indexes a loop supported on $\Phi v_3$. Applying the same reasoning and reversing the roles of $\G$ and $\Phi \G$, we see also that edge $i$ cannot have any parallel edges in either graph.
	
	We have thus arrived at a contradiction: in $\Phi\G$, the vertex $\Phi v_2$ must be a vertex of weight $0$ and valence $2$ which supports no markings, which would imply that $\Phi \G$ is not an object of $\Gamma_{g, n}$.
\end{proof}

We are now ready to prove Proposition \ref{FinalReformulation}, and thus the $g \geq 1$ case of Theorem \ref{Main}.
\begin{proof}[Proof of Proposition \ref{FinalReformulation}] We prove each part separately.\\
\noindent\textbf{Part (a)} As in the statement, assume $g\geq 1$, $n \geq 3$, and that we are given $\Phi \in \Aut(\Delta_{g,n})$ such that
		\[\Phi[\B^{g, 0}_{x}, \pi] = [\B^{g, 0}_{x}, \pi] \]
		for all $x \in I_n$, where $\pi : E(B^{g, 0}) \to [g]$ is any edge-labelling. Then, by Lemma \ref{Relationg0}, we also have 
		\[\Phi[\B^{g, 0}_{A}, \pi] = [\B^{g, 0}_A, \pi]\] for all $(g, 0)$-stable subsets $A \subseteq I_n$. Pick an arbitrary $(g, n)$-admissible triple $(k, \ell, B)$ with $(k, \ell) \neq (g, 0)$, together with an edge-labelling $\theta: E(B^{k, \ell}) \to [g]$, to show that $\Phi$ fixes $[\B^{k, \ell}_{B}, \theta]$. Replacing $B$ by $B^c$ if necessary, we may assume $k \geq \ell$. Choose $\Phi(B)$ such that $|\Phi(B)| = |B|$ and such that 
		\[\Phi [\B^{k, \ell}_{B}, \theta] = [\B^{k, \ell}_{\Phi(B)}, \theta]; \]
		then $\Phi(B)$ is uniquely determined unless $(k, \ell) = (0,0)$, by Corollary \ref{SigmaExistence}. Now, using the statement of Lemma \ref{Relation} and letting $A$ vary over all subsets of $I_n$ of size $n - 2$ (note that when $|A| = n - 2$, $(g, 0, A)$ is always $(g, n)$-admissible), we see that 
		\[ \binom{B|B^c}{2} = \binom{\Phi(B)|\Phi(B)^c}{2}, \]
		so we either have $\Phi(B) = B$ or $\Phi(B) = B^c$. Since we must also have $|\Phi(B)| = |B|$, this implies that $\Phi[\B^{k,\ell}_B, \theta] = [\B^{k,\ell}_B, \theta]$ for any $(k,\ell)$-stable $B$, except for the cases where $(k, \ell) \neq (0, 0), (g, 0)$; $n \geq 4$ is even; and $|B| = n / 2$.
		We thus suppose for sake of contradiction that we are given such an admissible triple $(k, \ell, B)$ with $k \geq \ell$ satisfying
		\[\Phi[\B^{k, \ell}_{B}, \theta] = [\B^{k, \ell}_{B^c}, \theta]. \]
		Suppose $x \in B$, and consider the graph $\mathbf{S}$ in Figure \ref{SnailGraph}. Then $\mathbf{S}$ has genus $g$, contains no $3$-cycles, and since $(k, \ell) \neq (0,0), (g, 0)$, and $k \geq \ell$, the graph $\mathbf{S}$ contains no $(g, 0)$-bridges. Thus Lemma \ref{Snails} applies; given any edge-labelling $\tau: E(\mathbf{S}) \to [g + 1]$, there is a weak isomorphism \[\varphi: (\mathbf{S}, \tau) \dashrightarrow (\Phi\mathbf{S}, \Phi\tau).\]
		
		\begin{figure}[h]
			\centering
			\includegraphics[scale=1]{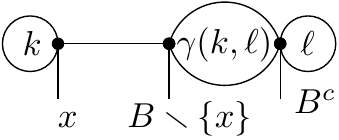}
			\caption{The graph $\mathbf{S}$ in the proof of Proposition \ref{FinalReformulation}. Here $\gamma(k, \ell) = g - k - \ell + 1$.}
			\label{SnailGraph}
		\end{figure}
	
		Let $A_1$ be the set of markings supported on the vertex of $\Phi\mathbf{S}$ which corresponds via the weak isomorphism $\varphi$ to the marking $x$ in $\mathbf{S}$, and similarly let $A_2$ correspond to $B \smallsetminus \{x\}$ and $A_3$ to $B^c$, as in Figure \ref{SnailGraphPrime}. The action of $\varphi$ on $V(\mathbf{S})$ is uniquely determined, so there is no ambiguity in this description.
		\begin{figure}[h]
			\centering
			\includegraphics[scale=1]{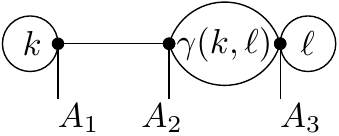}
			\caption{The graph $\Phi\mathbf{S}$ in the proof of Proposition \ref{FinalReformulation}.}
			\label{SnailGraphPrime}
		\end{figure}	
		
		Contracting an edge between $A_2$ and $A_3$ in $\Phi\mathbf{S}$, we see that $A_1 = \{x\}$ and $A_2 \cup A_3 = I_n \smallsetminus \{x\}$, since $\Phi$ fixes $[\B^{k, g - k}_{x}, \pi]$ for any edge-labelling $\pi$. But, if we are to have $\Phi[\B^{k, \ell}_B, \theta] = [\B^{k, \ell}_{B^c}, \theta]$, then we must have $A_1 \cup A_2 = B^c$. Thus $x \in B^c$, which contradicts $x \in B$. This concludes the proof of part (a).\\
		\noindent \textbf{Part (b)} Now suppose that $g \geq 2$, $n = 2$, and that $\Phi \in \Aut(\Delta_{g,2})$ satisfies 
		\[ \Phi[\B^{g-1, 1}_x, \pi] = [\B^{g-1, 1}_x, \pi] \]
		for any edge-labelling $\pi : E(B^{g-1, 1})\to [g]$ and $x = 1, 2$, to show that 
		\[\Phi[\B^{k, \ell}_x, \pi] = [\B^{k, \ell}_x, \pi] \]
		whenever $(k, \ell, \{x\})$ is a $(g, 2)$-admissible triple. Note that the above equality holds automatically when $(k, \ell) = (0, 0)$ by Corollary \ref{SigmaExistence}, so we need only prove it when $(k, \ell) \neq (0, 0), (g-1, 1)$, and $k \geq \ell$. Moreover, it suffices to treat the case $x = 1$. We argue in two cases: first, when $k + \ell = g$, then when $k + \ell < g$.
		
		When $k + \ell = g$, $(k, \ell) \neq (g-1, 1)$ and $k \geq \ell$, we must have $k, \ell \geq 2$, so in particular, the graph $\G$ of Figure \ref{FirstGraphFor2} is stable.
		\begin{figure}[h]
			\centering
			\includegraphics[scale=1]{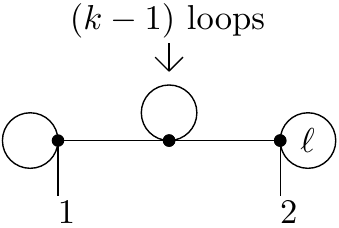}
			\caption{The graph $\mathbf{G}$ in the proof of Proposition \ref{FinalReformulation}.}
			\label{FirstGraphFor2}
		\end{figure}
		Clearly Lemma \ref{Snails} applies to $\G$, so for any edge-labelling $\tau: E(\G) \to [g + 1]$, there is a weak isomorphism $(\G, \tau) \dashrightarrow (\Phi\G, \Phi\tau)$. Since the simplex obtained from $[\G, \tau]$ by contracting the edge supporting the mark $2$ must be fixed, and since $\Phi$ preserves $S_2$-orbits, we may conclude that $[\Phi\G, \Phi\tau] = [\G, \tau]$, so upon contracting the nonloop edge of $\G$ containing the marking $1$, we get the desired result for $\B^{k, \ell}_{1}$.
		
		To treat the case $k + \ell < g$, we modify the graph $\mathbf{S}$ of Figure \ref{SnailGraph}, replacing $x$ by $1$, $B \smallsetminus \{x\}$ by $\varnothing$, and $B^c$ by $2$, and argue similarly, using the result for $k + \ell = g$.
\end{proof}	

\subsection{The case $g = 0$, $n \geq 5$.}\label{GenusZero} It is due to Abreu and Pacini ~\cite{AbreuPacini} that $\Aut(\Delta_{0,n}) \cong S_n$ when $n \geq 5$, while $\Aut(\Delta_{0,4}) \cong S_3$ (we saw this in Example \ref{SmallExample}). For completeness, we conclude by indicating how their result follows from the framework established herein.

When $g = 0$ and $n \geq 5$, the only $(0, n)$-admissible triples are $(0,0, A)$ for $A \subseteq I_n$ satisfying $2 \leq |A| \leq n - 2$. The graph $\B_A\defeq \B_A^{0,0}$ has one edge and two vertices, and therefore has a unique edge-labelling; put $[\B_A]$ for the corresponding $0$-simplex of $\Delta_{0,n}$ Continue to use the relation $\leftrightarrow$ on $\Gamma_{g, n}$ where $\G \leftrightarrow \H$ if and only if there exists $\G'$ with morphisms to both $\G$ and $\H$. The following lemma is the combinatorial manifestation of well-known results on the intersections of boundary divisors of $\overbar{\mathcal{M}}_{0, n}$ ~\cite{Keel}. 
\begin{lem}\label{M0nintersections}
	Suppose $n \geq 5$, and $A, B \subseteq I_n$ are distinct subsets satisfying $2 \leq |A|, |B| \leq n - 2$. Then $\B_A \leftrightarrow \B_B$ if and only if $A \subseteq B$, $B \subseteq A$, $A \subseteq B^c$, or $B^c \subseteq A$.
\end{lem}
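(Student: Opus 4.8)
The plan is to translate the relation $\B_A \leftrightarrow \B_B$ into a statement about the existence of a stable genus-zero graph with three vertices that contracts onto both $\B_A$ and $\B_B$, and then classify such graphs combinatorially. First I would observe that since $g = 0$, every graph in $\Gamma_{0,n}$ is a tree, so a graph $\G'$ admitting morphisms $\G' \to \B_A$ and $\G' \to \B_B$ with $A \ne B$ must have exactly three vertices and two edges: it is a path $v_1 - v_2 - v_3$, with the markings distributed among the three vertices via a partition $I_n = A_1 \sqcup A_2 \sqcup A_3$. Stability of $\G'$ forces $|A_1|, |A_3| \geq 2$ (the leaves each need valence-plus-marks at least $3$, and each leaf has valence $1$), while $|A_2| \geq 1$ (the central vertex has valence $2$).

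Next I would enumerate how such a path contracts. Contracting the edge $v_1 v_2$ produces $\B_{A_1 \cup A_2}$ (equivalently $\B_{A_3}^{c}$-style, i.e.\ the two-vertex graph splitting $I_n$ as $(A_1 \cup A_2) \sqcup A_3$); contracting the edge $v_2 v_3$ produces $\B_{A_1}$ with split $A_1 \sqcup (A_2 \cup A_3)$. So $\G'$ contracts to $\B_A$ iff $\{A, A^c\} = \{A_1, A_2 \cup A_3\}$ or $\{A, A^c\} = \{A_1 \cup A_2, A_3\}$, and similarly for $\B_B$. Running through the possible matchings of $\{A,A^c\}$ and $\{B,B^c\}$ against the two contractions of $\G'$, using $A_1 \subseteq A_1 \cup A_2$ and $A_3 \subseteq A_2 \cup A_3$, one reads off in every case one of the four nesting relations $A \subseteq B$, $B \subseteq A$, $A \subseteq B^c$, $B^c \subseteq A$. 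Conversely, given such a nesting — say $A \subsetneq B$ (the other three cases are symmetric under complementation and swapping $A,B$) — I would build the path explicitly: put $A_1 = A$, $A_2 = B \smallsetminus A$, $A_3 = B^c$, check that $2 \leq |A|$, $2 \leq |B^c| = n - |B|$ (both guaranteed by hypothesis), and $|A_2| \geq 1$ (since $A \ne B$); the resulting path is stable and contracts to both $\B_A$ and $\B_B$.

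The main obstacle — really the only place care is needed — is the converse direction's bookkeeping: one must verify that the four a priori possible equalities relating $\{A_1, A_2\cup A_3\}$, $\{A_1\cup A_2, A_3\}$ to $\{A, A^c\}$ and to $\{B, B^c\}$ collectively yield exactly the stated four nesting conditions and nothing spurious, and that the stability constraints $|A_1|, |A_3| \geq 2$ are compatible with the constraints $2 \leq |A|, |B| \leq n - 2$ in each branch. Since $n \geq 5$ there is always enough room to realize the needed partition, so no edge cases obstruct the construction. This is a finite case check rather than a genuine difficulty, so the lemma follows.
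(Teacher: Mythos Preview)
Your argument is essentially correct, and in fact the paper does not prove this lemma at all: it is stated without proof as ``the combinatorial manifestation of well-known results on the intersections of boundary divisors of $\overbar{\mathcal{M}}_{0, n}$'' with a citation to Keel. So your direct combinatorial verification is more than the paper itself provides.

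One small imprecision is worth flagging. You write that a graph $\G'$ admitting morphisms to both $\B_A$ and $\B_B$ ``must have exactly three vertices and two edges.'' This is not literally true: $\G'$ may be any stable tree, and there is no upper bound on its size. The correct statement is that one may \emph{reduce} to a $\G'$ with three vertices. Concretely, the morphism $\G' \to \B_A$ singles out a unique uncontracted edge $e_A \in E(\G')$, and similarly $e_B$ for $\B_B$; if $\{A,A^c\} \neq \{B,B^c\}$ then $e_A \neq e_B$, and contracting every edge of $\G'$ except $e_A$ and $e_B$ yields a three-vertex path that still contracts to both $\B_A$ and $\B_B$. (If $\{A,A^c\} = \{B,B^c\}$, i.e.\ $B = A^c$, then $\B_A \cong \B_B$ and the relation holds trivially, consistent with $A \subseteq B^c$.) With this reduction step made explicit, the rest of your case analysis goes through exactly as written.
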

When $n \geq 5$, we can use Lemma \ref{M0nintersections} to control the action of $\Aut(\Delta_{0,n})$ on the vertices $[\B_A]$ for $|A| = 2$, as in the following result.
\begin{lem}\label{ExtractingPermutationZero}
	Suppose $n \geq 5$, and suppose $A, B \in \binom{I_n}{2}$. Then there are unique $\Phi(A), \Phi(B) \in \binom{I_n}{2}$ such that $\Phi[\B_A] = [\B_{\Phi(A)}]$ and $\Phi[\B_B]= [\B_{\Phi(B)}]$. Moreover, we have
	\[|A \cap B| = |\Phi(A) \cap \Phi(B)|. \]
\end{lem}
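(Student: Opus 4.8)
The plan is to deduce both assertions from results already established: the uniqueness of $\Phi(A)$ and $\Phi(B)$ from Corollary~\ref{SigmaExistence}, and the equality $|A \cap B| = |\Phi(A) \cap \Phi(B)|$ from Lemma~\ref{M0nintersections} together with the fact that automorphisms of $\Delta_{0,n}$ preserve the relation $\leftrightarrow$. For uniqueness: when $n \geq 5$ and $A \subseteq I_n$ has two elements, we have $2 \leq |A| \leq n - 2$, so the triple $(0,0,A)$ is $(0,n)$-admissible, and since $\B_A = \B_A^{0,0}$ has a single edge it admits a unique edge-labelling, so $[\B_A]$ is a well-defined $0$-simplex. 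By Corollary~\ref{SigmaExistence} there is a subset $\Phi(A) \subseteq I_n$ with $|\Phi(A)| = |A| = 2$ and $\Phi[\B_A] = [\B_{\Phi(A)}]$, and this choice is unique unless $|A| = n/2$; since $n \geq 5$ forces $n/2 > 2 = |A|$, the set $\Phi(A)$ is unique, and the same applies to $\Phi(B)$.

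The combinatorial core is the following: for $n \geq 5$ and distinct two-element subsets $A, B \subseteq I_n$, one has $\B_A \leftrightarrow \B_B$ if and only if $A \cap B = \emptyset$. Indeed, among the four alternatives in Lemma~\ref{M0nintersections}, the inclusions $A \subseteq B$ and $B \subseteq A$ would force $A = B$ and are excluded; the inclusion $B^c \subseteq A$ is impossible since $|B^c| = n - 2 \geq 3 > 2 = |A|$; and $A \subseteq B^c$ is precisely the assertion $A \cap B = \emptyset$.

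To finish, recall that $\Phi \in \Aut(\Delta_{0,n})$ preserves $\leftrightarrow$ by $\sS$-equivariance, and that $\Phi$ is a bijection on $0$-simplices. Hence $A = B$ if and only if $\Phi(A) = \Phi(B)$; and for $A \neq B$ (so $\Phi(A), \Phi(B)$ are distinct two-element sets) we have $A \cap B = \emptyset$ if and only if $\B_A \leftrightarrow \B_B$, if and only if $\B_{\Phi(A)} \leftrightarrow \B_{\Phi(B)}$, if and only if $\Phi(A) \cap \Phi(B) = \emptyset$. Since $|A \cap B|$ takes values in $\{0,1,2\}$ and both the value $2$ and the value $0$ are characterized by conditions preserved by $\Phi$, the value $1$ is preserved as well, giving $|A \cap B| = |\Phi(A) \cap \Phi(B)|$. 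I do not expect a serious obstacle here; the statement essentially repackages Corollary~\ref{SigmaExistence} and Lemma~\ref{M0nintersections}, and the only point requiring care is the size bookkeeping — that $|A| = 2 \neq n/2$ when $n \geq 5$, and that for $n \geq 5$ every alternative in the $\leftrightarrow$-criterion other than disjointness is incompatible with both subsets having exactly two elements.
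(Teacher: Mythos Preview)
Your proof is correct and follows essentially the same approach as the paper: uniqueness from Corollary~\ref{SigmaExistence} together with $n \geq 5$ (so $|A| = 2 \neq n/2$), and preservation of $|A \cap B|$ from Lemma~\ref{M0nintersections} via the $\leftrightarrow$-relation, handling the three values $0,1,2$ by showing $0$ and $2$ are preserved. Your write-up is in fact slightly more explicit than the paper's in checking which alternatives of Lemma~\ref{M0nintersections} survive when $|A| = |B| = 2$.
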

\begin{proof}
That we can find unique such $\Phi(A)$ and $\Phi(B)$ follows from Corollary \ref{SigmaExistence} and the fact that $n \geq 5$. Of course if $A = B$, then $\Phi(A) = \Phi(B)$, so the lemma is true when $|A \cap B| = 2$. When $|A \cap B| < 2$, Lemma \ref{M0nintersections} implies that $\B_A \leftrightarrow \B_B$ if and only if $|A \cap B| = 0$. Since $\Phi$ preserves the relation $\leftrightarrow$, the lemma follows.
\end{proof}
We now prove the remaining cases of Theorem \ref{Image} and Theorem \ref{Main}, which occur when $g = 0$ and $n \geq 5$.
\begin{prop}\label{FinalZero}
Suppose $\Phi \in \Aut(\Delta_{0,n})$ with $n \geq 5$. Then there exists a unique $\sigma \in S_n$ such that 
\[\Phi|_{\V^{2}_{0, n}} = f_{\sigma}|_{\V^{2}_{0, n}}, \]
so $\Aut(\Delta_{0,n}) \cong S_n$ for $n \geq 5$.
\end{prop}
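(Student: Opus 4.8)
The plan is to read the permutation $\sigma$ off of the action of $\Phi$ on the $0$-simplices $[\B_A]$ with $|A| = 2$, and then to show that the twisted automorphism $\Psi \defeq f_\sigma^{-1} \circ \Phi$ fixes every vertex of $\V^{2}_{0, n}$. Since $\V^{2}_{0, n}$ is pure of dimension $0$ by Proposition \ref{PureDimensional} — its vertices being the $[\B_A]$ with $2 \leq |A| \leq n - 2$, together with the augmentation vertex $[\R^{0}_{0, n}]$, which is always fixed — this will give $\Psi|_{\V^{2}_{0, n}} = \mathrm{Id}$; and since $\rho^{2}_{0, n}$ is injective by Theorem \ref{TwoVertex} (applicable because $2g - 2 + n = n - 2 \geq 3$), it will follow that $\Phi = f_\sigma$, hence $\Aut(\Delta_{0, n}) \cong S_n$.

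First I would invoke Lemma \ref{ExtractingPermutationZero}: as $n \geq 5$, every $A \in \binom{I_n}{2}$ has a unique $\Phi(A) \in \binom{I_n}{2}$ with $\Phi[\B_A] = [\B_{\Phi(A)}]$, and the resulting bijection $A \mapsto \Phi(A)$ of $\binom{I_n}{2}$ preserves all intersection cardinalities, hence the relation $|A \cap B| = 1$; that is, it is an automorphism of the triangular graph $T(n) = L(K_n)$. For $n \geq 5$ the maximal cliques of $T(n)$ are the pencils $P_i \defeq \{\{i, j\} : j \neq i\}$, of size $n - 1$, and the triangles, of size $3$, and the former are strictly larger, so $\Phi$ permutes the pencils and thereby induces a permutation $\sigma \in S_n$ with $\Phi[\B_{\{i, j\}}] = [\B_{\{\sigma i, \sigma j\}}] = f_\sigma[\B_{\{i, j\}}]$ for all pairs (equivalently, $\Aut(T(n)) \cong S_n$ for $n \geq 5$). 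Setting $\Psi = f_\sigma^{-1} \circ \Phi$, it then remains to show that $\Psi$, which fixes every $[\B_S]$ with $|S| = 2$, in fact fixes $[\B_A]$ for every $(0, n)$-admissible $A$.

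Write $\Psi[\B_A] = [\B_{\Psi(A)}]$ with $|\Psi(A)| = |A|$, as permitted by Corollary \ref{SigmaExistence}. If $|A| \in \{2, n - 2\}$ then, using $[\B_A] = [\B_{A^c}]$, one of $A, A^c$ has cardinality $2$ and is fixed by construction of $\Psi$. If $3 \leq |A| \leq n - 3$, set $\mathcal{F}(A) \defeq \{ S \in \binom{I_n}{2} : \B_S \leftrightarrow \B_A \}$. Since $\Psi$ preserves the relation $\leftrightarrow$ by $\sS$-equivariance and fixes every $[\B_S]$ with $|S| = 2$, we have $\mathcal{F}(A) = \mathcal{F}(\Psi(A))$. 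By Lemma \ref{M0nintersections}, $\B_S \leftrightarrow \B_A$ precisely when $S \subseteq A$, $S \subseteq A^c$, $A \subseteq S$, or $A^c \subseteq S$; and since $|A|, |A^c| \geq 3 > 2 = |S|$, the last two alternatives are impossible, so $\mathcal{F}(A) = \binom{A}{2} \cup \binom{A^c}{2} = \binom{A|A^c}{2}$. Likewise $\mathcal{F}(\Psi(A)) = \binom{\Psi(A)|\Psi(A)^c}{2}$, since $\Psi(A)$ again lies in the range $[3, n - 3]$. As $\binom{A|A^c}{2}$ determines the unordered partition $\{A, A^c\}$, we conclude $\{\Psi(A), \Psi(A)^c\} = \{A, A^c\}$, i.e.\ $[\B_{\Psi(A)}] = [\B_A]$. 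Thus $\Psi$ fixes all of $\V^{2}_{0, n}$, so $\Phi|_{\V^{2}_{0, n}} = f_\sigma|_{\V^{2}_{0, n}}$; uniqueness of $\sigma$ follows from the faithfulness of the $S_n$-action on $\{[\B_S] : |S| = 2\}$ when $n \geq 5$.

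I expect the main friction to be twofold. First, producing $\sigma$ requires the input $\Aut(T(n)) \cong S_n$ for $n \geq 5$, which one either cites (Whitney) or re-derives from the classical dichotomy for pairwise-intersecting families of $2$-element sets (pencils versus triangles), the hypothesis $n \geq 5$ being exactly what makes pencils the unique maximum cliques. Second, one must keep track of the cardinality bounds carefully, so that throughout the range $3 \leq |A| \leq n - 3$ one genuinely has $2 < |A|$ and $2 < |A^c|$, which is what prevents the degenerate containment alternatives in Lemma \ref{M0nintersections} from occurring and thereby lets $\binom{A|A^c}{2}$ be recovered intact. Everything else is a formal consequence of Corollary \ref{SigmaExistence}, Lemma \ref{M0nintersections}, and $\sS$-equivariance.
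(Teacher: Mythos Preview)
Your proposal is correct and follows essentially the same approach as the paper's proof: extract $\sigma$ from the action on $\{[\B_A] : |A| = 2\}$ via the identification $\Aut(T(n)) \cong S_n$ for $n \geq 5$ (the paper phrases this as any intersection-preserving permutation of $\binom{I_n}{2}$ being induced by a unique $\sigma \in S_n$), then use Lemma \ref{M0nintersections} to recover $\binom{B|B^c}{2}$ from the relation $\leftrightarrow$ and conclude that $f_{\sigma^{-1}} \circ \Phi$ fixes every $[\B_B]$. Your explicit treatment of the cardinality constraints ruling out the degenerate containment alternatives in Lemma \ref{M0nintersections}, and your clique-theoretic justification for $\Aut(T(n)) \cong S_n$, are a bit more detailed than the paper's sketch, but the logic is the same.
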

\begin{proof}
	First we claim that when $n \geq 5$, any permutation
	\[ \Psi: \binom{I_n}{2} \to \binom{I_n}{2} \]
	satisfying $|A \cap B| = |\Psi(A) \cap \Psi(B)|$ for all $A, B \in \binom{I_n}{2}$ is uniquely induced by some $\sigma \in S_n$. The permutation $\sigma$ is extracted from $\Psi$ by setting
	\[\sigma(x) = \Psi(J_1) \cap \Psi(J_2) \]
	for any two distinct $J_1, J_2 \in \binom{I_n}{2}$ which both contain $x$. If we put $K_n$ for the complete graph on $n$ vertices, the claim is equivalent to the fact that for $n \geq 5$, any $\Psi \in \Aut(K_n) \cong S_n$ is uniquely determined by a permutation of the edges which preserves the number of vertices shared by two edges; the details are left to the reader.
	
	Therefore, Lemma \ref{ExtractingPermutationZero} implies that there exists a unique permutation $\sigma$ such that $\Phi[\B_A] = f_\sigma[\B_A]$ for all $A \subseteq I_n$ with $|A|= 2$. By complementation, it follows that $\Phi[\B_A] = f_\sigma[\B_A]$ when $|A| = n - 2$ as well. Composing $\Phi$ with $f_{\sigma^{-1}}$, it thus suffices to prove that if $\Phi$ fixes $[\B_A]$ for all $|A| = 2$, then $\Phi$ fixes all $[\B_B]$ with $3 \leq |B| \leq n - 3$. Fix such $B$, and find $\Phi(B) \subseteq I_n$ such that
	\[\Phi[\B_B] = [\B_{\Phi(B)}].\] 
	Using Lemma \ref{M0nintersections}, we see that
	\[\binom{B|B^c}{2} = \left\{A \in \binom{I_n}{2} \mid \B_A \leftrightarrow \B_B \right\} .\]
	Since $\Phi$ preserves $\leftrightarrow$ and fixes the simplices $[\B_A]$ for $|A| = 2$, it follows that 
	\[\binom{B|B^c}{2} = \binom{\Phi(B)|\Phi(B)^c}{2}, \]
	so $\Phi(B) = B$ or $\Phi(B) = B^c$. Either way, we may conclude that $\Phi[\B_B] = [\B_B]$, and the proof is finished. 
\end{proof}

\bibliographystyle{alpha}
\bibliography{deltabib}
\end{document}